\def\m{\mu}
\newcommand{\PP}{ \mathbb{P}}
\newcommand{\FF}{ \mathbb{F}}
\newcommand{\EE}{{\mathbb E}}
\newtheorem{remark}{\textbf{Remark}}[section]
\newtheorem{lemma}{\textbf{Lemma}}[section]
\newtheorem{theorem}{\textbf{Theorem}}[section]
\newtheorem{proposition}{\textbf{Proposition}}[section]
\numberwithin{equation}{section}
\newcommand\be{\begin{equation}}
\newcommand\ee{\end{equation}}
\newcommand\ba{\begin{array}}
\newcommand\ea{\end{array}}
\newcommand{\bean}{\begin{eqnarray*}}
\newcommand{\eean}{\end{eqnarray*}}
\newcommand{\NN}{\mathbb{N}}
\newcommand{\ZZ}{\mathbb{Z}}
\newcommand{\OO}{\mathcal{O}}
\def\half{\mbox{$\frac{1}{2}$}}
\def\P{\mathcal{P}}
\newcommand{\RR}{\mathbb{R}}
\def\A{\mathcal{A}}
\def\B{\mathcal{B}}
\def\C{\mathcal{C}}
\def\G{\mathcal{G}}
\def\dd{{\rm d}}
\def\eps{\varepsilon}
\def\Om{{\Omega}}
\def\F{\mathcal{F}}
\def\PP{\mathbb{P}}
\title[Discretization of some nonlinear FPK equations and applications]{On the discretization of some nonlinear Fokker-Planck-Kolmogorov  equations and applications} 
\author{
Elisabetta Carlini  \and     Francisco J. Silva   }
\thanks{  ``Sapienza'', Universit\`a di Roma, Dipartimento di Matematica Guido Castelnuovo, 00185 Rome, Italy (carlini@mat.uniroma1.it).}
\thanks{ Institut de recherche XLIM-DMI, UMR-CNRS 7252 Facult\'e des sciences et techniques 
Universit\'e de Limoges, 87060 Limoges, France (francisco.silva@unilim.fr) }
\begin{document}

\maketitle
\begin{abstract} In this work, we consider the discretization of some nonlinear Fokker-Planck-Kolmogorov equations.  The scheme we propose preserves the non-negativity of the solution, conserves the mass and, as the discretization parameters tend to zero, has limit measure-valued trajectories which are shown to solve the equation.  The main assumptions to obtain a convergence result are that the coefficients are continuous and satisfy a suitable linear growth property with respect to the space variable.  In particular,  we obtain a new proof of existence of solutions for such equations.

 We apply our results to several examples, including Mean Field Games systems and variations of the Hughes model for pedestrian dynamics. 
\end{abstract} \vspace{0.5cm}

\small {\bf AMS-Subject Classification:}  35Q84,	 65N12, 65N75.  \vspace{0.2cm}

\small {\bf Keywords:} Nonlinear Fokker-Planck-Kolmogorov equations, Numerical Analysis, Semi-Lagrangian schemes, Markov chain approximation, Mean Field Games.
\normalsize
\section{Introduction}  In this article we consider the nonlinear Fokker-Planck-Kolmogorov  (FPK) equation: 
$$\ba{rcl} \partial_{t} m  -\half \underset{1\leq i,j \leq d}\sum \partial_{x_i,x_j}^2\left(a_{i,j}(m,x,t)m\right)+\mbox{div}\left(b(m,x,t)m\right) &=& 0, \\[4pt]
													m(0)&=&\bar{m}_{0},
\ea \eqno(FPK)$$
where, denoting by $\P_1(\RR^d)$ (respectively $\P_2(\RR^d)$) the space of probability measures on $\RR^d$ with first (respectively second) bounded moments,  $\bar{m}_0 \in  \P_{2}(\RR^{d})$ and 
$$\ba{l}
b:C([0,T]; \P_{1}(\RR^d)) \times \RR^d \times [0,T]\to \RR^{d}, \\[5pt]
 a_{i,j}(m,x,t):= \sum_{k=1}^{r}  \sigma_{ik}(m,x,t) \sigma_{jk}(m,x,t) \; \;  \forall \; i, \; j=1, \hdots, d, \\[4pt]
\sigma_{i,j}: C([0,T]; \P_{1}(\RR^d))\times \RR^d \times [0,T]\to \RR, \hspace{0.2cm} \forall \; i=1, \hdots, d, \; \; j=1, \hdots, r. 
\ea
$$

Equation $(FPK)$ is understood as an equation for measures, in the sense that we seek for a solution $m$ in the space $C([0,T]; \P_{1}(\RR^d))$.  Note that the coefficients $b$ and $a_{i,j}$ depend, a priori,  on the values $m(t)\in \P_1(\RR^d)$ in the entire time interval $[0,T]$.  The notion of weak solution to this equation, as well as the assumptions we impose on the coefficients $b$ and $\sigma_{i,j}$, will be detailed in Section \ref{preliminaries}. 

Equation (FPK) has been mostly studied in the linear case, i.e. when $b(m,x,t)=b(x,t)$ and $\sigma_{i,j}(m,x,t)= \sigma_{i,j}(x,t)$ for all $i=1,\hdots, d$ and $j=1,\hdots, r$.  This is in part due to the close relation between solutions to $(FPK)$ and solutions to the standard Stochastic Differential Equation (SDE) 
\be\label{SDE_clasica}
\dd X(t)= b(X(t), t) \dd t + \sigma(X(t),t) \dd W(t), \hspace{0.3cm} X(0)= x,
\ee
where $\sigma$ is the matrix $d\times r$ matrix whose $(i,j)$ entry is $\sigma_{i,j}$,   $W$ is an $r$-dimensional Brownian motion and $x \in \RR^d$. Indeed, under some assumptions on  $b$ and $\sigma_{i,j}$, it is possible to show a correspondence of solutions to $(FPK)$ and the time marginal laws of weak solutions to \eqref{SDE_clasica} for almost every  $x\in \RR^d$ with respect to (w.r.t) $\bar{m}_0$ (see e.g. \cite{MR2450159,Figalli08,MR3443169} and the references therein). We refer the reader to  \cite{MR3443169}  for a systematic account of the theory of linear FPK equations and their probabilistic interpretation.  When $b(m,x,t)=b(m(t),x,t)$ and $\sigma_{i,j}(m,x,t)=\sigma_{i,j}(m(t),x,t)$ the associated FPK equation is often called McKean-Vlasov equation and several results exist concerning the well-posedness of the equation and its probabilistic interpretation (see e.g. \cite{MR762085,MR1431299}). In the case of general nonlinear coefficients, the article  \cite{MR2606196} provides an existence result  when $\sigma_{i,j}\equiv 0$  and in  the articles  \cite{MR3086740,MR3113428} sufficient conditions on the coefficients defining $(FPK)$ are given in order to ensure the existence of solutions in the second order case.  The uniqueness of solutions to $(FPK)$ is a difficult matter. The reader is referred to  \cite{MR2450159,Figalli08} for the analysis in the linear case with rough coefficients,  which borrow some ideas from \cite{MR1022305,MR2096794} dealing with the analogous problem when $\sigma_{i,j}=0$,  and to \cite{MR3399526,MR3442784,MR3522009} for the nonlinear case.  
%
%
%

Let us now comment on the  numerical approximation of FPK equations. One of the most popular numerical schemes in the linear case is  the one introduced by Chang and Cooper in \cite{ChangCooper70}.  An interesting feature of this finite difference scheme is that the discrete solution preserves some intrinsic properties of the analytical one such as non-negativity and conservation of the initial mass.   Starting from this article, several improvements have been obtained in subsequent works,  see for instance \cite {ZMV99,DM96},  where high order finite difference schemes have been proposed also for the nonlinear case. Let us also mention 
\cite{MR2966923} dealing with the application of this scheme in the context of stochastic optimal control problems.  
Finally, finite element approximations  have also been discussed  in \cite{SB93}. 

In the `70s, Kushner has provided a systematic procedure to discretize the solution of a SDE   by a discrete-time, discrete-state space Markov chain. The method the author proposes induces finite difference schemes for the associated Kolmogorov backward and forward equations (see e.g. \cite{MR0461683,MR0469468,KushnerDupuis}) and so a finite difference discretization of $(FPK)$ in the linear case. A proof of convergence of the scheme by using probabilistic tools (weak convergence of probability measures) is provided under the assumption that the  coefficients of the SDE are bounded and uniformly continuous.   More recently, in the context of Mean Field Games (MFGs) systems (see \cite{LasryLions07,HMC06}), Achdou and Capuzzo-Dolcetta introduced in \cite{AchdouCapuzzo10}   a semi-implicit finite difference scheme for  a linear FPK equation. The scheme is obtained  by computing the adjoint scheme of a  monotone and consistent discretization of the corresponding dual equation, i.e.  the  Kolmogorov backward equation. Finally, in the first order case $\sigma_{i,j}=0$, we   refer the reader to the recent articles \cite{delarue:hal-01273848,doi:10.1137/16M1084882} dealing with explicit upwind finite volume schemes  for the linear equation and to \cite{Lagoutiere_Vauchelet_book_chapter} for a similar scheme in the nonlinear and nonlocal case. Let us underline that all the schemes mentioned above share some of the good features of the Chang-Cooper scheme. Indeed, the approximated solutions  are non-negative and conserve the initial mass.  On the other hand, the main drawback of finite difference and finite element  schemes is that, when implemented in their explicit form, they have to satisfy a CFL condition, which implies a strong restriction on the size of the time steps.

A different class of methods in the linear case is the so-called path integration method, introduced in  \cite{Naess93}. These are explicit schemes where the marginal laws of the solution of \eqref{SDE_clasica} are approximated via an Euler-Maruyama discretization of \eqref{SDE_clasica} using Gaussian one step transition kernels.  Recently, in \cite{CJN15}, a convergence result for the discrete-time  marginal laws in  the $L^1$ strong topology is proved in the framework of a  linear and uniformly elliptic FPK equation with unbounded coefficients. 

Inspired by the papers \cite{MR3148086,CS15}, dealing with the approximation of Mean Field Games (MFGs), our aim in this article is to provide a discretization of the general $(FPK)$ and to establish some convergence results. In the linear case, the scheme we propose can be seen as a particular discrete-time, discrete-state space Markov chain approximation of    \eqref{SDE_clasica} and can be obtained as the dual scheme to the Semi-Lagrangian (SL) scheme proposed in \cite{CamFal95} for the associated linear  Kolmogorov backward equation. In this sense, our discretization is related to the one proposed by Kushner in \cite{MR0461683}, but using a different Markov chain approximation that allows us to avoid the CFL condition and hence consider large time steps. For this reason, we find that ``Semi-Lagrangian scheme'' is a good appellation for our discretization.   More importantly, our scheme naturally adapts to the general $(FPK)$ equation, preserves also the positivity, conserves the total mass  and allows us to obtain convergence results under  rather general assumptions on $b$ and $\sigma_{i,j}$.  Namely, in Theorem \ref{convergencia_1} we prove that local Lipschitzianity  and sublinear growth with respect to the space variable $x$, uniformly w.r.t. $m$ and $t$,  are sufficient conditions to prove that if the time step $h$  and space step $\rho$ tend to zero and satisfy that $\rho^2/h \to 0$, then every limit point of the approximated solutions (there exists at least one)  solves $(FPK)$. Under a suitable modification of the scheme,  a similar convergence result is obtained in Theorem \ref{convergencia_under_H} when the local Lipschitzianity property of $b$ and $\sigma_{i,j}$ is relaxed to merely continuity.   Naturally, if the $(FPK)$ equation admits a unique solution, then we get the convergence of the whole sequence of approximated solutions. As a by-product of this result, we obtain a new proof of existence of solutions to $(FPK)$. 

Note also that the initial condition $\bar{m}_0$ is rather general, we can consider for instance singular measures (e.g. Dirac masses) as initial distributions.   Moreover, as we will see in two nonlinear examples in Section \ref{aplications}, we can also construct our scheme by using suitable approximations of the coefficients $b$ and $\sigma_{i,j}$, in the case where such coefficients do not have an explicit form and have to be approximated,  and the convergence result remains valid.  

Let us point out that a different  SL scheme for the $(FPK)$ equation has been  proposed in \cite{KRS09} in the linear case. In this article, the advection part and the diffusion reaction term are approximated separately by using two fractional steps. Furthermore, in order to obtain a conservative scheme,  the Semi-Lagrangian method applied to the advection part needs to be adjusted. Since our scheme is derived directly from the probabilistic interpretation of $(FPK)$, it has the advantage that  the advection and diffusion terms  can be treated together and the conservation of the mass is automatically verified (see also the paper \cite{BR14}, where a conservative SL scheme for a parabolic equation in divergence form is studied).

We study in this work several applications of the scheme. We first consider two  linear equations. The first one deals with a FPK equation where the underlying dynamics models a damped noisy harmonic oscillator, while the second FPK equation is of first order and describes the distribution of a prey-predator system modeled by a Lotka-Volterra system including  effects of  seasonality. Even if these two examples are simple,  we have chosen them because of the following features. In the first model the exact solution admits an explicit expression, which allows us to quantify exactly the error of the approximation. In the second model, we consider a large time horizon in order to capture the asymptotic behavior of the system, which allows us to show the benefits of being able to chose large time steps. Next, we consider two nonlinear models. In the first one, we apply our scheme to a particular non-degenerate FPK arising in MFGs. The resulting approximation is similar to the one proposed in \cite{MR3148086,CS15}, the main difference being that the non-degeneracy of the system allows us to prove the convergence of the approximation in general dimensions. In the second model, we propose a variation of the Hughes model for pedestrian dynamics (see \cite{hughes2000flow}), where, differently from MFGs,  agents do not forecast the evolution of the crowd in order to choose their optimal trajectories. We prove an existence result for the associated FPK, as well as the convergence of the proposed discretization. 

The article is organized as follows. In Section \ref{preliminaries} we introduce the main notations and recall some fundamental results about the space $C([0,T]; \P_1(\RR^d))$, which are the keys to establish the convergence results. Section \ref{fully_discrete_scheme_section} presents the scheme, first in the linear case, for pedagogical reasons, and then in the general nonlinear case. In Section \ref{convergence_section} we prove our main results, concerning the convergence of the discretization. Finally, in Section \ref{aplications} we consider the application of the scheme to the models described in the previous paragraph.\smallskip\\

{\bf Acknowledgements:}  The first author acknowledges financial support by the Indam GNCS project ``Metodi numerici per equazioni iperboliche e cinetiche e applicazioni''.  The second author is partially supported by the ANR project
MFG ANR-16-CE40-0015-01 and the PEPS-INSMI Jeunes project ``Some open problems in Mean Field Games'' for the years 2016 and 2017.

Both authors acknowledge financial support by the PGMO project VarPDEMFG. 

\section{Preliminaries}\label{preliminaries}
We denote by $\P(\RR^d)$ the space of probability measures on $\RR^d$. Given a Borel measurable function $\Psi: \RR^{d} \to \RR^{d'}$ and $\mu \in \P(\RR^{d})$, we denote by $\Psi \sharp \mu \in \P(\RR^{d'})$ the probability measure defined as $\Psi \sharp \mu (A):= \mu(\Psi^{-1}(A))$ for all $A \in \B(\RR^{d'})$.  Given $p \in [1,\infty[$, the set $\P_{p}(\RR^{d})$ denotes the subset of $\P(\RR^d)$ with bounded $p$ moments, i.e. 
$$
\P_{p}(\RR^{d}):= \left\{ \mu \in \P(\RR^d) \; ; \; \int_{\RR^d}|x|^{p} \dd \mu(x) < \infty\right\}.
$$
Define  
$$
d_{p}(\mu_1, \mu_2):= \inf\left\{\left(\int_{\RR^{d} \times \RR^d}|x-y|^p \dd \gamma(x,y)\right)^{\frac{1}{p}} \; ;  \; \gamma \in \P(\RR^d\times \RR^d), \;  \pi_{1}\sharp \gamma= \mu_1, \; \; \pi_{2}\sharp \gamma= \mu_2  \right\},
$$
where $\pi_i: \RR^d \times \RR^d \to \RR^d$ ($i=1,2$) is defined as $\pi_{i}(x_1,x_2)= x_i$.  It is well known that $d_p$ is a distance in $\P_{p}(\RR^{d})$ (see e.g. \cite[Theorem 7.3]{Villani03}) and  that $\left(\P_{p}(\RR^{d}), d_p\right)$ is a separable complete metric space (see e.g. \cite[Proposition 7.1.5]{Ambrosiogiglisav}). Moreover, 
\be\label{inequality_dp_transport}
d_{p}(\mu_1, \mu_2)^p \leq \inf \left\{\int_{\RR^{d}} \left|x-T(x)\right|^p \dd \mu_1(x) \; ; \; T: \RR^d \to \RR^d \; \mbox{is Borel measurable and} \; \; T\sharp \mu_1=\mu_2\right\},\ee
with equality if $\mu_1$ has no atoms (see \cite[Theorem 2.1]{Ambrosio_lectures_2003}).  Finally, let us mention an important result that says that $d_1$ corresponds to the Kantorovic-Rubinstein metric, i.e. 
\be\label{distance_1_difference_lipschitz}
d_{1}(\mu_1, \mu_2)= \sup\left\{ \int_{\RR^d} f (x) \dd (\m_1-\mu_2)(x) \; ; \; f \in \mbox{Lip}_{1}(\RR^d)\right\},
\ee
where $ \mbox{Lip}_{1}(\RR^d)$ denotes the set of Lipschitz functions defined in $\RR^d$ with Lipschitz constant less or equal than $1$ (see e.g. \cite{Villani03}).  

Now, let $\mathcal{C} \subseteq C([0,T]; \P_1(\RR^{d}))$ and suppose that there exists a modulus of continuity $\bar{\omega}:[0,+\infty[ \to \RR$, i.e. $\bar{\omega}\geq 0$, $\bar{\omega}$ is continuous and $\bar{\omega}(0)=0$, such that
\be\label{equicontinuity_in_C_P1}
\sup_{\mu \in \C} d_{1}(\mu(t_1), \mu(t_2)) \leq \bar{\omega}(|t_1-t_2|) \hspace{0.4cm} \; \forall \; t_1, \; t_2 \in [0,T].
\ee
Assume in addition that there exists $C>0$ such that
\be\label{bounded_second_moments}
\sup_{\mu \in \C} \sup_{t\in [0,T]} \int_{\RR^{d}} |x|^2 \dd \mu(t)(x) \leq C.
\ee
Since the set $\left\{ \mu \in \P_1(\RR^d) \; ; \; \int_{\RR^{d}} |x|^2 \dd \mu(x) \leq C\right\}$ is compact in $\P_1(\RR^d)$ (see \cite[Proposition 7.1.5]{Ambrosiogiglisav}), \eqref{bounded_second_moments}, \eqref{equicontinuity_in_C_P1} and the Arzel\'a-Ascoli theorem yield the following result. 
\begin{lemma}\label{compactness_in_P_1} Under the above assumptions,  $\C$ is a relatively compact subset of $C([0,T]; \P_1(\RR^d))$. 
\end{lemma}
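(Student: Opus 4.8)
The plan is to apply the Arzel\`a--Ascoli theorem in the form valid for continuous maps from a compact metric space into a complete metric space, here from $[0,T]$ into $(\P_1(\RR^d), d_1)$. In this setting a family $\C \subseteq C([0,T]; \P_1(\RR^d))$ is relatively compact as soon as (i) it is equicontinuous and (ii) for each $t \in [0,T]$ the slice $\{\mu(t) \; ; \; \mu \in \C\}$ is relatively compact in $(\P_1(\RR^d), d_1)$. Both hypotheses are exactly what \eqref{equicontinuity_in_C_P1} and \eqref{bounded_second_moments} supply, so the argument is essentially a matter of assembling the pieces.

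For (i), the uniform modulus of continuity $\bar{\omega}$ in \eqref{equicontinuity_in_C_P1} gives, for every $\mu \in \C$ and all $t_1, t_2 \in [0,T]$, the bound $d_1(\mu(t_1), \mu(t_2)) \leq \bar{\omega}(|t_1 - t_2|)$, and since $\bar{\omega}$ is continuous with $\bar{\omega}(0) = 0$ this is precisely uniform equicontinuity of $\C$. For (ii), by \eqref{bounded_second_moments} every slice is contained in the sublevel set $K_C := \{ \nu \in \P_1(\RR^d) \; ; \; \int_{\RR^d} |x|^2 \dd\nu(x) \leq C\}$, which is compact in $(\P_1(\RR^d), d_1)$ by \cite[Proposition 7.1.5]{Ambrosiogiglisav}, as already recalled above. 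Hence each slice is relatively compact.

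To make the conclusion self-contained I would phrase it sequentially. Given any sequence $(\mu_n) \subset \C$, fix a countable dense set $\{t_k\} \subseteq [0,T]$; since each $\mu_n(t_k)$ lies in the compact set $K_C$, a diagonal extraction produces a subsequence along which $\mu_n(t_k)$ converges in $d_1$ for every $k$. A standard three-$\eps$ argument, using equicontinuity to control $d_1(\mu_n(t), \mu_n(t_k))$ uniformly in $n$ and completeness of $(\P_1(\RR^d), d_1)$ to guarantee that the pointwise limits exist, then upgrades this to uniform convergence on $[0,T]$ and shows that the limit is continuous; indeed it inherits the modulus $\bar{\omega}$ by passing to the limit in \eqref{equicontinuity_in_C_P1}. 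Thus every sequence in $\C$ admits a subsequence converging in $C([0,T]; \P_1(\RR^d))$, which is the claim.

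Since the two structural inputs---equicontinuity and the compactness of the second-moment sublevel sets---are handed to us by the assumptions and the cited result, I do not expect a genuine obstacle; the only point requiring a little care is the metric-space-valued version of Arzel\`a--Ascoli, namely checking that completeness of $(\P_1(\RR^d), d_1)$ is precisely what lets the diagonal and pointwise limits close up into a bona fide element of $C([0,T]; \P_1(\RR^d))$ rather than merely a totally bounded family.
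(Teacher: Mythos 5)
Your proof is correct and follows essentially the same route as the paper, which likewise combines the equicontinuity hypothesis \eqref{equicontinuity_in_C_P1}, the compactness in $(\P_1(\RR^d),d_1)$ of the second-moment sublevel set from \cite[Proposition 7.1.5]{Ambrosiogiglisav} via \eqref{bounded_second_moments}, and the Arzel\`a--Ascoli theorem for maps into a complete metric space. The only difference is that the paper cites Arzel\`a--Ascoli as a black box, whereas you unpack its proof (diagonal extraction plus the three-$\eps$ argument), which is a matter of presentation rather than substance.
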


For notational convenience, for $\psi= b, \; \sigma_{i,j}$ we set  $\psi[\mu](x,t):=\psi(\mu,x,t)$.    
We say that $m \in C([0,T]; \P_1(\RR^d))$ solves $(FPK)$ if for all $t\in [0,T]$ and  $\varphi \in C^{\infty}_{0}(\RR^d)$, the space of $C^{\infty}$-functions with compact support,  we have      
\be\label{solution_FP}\ba{ll}
\int_{\RR^{d}} \varphi(x) \dd m(t)(x)=& \int_{\RR^{d}}  \varphi(x) \dd \bar{m}_{0}(x) + \int_{0}^{t}\int_{\RR^{d}} \left[   b[m](x,s) \cdot \nabla \varphi(x)   \right] \dd m(s)(x) \dd s \\[4pt]
\;  & + \int_{0}^{t}\int_{\RR^{d}} \left[  \half \sum_{i,j} a_{i,j}[m](x,s) \partial_{x_i, x_j}^{2} \varphi(x)\right] \dd m(s)(x) \dd s.
\ea\ee 
The following assumption will be the principal one in the remainder of this paper. \medskip\\
{\bf(H)} We will suppose that: \smallskip\\
{\rm(i)} The maps $b$ and $\sigma$ are continuous. \smallskip\\
{\rm(ii)} There exists $C>0$ such that  
\be\label{linear_growth}
|b[\mu](x,t)| +|\sigma[\mu](x,t)| \leq C(1+ |x|) \hspace{0.2cm} \forall \; \mu\in  C([0,T];\P_{1}(\RR^{d})), \; \;  x \in \RR^d, \; \; t\in [0,T].
\ee \smallskip

The aim of this article is to study convergent numerical schemes for  solutions to $(FPK)$ (if they exists). As it can be guessed from the references \cite{MR2450159,Figalli08,MR3443169}  in the linear case, i.e. when $b$ and $\sigma_{i,j}$ do not depend on $m$, the existence of solutions to $(FPK)$ should be related with the existence of (weak) solutions to 
the  ``extended'' McKean-Vlasov equation 
\be\label{McKean_Vlasov}
\dd X(t)= b[m](X(t), t) \dd t + \sigma[m](X(t),t) \dd W(t), \hspace{0.3cm} X(0)= X_0.
\ee
In \eqref{McKean_Vlasov}, $W$ is an  $r$-dimensional  Brownian motion defined on a probability space $(\Om, \F, \PP)$,  $m$ belongs to  $C([0,T]; \P_1(\RR^d))$ and satisfies  $m(t)= \mbox{Law}(X(t))$  for all $t\in [0,T]$, where we have denoted by $\mbox{Law}(Y)$ the law induced  in $\RR^d$ by a $d$-valued random variable $Y$, and $X_0$ is a random variable, independent of $W$,  and such that $\mbox{Law}(X_0)=m_0$. 

This observation, relating formally solutions of $(FPK)$ and \eqref{McKean_Vlasov}, leads naturally to study the laws of discrete approximations of  \eqref{McKean_Vlasov}, for which existence is not difficult to show, and then to study their limit behavior. This strategy will be followed in the next sections. 

\begin{remark}
In this article we do not tackle the study of uniqueness of solutions to $(FPK)$. As it can be seen in  \cite{MR2450159,Figalli08,MR3443169}, in the linear case, the study of uniqueness is already quite complicate in the absence of first order information, w.r.t. the space variable, of $b$ and $\sigma$.  We refer the reader to {\rm \cite{MR3399526,MR3442784,MR3522009}} for some recent and interesting results in the general nonlinear case.
%
\end{remark} 
\section{The fully-discrete scheme}\label{fully_discrete_scheme_section}
In this section we describe the scheme we propose and study its main properties.  In order to introduce the main ideas we will start by considering first the $(FPK)$ equation with $\sigma=0$ and $b$ independent of $m$, i.e. the first order linear FPK  equation, also called {\it continuity equation}. Then, we will consider the stochastic case $\sigma \neq 0$ but still with coefficients $b$ and $\sigma$ independent of $m$. Finally, the scheme for the general $(FPK)$ will easily follow by freezing the $m$ dependence of $b$ and $\sigma$. We motivate the schemes by assuming  stronger assumptions on $b$ and $\sigma$, which will imply uniqueness of solutions of the underlying SDEs,  in order to take advantage of the semi-group properties of the solutions and somehow guess a consistent approximation. 
%

%


We assume first that $\sigma\equiv 0$ and that $b$ does not depend on $m$, i.e.  $b[m](x,t)=b(x,t)$. In addition to {\bf(H)}, assume that $b$ is Lipschitz w.r.t. $x$, uniformly in $t\in [0,T]$.    For any $0 \leq s \leq t \leq T$ and $x\in \RR^d$, we set  $\Phi(x,s,t)=X(t)$ where $X$ is the unique solution of 
\be\label{McKean_Vlasov_introduction_scheme}
 \dot{X}(t')= b(X(t'), t') \; \; \; \mbox{for }t' \in ]s,T[, \hspace{0.3cm} X(s)= x.
\ee
We have that $\Phi$ defines a measurable function of $(x,s,t)$ (if $ t\leq s$ we simply set $\Phi(x,s,t)=x$). Then,  $m \in C([0,T]; \P_{1}(\RR^d))$ defined as 
\be\label{push_forward_with_flow}
m(t)(A):= \Phi(\cdot,0,t) \sharp \bar{m}_{0}(A) \hspace{0.3cm} \forall \; A \in \B(\RR^{d}), \; \; \; t\in [0,T], 
\ee
is the unique solution of $(FPK)$ (see \cite{MR2135356}). We also have that for all $t\in [0,T]$ and $h \in [0,T-t]$
\be\label{semigroup_determinista}
m(t+h)(A)= \Phi(\cdot,t, t+h) \sharp m(t)(A)  \hspace{0.3cm} \forall \; A \in \B(\RR^{d}).
\ee
Given $N \in \NN$ we set $h:=T/N$ and $t_k:=kh$ ($k=0, \hdots, N$).  Let us consider the following {\it explicit} time discretization of \eqref{push_forward_with_flow}, based on a standard explicit Euler approximation of \eqref{McKean_Vlasov_introduction_scheme} and property \eqref{semigroup_determinista}
\be\label{semi_discrete_scheme}
m_{0}=\bar{m}_0, \hspace{0.2cm} m_{k+1}:= \Phi_{k}\sharp m_{k}, \hspace{0.4cm}
\mbox{where } \; \; \; \Phi_{k}(x):= x +h b( x, t_{k}) \hspace{0.4cm} \forall \; k=0, \hdots, N-1.
\ee
The sequences $m_k$ and $\Phi_k$ ($k=0,\hdots, N$) depend of course on $h$ but we have omitted this dependence in order to ease the reading.
Let us now  introduce some standard notations that will be used for the space discretization.  Let $\rho>0$ be a given {\it space step},  and consider a uniform space  grid 
$$\mathcal{G}_{\rho}:= \{x_i= i \rho \; ; \;  i\in \ZZ^d\}.$$
Given a regular lattice $\mathcal{T}_{\rho}$ of $\RR^d$, with  vertices  belonging to $\mathcal{G}_{\rho}$, we consider a   $\mathbb{Q}_1$ basis  $(\beta_{i})_{i\in \ZZ^d}$, i.e. for all $i\in \ZZ^{d}$,  $\beta_i$  is a polynomial of degree less than or equal to $1$ and satisfies that  $\beta_i(x_j)=1$ if $i=j$ and $\beta_i(x_j)=0$, otherwise. Moreover, the support $\mbox{supp}(\beta_i)$ of $\beta_i$ is compact and  
$$0\leq \beta_i \leq 1 \; \; \;  \; \forall \; i\in \ZZ^{d}, \; \; \;   \mbox{and } \hspace{0.2cm}\sum_{i\in  \ZZ^{d}} \beta_i(x)=1 \hspace{0.3cm} \forall \; x\in \RR^d.$$
We look for a discretization of \eqref{semi_discrete_scheme} taking the form 
\be\label{discretization_dirac_masses}
m_{k}= \sum_{i \in \ZZ^{d}} m_{i,k} \delta_{x_{i}} \hspace{0.4cm} \forall \; k=0, \hdots, N-1. 
\ee
For all $i \in \ZZ^{d}$, let us define 
$$
E_{i}:= \left\{ x \in \RR^{d} \, ; \; |x-x_{i}|_{\infty} \leq \frac{\rho}{2}\right\}.
$$
In Section \ref{convergence_section} we will let $\rho \downarrow 0$, thus, without loss of generality, we can assume that $\bar{m}_0(\partial E_i)= 0$ for all $i \in \ZZ^d$. 
We define the weights $m_{i,k}$ of the Dirac masses  in \eqref{discretization_dirac_masses}   inductively as
\be\label{scheme_linear_case}
m_{i,0}=  \bar{m}_{0}(E_i),  \hspace{0.3cm}
m_{i, k+1} = \sum_{j \in \ZZ^{d}} \beta_{i} ( \Phi_{j,k})m_{j,k} \hspace{0.3cm} \forall \; k=0, \hdots, N-1, \; \; i \in \ZZ^{d},
\ee
where 
\be\label{discrete_trajectories_deterministic_case}
\Phi_{i,k}:= \Phi_k(x_i)= x_{i}+ h  b(x_{i}, t_{k}) \hspace{0.3cm} \forall \; i \in \ZZ^{d}.
\ee
The sequences of weights in \eqref{scheme_linear_case} depends on $(\rho,h)$, but, for notational convenience, we have omitted this dependence.  
\begin{remark}
{\rm(i)}
In order to understand the intuitive meaning of \eqref{scheme_linear_case}, take $d=1$, $\rho=1$ and $\beta_{i}(x):=  \max \{1 -|x-x_i|, 0\} $ for all $i\in \ZZ$, $x\in \RR$. Then, the mass $m_{i,k+1}$, at $x_i$ at time $t_{k+1}$, is obtained by first considering the set $\A_{i,k}$ of $j$'s such that $\Phi_{j,k} \in \mbox{{\rm supp}}(\beta_{i})$ and then adding the masses $m_{j,k}$ {\rm(}$j\in \A_{i,k}${\rm)} weighted by $1-  |\Phi_{j,k}-x_i|$.  For instance, if $ \Phi_{j,k}=x_{i}+1/2$ then, at the discrete time $k+1$,  half of the mass $m_{j,k}$ will be in $x_{i}$ and the other half will be in $x_{i+1}$.\smallskip\\
{\rm(ii)} In this deterministic setting if $d=1$ it is easy to check that \eqref{scheme_linear_case} coincides with the scheme proposed in \cite{MR2771664}.
\end{remark}
Now, if   $\sigma[m](x,t)=\sigma(x,t)$ is not identically zero we can consider the same type of scheme, taking into account that the characteristics curves are stochastic. Indeed, consider a filtered probability space $(\Om, \F, \FF ,\PP)$, an $r$-dimensional Brownian motion $W$ defined in this probability space and adapted to the filtration $\FF:= \{\F_{t}\}_{t\in [0,T]}$. Define $\Phi: \Omega \times \RR^{d} \times [0,T] \times [0,T] \to \RR^d $ as $\Phi(\omega,x,s,t)=x$ if $t\leq s$ and, for $s<t$, $\Phi(\omega,x,s,t)=X(t,\omega)$, where $X$ solves
\be\label{McKean_Vlasov_introduction_scheme_stochastic}
 \dd X(t')= b(X(t'), t')\dd t'+ \sigma(X(t'),t') \dd W(t') \hspace{0.3cm} \mbox{for } \, t' \in ]s,T[, \; \; \,  X(s)= x.
\ee
Then, assuming that $b$ and $\sigma$ are Lipschitz with respect to   $x$, uniformly in $t\in [0,T]$, we have that (see e.g. \cite{Figalli08})
\be\label{push_forward_with_flow_stochastic_in_mean}
m(t)(A):= \int_{\Om} \Phi(\omega, \cdot, 0,t) \sharp \bar{m}_{0}(A)\dd \PP(\omega)= \EE\left(\Phi(\cdot,0,t) \sharp \bar{m}_{0}(A) \right)\hspace{0.3cm} \forall \; A \in \B(\RR^{d}), \; \; \; t\in [0,T],
\ee
where, as usual, we have omitted the dependence of $\Phi$ on $\omega$ inside the expectation.  Analogously to \eqref{semigroup_determinista}, we have that 
\be\label{semigroup_estocastico}
m(t+h)(A)= \int_{\Om} \Phi(\omega, \cdot, t,t+h) \sharp m(t)(A)\dd \PP(\omega)=\EE\left(\Phi(\cdot,t, t+h) \sharp m(t)(A)\right)  \hspace{0.3cm} \forall \; A \in  \B(\RR^{d}).
\ee
Therefore, if we discretize the Brownian motion $W$ by an $r$-dimensional random walk with $N$ time steps,  the stochastic characteristic 
$$
X(t+h)= X(t)+ \int_{t}^{t+h} b(X(t'),t') \dd t' + \int_{t}^{t+h}\sigma(X(t'),t') \dd W(t'),
$$
can be approximated with an explicit Euler scheme by
\be\label{xtplush}
X(t+h)= X(t)+ h b(X(t),t)+ \sqrt{rh} \sigma(X(t),t)Z, 
\ee
where $Z$ is an $r$-valued random variable, independent of $X(t)$,  satisfying that for all $\ell=1, \hdots, r$, 
\be\label{probabilities_random_walk}
\PP( \{ Z^{\ell}=1)= \PP(Z^{\ell}=-1)= \frac{1}{2d} \hspace{0.5cm} \mbox{and } \hspace{0.5cm} \PP\left( \bigcup_{1 \leq \ell_1 < \ell_2 \leq r} \{ Z^{\ell_1} \neq 0 \} \cap  \{ Z^{\ell_2} \neq 0 \} \right)=0. 
\ee
Relations \eqref{xtplush}-\eqref{probabilities_random_walk} motivate the following extensions of $\Phi_{i,k}$, defined in  \eqref{discrete_trajectories_deterministic_case}, 
\be\label{discrete_trajectories_stochastic_case}
\ba{rcl}
\Phi_{i,k}^{\ell,+} &:=& x_{i} + h b(x_{i},t_{k})+ \sqrt{r h }\sigma_{\ell}(x_{i},t_{k}) \; \; \; \forall \; i\in \ZZ^{d}, \; \; k=0, \hdots, N-1, \; \; \ell=1,\hdots,r, \\[4pt]
\Phi_{i,k}^{\ell,-} &: =& x_{i} + h b(x_{i},t_{k})-\sqrt{r h }\sigma_{\ell}(x_{i},t_{k}) \; \; \; \forall \; i\in \ZZ^{d}, \; \; k=0, \hdots, N-1, \; \; \ell=1,\hdots,r.
\ea
\ee
Inspired by \eqref{scheme_linear_case}, relation \eqref{semigroup_estocastico} induces the following {\it explicit} scheme
\be\label{scheme_linear_stochastic_case}\ba{rcl}
m_{i,0}&:=& \bar{m}_{0}(E_i) \hspace{0.4cm} \forall \; i\in \ZZ^d,  \\[6pt]
m_{i, k+1}&:=& \frac{1}{2r}\sum\limits_{\ell=1}^{r}\sum\limits_{j \in \ZZ^{d}} \left[\beta_{i} ( \Phi_{j,k}^{\ell,+})+\beta_{i}(\Phi_{j,k}^{\ell,-})\right] m_{j,k}   \hspace{0.4cm} \forall \; i\in \ZZ^d, \; \; k=0,\hdots, N-1. 
\ea \ee
\begin{remark}\label{conservation_property} Note that the previous scheme is conservative. Indeed, for all $k=0,\hdots, N-1$, 
$$
\sum_{i\in \ZZ^d} m_{i,k+1} =\sum\limits_{j \in \ZZ^{d}} \frac{m_{j,k}}{2r}\sum\limits_{\ell=1}^{r} \sum_{i\in \ZZ^d}  \left[\beta_{i} ( \Phi_{j,k}^{\ell,+})+\beta_{i}(\Phi_{j,k}^{\ell,-})\right] =\sum_{i\in \ZZ^d} m_{i,k},
$$
and so $\sum_{i\in \ZZ^d} m_{i,k+1}=\sum_{i\in \ZZ^d} m_{i,0}=1$. 
\end{remark}
{\it Markov chain interpretation:} Note that \eqref{scheme_linear_stochastic_case} can be interpreted in terms of a  discrete-time and countably-state space Markov chain. Indeed, given  the initial law $m_{\cdot,0}$ on $\G_{\rho}$, consider the non-homogeneous Markov chain $\{X_{k} \; ; \; k=0, \hdots, N\}$ with values in $\G_{\rho}$ defined by the previous initial law and the transition probabilities
$$
p_{ji}^{(k)}:= \PP\left( X_{k+1}=x_{i} \; \big| \; X_{k}=x_{j}\right):= \frac{1}{2r}\sum\limits_{\ell=1}^{r}\left[\beta_{i} ( \Phi_{j,k}^{\ell,+})+\beta_{i}(\Phi_{j,k}^{\ell,-})\right] \hspace{0.3cm} \; \forall \; i, j \in \ZZ^{d}, \; \; k=0,\hdots,N-1.
$$
Then, \eqref{scheme_linear_stochastic_case} gives the distribution of $X_k$ for all $k=0, \hdots, N$. 
\begin{remark} {\rm(i)} Note that if $\sigma\equiv 0$, we recover the scheme \eqref{scheme_linear_case}. \smallskip\\
{\rm(ii)} As we will see in Section \ref{convergence_section}, the Markov chain $(X_k)_{k=0}^N$ is a consistent approximation, in the sense of Kushner {\rm(}see \cite{MR0469468}{\rm)}, of the diffusion in \eqref{McKean_Vlasov_introduction_scheme_stochastic} with $s=0$ and with $\mbox{{\rm Law}}(X_0)= \bar{m}_0$. It is easily seen that, as a function of $\bar{m}_0$,  scheme \eqref{scheme_linear_stochastic_case} can be formally understood as  the dual scheme associated to the Semi-Lagrangian scheme {\rm(}see \cite{MR1906636}{\rm)} for the Kolmogorov backward equation 
$$\ba{rcl} \partial_{t} u  -\half \underset{1\leq i,j \leq d}\sum  a_{i,j} \partial_{x_i,x_j}^2u+b\cdot \nabla u&=& 0, \\[4pt]
													u(\cdot, T)&=&g(\cdot),
\ea $$
as a function of $g \in C_{b}(\RR^{d})$ {\rm(}where $C_{b}(\RR^{d})$ is the space of bounded continuous functions in $\RR^d${\rm)}.  \smallskip\\
{\rm(iii)} In  \cite[Section 3.1]{carlini2016PREPRINT}, it is shown that scheme \eqref{scheme_linear_stochastic_case} can also be constructed  from the weak formulation of $(FPK)$ {\rm(}when $b$ and $\sigma$ are independent of $m${\rm)}. 
\end{remark}

In the general non-linear case, as we have explained at the end of Section \ref{preliminaries},  formally, $m$ solves $(FPK)$ iff  for all $t\in [0,T]$, we have that $m(t)= \mbox{Law}(X(t))$, where $X$ solves \eqref{McKean_Vlasov} (assuming that \eqref{McKean_Vlasov} admits a solution in a weak  sense). On the other hand, even in the particular case of regular coefficients and  local in time dependence on $m$, i.e. $b[m](x,t)=b(m(t),x,t)$ and 
$\sigma[m](x,t)=\sigma(m(t),x,t)$, with $b$ and $\sigma$ regular w.r.t. $x$,  we have that $X$  is not a Markov process. Nevertheless, loosely speaking again, $X$  solves \eqref{McKean_Vlasov} iff $\mbox{{\rm Law}}(X(\cdot)) \in C([0,T]; \P_1(\RR^{d}))$ is a fixed point of the application 
\be\label{fixed_point_function}
\mu \in C([0,T]; \P_1(\RR^{d})) \mapsto \F(\mu):=\mbox{{\rm Law}}(X[\mu](\cdot)) \in C([0,T]; \P_1(\RR^{d})), 
\ee
where $X[\mu](\cdot)$ solves
\be\label{McKean_Vlasov_introduction_scheme_stochastic_nonlinear_fixed_point_sde}
 \dd X(t)= b[\mu](X(t), t)\dd t + \sigma[\mu](X(t),t) \dd W(t) \; \; \mbox{for } t\in ]0,T[, \hspace{0.3cm} X(0)= X_0.
\ee
Since for every fixed $\mu$, $X[\mu]$ defines a  Markov diffusion, we can apply \eqref{scheme_linear_stochastic_case} to approximate its law. 

Even if the previous discussion is purely formal,  it provides the idea to construct a natural discretization of $(FPK)$ by considering a discrete version of the fixed-point problem \eqref{fixed_point_function}, which will be constructed using  \eqref{scheme_linear_stochastic_case}. However, since $b[\cdot](x,t)$ and $\sigma[\cdot](x,t)$ act on $C([0,T]; \P_1(\RR^{d}))$, given $\rho$ and $h$ we first need to extend elements on \small
$$\mathcal{S}^{\rho,h}:= \left\{ (\mu_{i,k})_{i\in \ZZ^d, \; k=0,\hdots, N} \; ; \; \mu_{i,k} \geq 0,  \; \; \forall \; i\in \ZZ^d,   \; \; \sum_{i \in \ZZ^d} \mu_{i,k} =1, \; \;  \sum_{i \in \ZZ^d} |x_i| \mu_{i,k}<\infty, \; \;  \mbox{for all } k=0, \hdots, N\right\},$$ \normalsize
to elements in $C([0,T]; \P_1(\RR^{d}))$. This can be naturally done  by using time interpolation. Given  $\mu \in \mathcal{S}^{\rho,h}$, we still denote by  $\mu$ the element of $C([0,T]; \P_{1}(\RR^{d}))$ defined by
\be\label{time_interpolation}
\mu(t):= \left(\frac{t-t_{k}}{h}\right)\sum_{i\in \ZZ^{d}} \mu_{i,k+1} \delta_{x_{i}} +  \left(\frac{t_{k+1}-t}{h}\right)\sum_{i \in \ZZ^d} \mu_{i,k}\delta_{x_{i}} \hspace{0.4cm} \mbox{if $t\in [t_{k}, t_{k+1}[$,}
\ee
for all $k=0, \hdots, N-1$. Using this notation, define 
\be\label{fixed_point_function_discrete}
\mu \in \mathcal{S}^{\rho,h} \mapsto \F^{\rho,h}(\mu):=\left(\mbox{{\rm Law}}(X_k[\mu])\right)_{k=0,\hdots, N} \in \mathcal{S}^{\rho,h}, 
\ee
where  we compute $\PP(X_{k}[\mu]=x_i):=m_{i,k}[\mu]$ recursively with   \eqref{scheme_linear_stochastic_case} with $\Phi_{j,k}^{\ell,+}$ and $\Phi_{j,k}^{\ell,-}$ replaced by 
$$
\Phi_{i,k}^{\ell,+}[\mu]  :=  x_{i} + h b[\mu](x_{i},t_{k})+ \sqrt{r h }\sigma_{\ell}[\mu](x_{i},t_{k}), \; \; \; 
\Phi_{i,k}^{\ell,-}[\mu]  : =  x_{i} + h b[\mu](x_{i},t_{k})-\sqrt{r h }\sigma_{\ell}[\mu](x_{i},t_{k}),
$$
respectively. For $\mu \in \mathcal{S}^{\rho,h}$ let us set $\nu_k[\mu]:=\left(\F^{\rho,h}(\mu)\right)_k$ ($k=0,\hdots,N$). By definition of the scheme, using that $\bar{m}_0 \in \P_2(\RR^d)$ and that $\bar{m}_0(\partial E_i)= 0$, we have 
$$
\int_{\RR^d} |x|^2 \dd \nu_0[\mu](x)= \sum_{i \in \ZZ^d} |x_i|^2 \bar{m}_{0}(E_i)= \sum_{i \in \ZZ^d} \int_{E_i} |x - (x-x_i)|^2 \dd \bar{m}_0(x)\leq 2 \int_{\RR^d} |x|^2 \dd \bar{m}_0(x) + d \rho^2/2 <+\infty.
$$
Moreover, arguing exactly as in the proof of Proposition \ref{second_moments_bounded} in the next section, under ${\bf(H)}{\rm(ii)}$ we obtain the existence of $c>0$, independent of $\mu$, such that 
\be\label{uniform_bound_second_moments_Sh}
\int_{\RR^{d}} |x|^{2} \dd \nu_{k}[\mu](x)= \sum_{i\in \ZZ^{d}} |x_i|^{2} \nu_{i,k} [\mu]\leq c \hspace{0.3cm} \forall \; k=0,\hdots, N, \; \; \; \forall \; \mu \in \SS^{\rho,h}.
\ee
 In particular, $\F^{\rho,h}$ is well-defined.  The discretization of $(FPK)$ we propose is
\be\label{scheme_nonlinear_stochastic_case_I}
\mbox{find $m  \in \SS^{\rho,h}$ such that} \hspace{0.2cm} m=\F^{\rho,h}(m),
\ee
or equivalently,  find $m\in \SS^{\rho,h}$ such that
\be\label{scheme_nonlinear_stochastic_case_II}\ba{l}
m_{i,0} = \bar{m}_{0} (E_{i})  \hspace{0.4cm} \forall \; i\in \ZZ^d,  \\[6pt]
m_{i, k+1} = \frac{1}{2r}\sum\limits_{\ell=1}^{r}\sum\limits_{j \in \ZZ^{d}} \left[\beta_{i} ( \Phi_{j,k}^{\ell,+}[m])+\beta_{i}(\Phi_{j,k}^{\ell,-}[m])\right] m_{j,k} \hspace{0.4cm} \forall \; i\in \ZZ^d, \; \; k=0,\hdots, N-1. 
\ea \ee 
Now, let us prove the existence of solutions of \eqref{scheme_nonlinear_stochastic_case_I}. In the following proof we identify $\SS^{\rho,h}$ with a subset of $\P_1(\RR^{d})^{N+1}$ by letting for all $\mu \in \SS^{\rho,h}$
\be\label{identification_Sh_P}
\mu_{k}= \sum_{i\in  \ZZ^{d}} \mu_{i,k} \delta_{x_i} \hspace{0.3cm}  \forall \; k=0,\hdots, N.
\ee

\begin{proposition}\label{existence_by_fixed_point} There exists at least one solution $m^{\rho,h} \in \mathcal{S}^{\rho,h}$ of \eqref{scheme_nonlinear_stochastic_case_I}. 
\end{proposition}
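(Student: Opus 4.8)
The plan is to realize $m^{\rho,h}$ as a fixed point of $\F^{\rho,h}$ through the Schauder fixed-point theorem, using the uniform second-moment bound \eqref{uniform_bound_second_moments_Sh} to manufacture a convex compact set that is left invariant by the map. Concretely, with $c>0$ the constant appearing in \eqref{uniform_bound_second_moments_Sh}, I would set
$$
K:= \left\{ \mu \in \mathcal{S}^{\rho,h} \; ; \; \sum_{i\in \ZZ^d} |x_i|^2\, \mu_{i,k} \leq c \; \; \forall \; k=0,\hdots,N\right\},
$$
and identify $K$ with a subset of $\P_1(\RR^d)^{N+1}$ via \eqref{identification_Sh_P}, the latter being endowed with the product of the distances $d_1$. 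The estimate \eqref{uniform_bound_second_moments_Sh} says precisely that $\F^{\rho,h}(\mathcal{S}^{\rho,h})\subseteq K$, and since $K\subseteq \mathcal{S}^{\rho,h}$ this gives the invariance $\F^{\rho,h}(K)\subseteq K$; hence it suffices to find a fixed point of the restriction $\F^{\rho,h}:K\to K$.

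Next I would verify that $K$ is convex and compact. Convexity is immediate: $K$ is defined by the affine constraints $\mu_{i,k}\geq 0$, $\sum_i\mu_{i,k}=1$ and the linear inequality $\sum_i|x_i|^2\mu_{i,k}\leq c$ (linear in $\mu$). For compactness, each factor of an element of $K$ lies in $\{\nu\in\P_1(\RR^d)\,;\,\int_{\RR^d}|x|^2\dd\nu\leq c\}$, which is compact in $(\P_1(\RR^d),d_1)$ by \cite[Proposition 7.1.5]{Ambrosiogiglisav} (the same fact used for Lemma \ref{compactness_in_P_1}); moreover the constraints cutting out $K$ (support on $\G_\rho$, unit mass, and the second-moment bound, the last being $d_1$-lower semicontinuous) are preserved under $d_1$-limits. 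Thus $K$ is a closed subset of a finite product of compact sets, hence compact. Through the Kantorovich--Rubinstein representation \eqref{distance_1_difference_lipschitz}, the ambient space $\P_1(\RR^d)^{N+1}$ sits isometrically inside a normed space of signed measures, so the classical form of Schauder's theorem will apply once continuity is established.

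The main work, and the point I expect to be the chief obstacle, is the continuity of $\F^{\rho,h}$ on $K$ for the product-$d_1$ topology, which I would prove by induction along the recursion defining $\nu_k[\mu]$. The time-interpolation map \eqref{time_interpolation}, sending $\mu\in\mathcal{S}^{\rho,h}$ to an element of $C([0,T];\P_1(\RR^d))$, is a convex combination and is therefore $d_1$-Lipschitz (hence continuous) for the product topology; combined with the continuity of $b$ and $\sigma$ from ${\bf(H)}{\rm(i)}$, this makes each $\Phi_{j,k}^{\ell,\pm}[\mu]$ depend continuously on $\mu$. Writing one step of the scheme at the level of measures as
$$
\nu_{k+1}[\mu] = \frac{1}{2r}\sum_{\ell=1}^r\sum_{j\in\ZZ^d} m_{j,k}[\mu]\Big(P\big[\Phi_{j,k}^{\ell,+}[\mu]\big]+P\big[\Phi_{j,k}^{\ell,-}[\mu]\big]\Big),\qquad P[y]:=\sum_{i\in\ZZ^d}\beta_i(y)\,\delta_{x_i},
$$
I would note that $P:\RR^d\to\P_1(\RR^d)$ is (locally Lipschitz) continuous, since the $\beta_i$ form a continuous partition of unity with supports near the grid, so every finite truncation in $j$ is continuous in $\mu$. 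The delicate issue is passing to the limit in the infinite sum over $j$: here the uniform second-moment bound available on $K$ yields uniform tightness of $\{\nu_k[\mu]\}_{\mu\in K}$, so the tails of the sum are uniformly $d_1$-small and $\nu_{k+1}[\mu]$ is a locally uniform limit of its continuous truncations, hence continuous. Starting from the constant map $\nu_0[\mu]=m_{\cdot,0}$ and iterating for $k=0,\hdots,N-1$ gives the continuity of $\F^{\rho,h}$ on $K$.

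With $\F^{\rho,h}:K\to K$ a continuous self-map of the convex compact set $K$, the Schauder fixed-point theorem provides $m^{\rho,h}\in K\subseteq\mathcal{S}^{\rho,h}$ with $m^{\rho,h}=\F^{\rho,h}(m^{\rho,h})$, which is exactly a solution of \eqref{scheme_nonlinear_stochastic_case_I}, as claimed.
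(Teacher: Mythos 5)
Your proposal is correct and takes essentially the same route as the paper: Schauder's fixed-point theorem applied to $\F^{\rho,h}$ on the convex set $\mathcal{S}^{\rho,h}_{c}$ cut out by the uniform second-moment bound \eqref{uniform_bound_second_moments_Sh}, with invariance from that same bound, compactness via \cite[Proposition 7.1.5]{Ambrosiogiglisav} under the identification \eqref{identification_Sh_P}, and continuity of $\F^{\rho,h}$ deduced from {\bf(H)}{\rm(i)}. The only difference is one of detail: where the paper simply asserts that convergence of the points $\Phi_{j,k}^{\ell,\pm}[\mu_n]$ yields continuity of $\F^{\rho,h}$, you spell out the inductive step and the uniform-tightness control of the tail sums, which is a legitimate filling-in of the same argument rather than a different approach.
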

\begin{proof}  As before, for $\mu \in \SS^{\rho,h}$ denote by  $\nu_k[\mu]:=\left(\F^{\rho,h}(\mu)\right)_k$ ($k=0,\hdots,N$). Let $c>0$ be such that \eqref{uniform_bound_second_moments_Sh} holds. 
%
Then, defining 
$$\SS^{\rho,h}_{c}:= \left\{ \mu \in \SS^{\rho,h} \; ; \; \sum_{i\in \ZZ^{d}} |x_{i}|^{2} \mu_{i,k}\leq c, \hspace{0.4cm} \; \; \forall \; k=0,\hdots, N\right\},$$
we have that $\SS^{\rho,h}_{c}$ is convex and $\F^{\rho,h}(\SS^{\rho,h}_{c})\subseteq \SS^{\rho,h}_{c}$. Moreover, by  \cite[Proposition 7.1.5 and Proposition 5.1.8]{Ambrosiogiglisav}, Fatou's Lemma and the identification \eqref{identification_Sh_P}, we have that  $\SS^{\rho,h}_{c}$ is a compact subset of  $\P_1(\RR^{d})^{N+1}$. Finally,   if $\mu_n \in  \SS^{\rho,h}$ converge to $\mu \in  \SS^{\rho,h}$, seen as elements of $\P_1(\RR^{d})^{N+1}$,  then, using the extension \eqref{time_interpolation}, assumption {\bf(H)}{\rm(i)} implies that $ \Phi_{j,k}^{\ell,+}[\mu_n]$ and $ \Phi_{j,k}^{\ell,-}[\mu_n]$ converge to  $ \Phi_{j,k}^{\ell,+}[\mu]$ and $ \Phi_{j,k}^{\ell,-}[\mu]$, respectively, which implies the continuity of  $\F^{\rho,h}$. Since the topology of $\P_1(\RR^d)$  is the restriction to $\P_1(\RR^d)$ of the topology induced by the modified Kantorovic-Rubinstein norm on the linear  space of all bounded Borel measures on $\RR^d$ with respect to which all the Lipschitz functions are integrable (see the discussion before Proposition 1.1.4 in \cite{MR3058744}),  the existence of a solution of \eqref{scheme_nonlinear_stochastic_case_I} follows from  Schauder's fixed point theorem.  
\end{proof}
The computation in Remark \ref{conservation_property} applies  in the nonlinear case and so the scheme is conservative. 
\begin{remark}\label{explication_explicito_implicito}{\rm[Explicit and implicit schemes]} 
Note that if for all $t\in [0,T]$,  $b[m](x,t)=\hat{b}(m(\cdot \wedge t), x,t)$ and $\sigma[m](x,t)=\hat{\sigma }(m(\cdot \wedge t), x,t)$ for some functions $\hat{b}$ and $\hat{\sigma}$ defined in $C([0,T]; \P_{1}(\RR^{d}))\times \RR^{d} \times [0,T]$, then the scheme \eqref{scheme_nonlinear_stochastic_case_II} is explicit in the time steps and the existence of solution, as well as the uniqueness, of the scheme is straightforward. In the general case, the scheme is implicit in the time steps and, as we have seen in the proof of the previous proposition, the existence of solutions is a consequence of Shauder fixed point theorem. The latter situation is the one we face when we consider MFGs, as we will see in Section \ref{subsection_mfg}. In the implicit cases, the uniqueness of solutions is generally not true and its fulfilment depends on the problem at hand.   
\end{remark}

\section{Convergence analysis}\label{convergence_section}
In this section we prove our main results concerning the convergence of solutions to \eqref{scheme_nonlinear_stochastic_case_II} to solutions to $(FPK)$. In our first main result in Theorem \ref{convergencia_1}, we prove the desired convergence result under an additional local Lipschitz assumption on $b$ and $\sigma$, with respect to the space variable, and  suitable conditions on the time and space steps. In Theorem \ref{convergencia_under_H}, we consider a variation of the scheme in Section \ref{fully_discrete_scheme_section}, with regularized coefficients, and we prove a similar convergence result by assuming only   {\bf(H)} and some conditions on the discretization parameters.

Let us first introduce and recall some classical properties of the linear interpolation operator we consider (see e.g. \cite{Ciarlet,quartesaccosaleri07} for further details).  Let $B(\mathcal{G}_{\rho})$ the space of bounded functions on $\mathcal{G}_{\rho}$ and for $f\in B(\G_\rho)$ set $ f_{i}:= f(x_{i})$. We consider the following  linear interpolation operator 
\be\label{definterpolation}
I[f](\cdot):=\sum_{i\in  \ZZ^{d}} f_i\beta_i(\cdot) \hspace{0.3cm} \mbox{for } f \in B(\mathcal{G}_{\rho}).
\ee
Given $\phi \in C_{b}(\RR^{d})$, let us define $\hat{\phi} \in B(\mathcal{G}_{\rho})$ by $\hat{\phi}_i:=\phi(x_i)$ for all $i \in \ZZ^{d}$. 
 Suppose that $\phi: \RR^{d} \to \RR$ is Lipschitz with constant $L$. Then, 
\be\label{lipchitzianidad} I[\hat{\phi}] \; \;\; \mbox{is Lipschitz with constant $\sqrt{d} L \; $ and } \; \sup_{x\in \RR^{d}}| I[\hat{\phi}](x)-\phi(x)|=c_0 \rho,\ee 
for some $c_0>0$.  On the other hand,
if $\phi \in \C^{2}(\RR^{d})$, with bounded second derivatives, then there exists $c_1>0$ such that 
\begin{equation}\label{www2}
\sup_{x\in \RR^{d}}| I[\hat{\phi}](x)-\phi(x)|=c_1 \rho^2.
\end{equation}
Now, let $\{N_{n}\}_{n\in \NN}$ be a sequence in $\NN$ such that $N_n \to \infty$ as $n\to \infty$ and set $h_n:= T/N_n$. Given a sequence of space steps $\rho_n$, such that $\rho_n \to 0$ as $n\to \infty$, we want to study the limit behavior of  the extensions to $C([0,T];\P_1(\RR^{d}))$, defined in \eqref{time_interpolation}, of sequences of solutions   $m^{n}:=m^{\rho_n,h_n}\in \SS^{\rho_n,h_n}$ of   \eqref{scheme_nonlinear_stochastic_case_I}, with $\rho=\rho_n$ and $h=h_{n}$ (by Proposition \ref{existence_by_fixed_point}  we now that    \eqref{scheme_nonlinear_stochastic_case_I}  admits at least one solution). 

First note that by considering the transport plan $T(x)=x_i$ if $x\in E_i$, and arbitrarily defined in $\partial E_i$ (because $\bar{m}_0(\partial E_i)= 0$),  we have that $T\sharp \bar{m}_0=m^n(0)$. Thus,   inequality \eqref{inequality_dp_transport} with $p=1$ yields
\be\label{approximation_initial_condition}
d_{1}\left(\bar{m}_0, m^{n}(0)\right) \leq \int_{\RR^{d}}| x-T(x)|\dd \bar{m}_0(x) =\sum_{i\in \ZZ^d}\int_{E_i}|x-x_i| \dd \bar{m}_0(x) \leq \sqrt{d} \rho_n/2,
\ee
which implies that $m^n(0) \to \bar{m}_0$ in $\P_1(\RR^d)$ as $n\to \infty$.  We  prove in this section  that under suitable conditions over $\rho_n$ and $h_n$  the set $\C:=\{ m^n \; ; \; n\in \NN\}$
satisfies \eqref{equicontinuity_in_C_P1} and \eqref{bounded_second_moments}. Therefore,   Lemma \ref{compactness_in_P_1} will imply that $m^n$ has at least one limit point $m \in C([0,T]; \P_1(\RR^{d}))$.   In the proof of  \eqref{equicontinuity_in_C_P1} and \eqref{bounded_second_moments}  we will need some properties of the Markov chain $X^{n}$,  defined by the transition probabilities 
$$
p_{ji}^{n,k}:= \PP\left( X_{k+1}^{n}=x_{i} \; \big| \; X_{k}^{n}=x_{j}\right):= \frac{1}{2r}\sum\limits_{\ell=1}^{r}\left[\beta_{i} ( \Phi_{j,k}^{\ell,+}[m^n])+\beta_{i}(\Phi_{j,k}^{\ell,-}[m^n])\right] \hspace{0.3cm} \; \forall \; i, j \in \ZZ^{d}, 
$$
and $ k=0,\hdots,N-1$.  Note that \eqref{scheme_nonlinear_stochastic_case_II} implies that the mariginal distributions of this chain are given by $m^n$. 
%
Moreover, it is easy to check  that \eqref{lipchitzianidad} (resp. \eqref{www2}) implies that if $\phi: \RR^{d} \to \RR$ is Lipschitz (reps. $C^2$ with bounded second derivatives), then 
\be\label{est.esp}\ba{l} \EE\left( \phi(X_{k+1}^n) \big| X_k^n=x_i\right)= \frac{1}{2r}\sum_{\ell=1}^{r}\left[ I[\hat{\phi}](\Phi^{\ell,+}_{i,k}[m^n])+I[\hat{\phi}](\Phi^{\ell,-}_{i,k}[m^n])\right]  \\[6pt]
\hspace{3.4cm}=\frac{1}{2r} \sum_{\ell=1}^{r}\left[ \phi( \Phi^{\ell,+}_{i,k}[m^n])+\phi(\Phi^{\ell,-}_{i,k}[m^n])\right] +  O(\rho_n), \\[6pt]
\mbox{(resp.) } \; \; \EE\left( \phi(X_{k+1}^n) \big| X_k^n=x_i\right)= \frac{1}{2r} \sum_{\ell=1}^{r}\left[ \phi( \Phi^{\ell,+}_{i,k}[m^n])+\phi(\Phi^{\ell,-}_{i,k}[m^n])\right]+  O(\rho_n^2).
\ea
\ee
\begin{proposition}\label{second_moments_bounded}
Suppose that $\rho_{n}^2=O(h_n)$. Then, there exists  a constant  $c>0$ such that
\be\label{secmom} \sup_{n\in \NN} \sup_{t\in [0,T]}\int_{\RR^{d}} |x|^{2}  \dd m^n(t) \leq  c.\ee

\end{proposition}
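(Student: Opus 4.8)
The plan is to exploit the Markov chain interpretation of the scheme and to run a discrete Gronwall argument on the second moments. Set
\[
M_k^n := \int_{\RR^d}|x|^2\,\dd m^n(t_k) = \sum_{i\in\ZZ^d}|x_i|^2 m_{i,k} = \EE\left(|X_k^n|^2\right),
\]
where the last equality uses that the marginals of the chain $X^n$ are exactly $m^n$. The goal is to establish a recursion of the form $M_{k+1}^n \le (1+Ch_n)M_k^n + Ch_n$, with $C$ independent of $n$ and $k$, and then to conclude via the discrete Gronwall inequality together with the uniform bound on $M_0^n$ already recorded before the statement, namely $M_0^n \le 2\int_{\RR^d}|x|^2\,\dd\bar{m}_0 + d\rho_n^2/2$.

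First I would compute the one-step conditional expectation of $|X_{k+1}^n|^2$. Applying the interpolation identity \eqref{est.esp} to the test function $\phi(x)=|x|^2$, which is $C^2$ with (constant, hence) bounded second derivatives, gives
\[
\EE\left(|X_{k+1}^n|^2 \mid X_k^n = x_i\right) = \frac{1}{2r}\sum_{\ell=1}^r \left[\,|\Phi_{i,k}^{\ell,+}[m^n]|^2 + |\Phi_{i,k}^{\ell,-}[m^n]|^2\,\right] + O(\rho_n^2),
\]
with the $O(\rho_n^2)$ error uniform in $i,k,n$ by virtue of \eqref{www2}. The reason for the symmetric $\pm$ increments in \eqref{discrete_trajectories_stochastic_case} now becomes apparent: writing $\Phi_{i,k}^{\ell,\pm}[m^n]=(x_i+h_n b[m^n](x_i,t_k))\pm\sqrt{rh_n}\,\sigma_\ell[m^n](x_i,t_k)$ and using $|a+c|^2+|a-c|^2=2|a|^2+2|c|^2$, the first order (in $\sqrt{rh_n}$) terms cancel under the average over the two signs, leaving
\[
\frac{1}{2r}\sum_{\ell=1}^r\left[\,|\Phi_{i,k}^{\ell,+}[m^n]|^2 + |\Phi_{i,k}^{\ell,-}[m^n]|^2\,\right] = |x_i|^2 + 2h_n\, x_i\cdot b[m^n](x_i,t_k) + h_n^2|b[m^n](x_i,t_k)|^2 + h_n|\sigma[m^n](x_i,t_k)|^2,
\]
where I have used $\sum_{\ell=1}^r|\sigma_\ell|^2 = |\sigma|^2$.

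Next I would insert the linear growth assumption {\bf(H)}{\rm(ii)}. Bounding $|2h_n\, x_i\cdot b|\le Ch_n(1+|x_i|^2)$, $h_n^2|b|^2\le Ch_n^2(1+|x_i|^2)$ and $h_n|\sigma|^2\le Ch_n(1+|x_i|^2)$, and using $h_n\le T$ to absorb the $h_n^2$ contribution into the $h_n$ one, I obtain the pointwise estimate $\EE(|X_{k+1}^n|^2\mid X_k^n=x_i)\le (1+Ch_n)|x_i|^2 + Ch_n + O(\rho_n^2)$. It is exactly here that the hypothesis $\rho_n^2=O(h_n)$ is used: it lets me absorb the uniform interpolation error into the additive $Ch_n$ term. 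Taking expectation against the law $m_{\cdot,k}$ of $X_k^n$ then yields $M_{k+1}^n \le (1+Ch_n)M_k^n + Ch_n$.

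Finally, setting $a_k = M_k^n+1$ turns the recursion into $a_{k+1}\le (1+Ch_n)a_k$, whence $M_k^n \le (1+Ch_n)^k(M_0^n+1)\le e^{CT}(M_0^n+1)$ because $kh_n\le Nh_n=T$; combined with the uniform bound on $M_0^n$ this controls $M_k^n$ uniformly in $n$ and $k$. Since the time interpolation \eqref{time_interpolation} is affine in the measures, for $t\in[t_k,t_{k+1}]$ one has $\int_{\RR^d}|x|^2\,\dd m^n(t) = \theta M_{k+1}^n + (1-\theta)M_k^n$ with $\theta\in[0,1]$, hence the same bound holds for all $t$; taking the supremum over $t$ and $n$ gives \eqref{secmom}. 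The main point requiring care, and the only genuinely delicate step, is the legitimacy of applying the $O(\rho^2)$ interpolation estimate to the \emph{unbounded} function $|x|^2$ — which is permitted precisely because its second derivatives are bounded — together with the calibration $\rho_n^2=O(h_n)$ ensuring that this error does not destroy the Gronwall recursion.
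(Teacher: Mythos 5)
Your proposal is correct and follows essentially the same route as the paper's proof: reduction to the discrete times via the affine interpolation \eqref{time_interpolation}, the identity $M_k^n=\EE(|X_k^n|^2)$ for the Markov chain, the $O(\rho_n^2)$ interpolation estimate \eqref{est.esp}--\eqref{www2} applied to $\phi(x)=|x|^2$, the linear growth hypothesis {\bf(H)}{\rm(ii)} to obtain the one-step recursion, and a Gronwall-type iteration with $\rho_n^2=O(h_n)$ absorbing the interpolation error. The only cosmetic difference is that you cancel the first-order noise terms by the algebraic identity $|a+c|^2+|a-c|^2=2|a|^2+2|c|^2$, whereas the paper phrases the same cancellation through the mean-zero random variable $Z_k$ of \eqref{probabilities_random_walk}.
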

\begin{proof} By \eqref{time_interpolation}, it is enough to show that there exists $c>0$, independent of $n$, such that 
\be\label{sec_mom_on_discrete_time} \sup_{k=0, \hdots,N_{n}}\int_{\RR^{d}} |x|^{2}  \dd m^n(t_k) \leq  c.\ee
For notational convenience we will omit the superscript $n$. By definition,  
$$\int_{\RR^{d}} |x|^{2} \dd m(t_{k+1})(x) = \sum_{i \in \ZZ^{d}}|x_{i}|^{2}m_{i,k+1}= \EE(|X_{k+1}|^{2}), $$
from which, using \eqref{est.esp} and {\bf(H)}{\rm(ii)}, 
$$\ba{rcl}
\EE(|X_{k+1}|^{2}) &=&  \sum_{i \in \ZZ^{d}} \EE\left(|X_{k+1}|^{2}\big| X_{k}=x_i\right)m_{i,k}, \\[4pt]
\;  & = &\frac{1}{2r} \sum_{\ell=1}^{r} \sum_{i \in \ZZ^{d}}\left[ \big|\Phi_{i,k}^{\ell,+}[m]\big|^{2}+  \big|\Phi_{i,k}^{\ell,-}[m]\big|^{2}\right]m_{i,k}+O(\rho_{n}^{2}), \\[6pt]
\; &=& \EE\left[ \big|X_k+ h_n b[m](X_k,t_k)+ \sqrt{rh_n} \sigma[m](X_k,t_k)Z_{k}\big|^2  \right]+O(\rho_{n}^{2}) , \\[6pt]
\; &= & \EE\left[ |X_k|^{2} + h_n^{2}|b[m](X_k,t_k)|^{2} + rh_n\sum_{\ell=1}^{r}|\sigma_{\ell}[m](X_k,t_{k})|^{2}+2h_n X_k \cdot b[m](X_k,t_k)\right]\\[6pt]
\; & \; &+ O(\rho_{n}^{2}),\\[6pt]
\; &\leq & (1+Ch_n)\EE(|X_k|^{2})+  Ch_n +  O(\rho_n^{2}),
\ea
$$
where $Z_k$ is an $r$-valued random variable, independent of $X_k$,   satisfying \eqref{probabilities_random_walk} and
$C$ is independent of $n$. Iterating, we get
$$\ba{rcl}\int_{\RR^{d}} |x|^{2} \dd m(t_{k+1})(x)&\leq& (1+Ch_n)^{\frac{T}{h_n}} \EE(|X_0|^{2})+(Ch_{n}+O(\rho_{n}^2)) \sum_{k=0}^{N}(1+Ch_n)^{k} + O(\rho_{n}^{2})  \\[6pt]
\; & \leq & e^{CT}  \EE(|X_0|^{2})+  \frac{T}{h_n}\left[Ch_n+O(\rho_{n}^2)\right] e^{CT} +  O(\rho_{n}^2) \\[6pt]
&\leq& e^{CT}  \EE(|X_0|^{2})+ \left(CT+O\left(\frac{\rho_{n}^2}{h_{n}}\right)\right) e^{CT},
\ea$$
from which the result follows.
\end{proof} \medskip

Now, we prove a consistency property of the chain $X^n$ in the spirit of Kushner \cite{MR0469468}. 
For all $0\leq k\leq N_n-1$ let us define $\delta_{k} X^n:= X_{k+1}^n-X_k^n$, $Y_k^n:=\delta_{k} X^n- \EE\left(\delta_{k} X^n|X_k^n \right)$.
\begin{lemma}\label{consistenciacadena} For all $k=0,\hdots,N_n-1$ we have that
$$\ba{rcl} \EE(\delta_k X^n|X_k^n)&=&h_n b[m^n](X_k^n,t_k), \\[6pt]
  \EE( |Y_k^n|^2|X_k^n)&=& h_n \sum_{\ell=1}^{r}  |\sigma_{\ell}[m^n](X_k^n,t_k)|^2 + O(\rho_n^2).\ea$$
\end{lemma}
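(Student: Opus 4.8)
The plan is to read off both identities directly from the consistency estimate \eqref{est.esp} by making two judicious choices of the test function $\phi$, exploiting the symmetric $\pm$ structure of the discrete characteristics $\Phi_{i,k}^{\ell,\pm}[m^n]$. The crucial algebraic fact is that $\Phi_{i,k}^{\ell,+}[m^n]$ and $\Phi_{i,k}^{\ell,-}[m^n]$ differ only by the sign in front of the term $\sqrt{rh_n}\,\sigma_\ell[m^n](x_i,t_k)$, so that their half-sum equals $x_i + h_n b[m^n](x_i,t_k)$, while the squared-distance contributions reproduce exactly the diffusion term.

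For the first identity I would condition on $\{X_k^n = x_i\}$ and apply the computation leading to \eqref{est.esp} to each coordinate projection $\phi(x) = x^{(p)}$, $p = 1,\dots,d$. These are affine, and since the $\mathbb{Q}_1$ interpolation reproduces affine functions exactly (the partition-of-unity property $\sum_i \beta_i \equiv 1$ together with the linear-precision identity $\sum_i x_i \beta_i(x) = x$ gives $I[\hat\phi] = \phi$ with no error), the remainder in \eqref{est.esp} vanishes identically. Hence $\EE(X_{k+1}^n \mid X_k^n = x_i) = \frac{1}{2r}\sum_{\ell=1}^r (\Phi_{i,k}^{\ell,+}[m^n] + \Phi_{i,k}^{\ell,-}[m^n])$; the $\sigma$-contributions cancel and what survives is $x_i + h_n b[m^n](x_i,t_k)$. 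Subtracting $x_i = X_k^n$ yields $\EE(\delta_k X^n \mid X_k^n) = h_n b[m^n](X_k^n,t_k)$, the exact reproduction being essential since the claimed identity carries no remainder.

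For the second identity I would use the conditional-variance decomposition $\EE(|Y_k^n|^2 \mid X_k^n) = \EE(|\delta_k X^n|^2 \mid X_k^n) - |\EE(\delta_k X^n \mid X_k^n)|^2$, the last term being $h_n^2|b[m^n](X_k^n,t_k)|^2$ by the first part. To compute the first term on $\{X_k^n = x_i\}$ I would apply \eqref{est.esp} to $\phi(x) = |x - x_i|^2$; this function is $C^2$ with constant Hessian, so \eqref{www2} applies and yields the uniform-in-space $O(\rho_n^2)$ remainder. Expanding $|\Phi_{i,k}^{\ell,\pm}[m^n] - x_i|^2 = |h_n b \pm \sqrt{rh_n}\,\sigma_\ell|^2$ and summing over $\pm$, the cross terms $2 h_n\sqrt{rh_n}\,b\cdot\sigma_\ell$ cancel, leaving $\frac{1}{2r}\sum_\ell(2h_n^2|b|^2 + 2 r h_n|\sigma_\ell|^2) = h_n^2|b|^2 + h_n\sum_{\ell=1}^r|\sigma_\ell[m^n](x_i,t_k)|^2$. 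Subtracting $h_n^2|b|^2$ from the variance formula cancels the drift contribution and produces $h_n\sum_{\ell=1}^r|\sigma_\ell[m^n](X_k^n,t_k)|^2 + O(\rho_n^2)$, as claimed.

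The only delicate points, rather than genuine obstacles, are the two justifications of the remainder terms. That the first identity is \emph{exact} requires the exact reproduction of affine functions by $I$, a property strictly stronger than the Lipschitz estimate \eqref{lipchitzianidad} (which would only give an $O(\rho_n)$ error); and that the $O(\rho_n^2)$ in the second identity is \emph{uniform} requires noting that $\phi(x)=|x-x_i|^2$ has a bounded (in fact constant) Hessian, so that the cell-wise interpolation bound behind \eqref{www2} holds uniformly over $\RR^d$ despite $\phi$ being unbounded. Once these are in place, the rest is the elementary algebra of the $\pm$ increments.
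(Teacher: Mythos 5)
Your proof is correct and follows essentially the same route as the paper's: both identities are read off from the exact reproduction of affine functions by the $\mathbb{Q}_1$ interpolation (for the drift, with no remainder) and from the second-order estimate \eqref{www2} applied to a quadratic with constant Hessian (for the diffusion, giving the uniform $O(\rho_n^2)$), combined with the cancellation of the $\pm\sqrt{rh_n}\,\sigma_\ell$ terms. The only difference is cosmetic: the paper computes $\EE\left(|Y_k^n|^2 \mid X_k^n=x_{i}\right)$ directly by interpolating the squared deviation from the conditional mean $x_{i}+h_n b[m^n](x_{i},t_k)$, whereas you pass through the conditional-variance decomposition and interpolate $|x-x_{i}|^2$; the two computations coincide up to bookkeeping.
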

\begin{proof}
By definition  of $p_{i_{k},i_{k+1}}^{n,k}$ we have
$$\ba{rcl}
\EE(\delta_k X^n|X_k^n=x_{i_k})&=& \sum_{i_{k+1}}\left(x_{i_{k+1}}- x_{i_{k}}\right)p_{i_{k},i_{k+1}}^{n,k}\\[6pt]
\; &=& \frac{1}{2r}\sum_{\ell=1}^{r}\left( I[\mbox{id}-x_{i_k}](\Phi^{\ell,+}_{i_{k},k}[m^n])+ I[\mbox{id}-x_{i_k}](\Phi^{\ell,-}_{i_{k},k}[m^n])\right)\\[6pt]
\; &=& h_n b[m^n](x_{i_k},t_k),
\ea
$$ 
where $\mbox{id}(x)=x$ and the last equality follows from the fact that $I[\mbox{id}-x_{i_k}](y)= y-x_{i_k}$ for all $y\in \RR^d$. 
Analogously,
$$ \EE( |Y_k^n|^2|X_k^n=x_{i_k})= \sum_{i_{k+1}}\left[x_{i_{k+1}}- x_{i_{k}}- \EE(\delta_k X^n|X_k^n=x_{i_k})\right]^2 p_{i_{k},i_{k+1}}^{n,k}.$$
Using \eqref{www2} and the definition of  $p_{i_{k},i_{k+1}}^{n,k}$ again we get that 
$$
 \EE( |Y_k^n|^2|X_k=x_{i_k})= h_n  \sum_{\ell=1}^{d}|\sigma_{\ell}[m^n](x_{i_k},t_k)|^2 + O(\rho_n^2),
$$
from which the result follows.
\end{proof}
Now, we prove that  $\C:= \{m^n \; ; \; n\in \NN\}$ satisfies \eqref{equicontinuity_in_C_P1}.
\begin{proposition} \label{masmammadapppqpqwppa} 
Suppose that $\rho_n^2=O(h_n)$. Then, there exists  a constant  $C>0$ such that
\be\label{equicontinuity_in_d2} \sup_{n\in \NN} d_{2}(m^n(t),m^n(s) )\leq C|t-s|^{\half} \hspace{0.3cm} \forall \;  t, s \in [0,T].\ee
In particular, since $d_1 \leq d_2$, we have that $\C$ satisfies \eqref{equicontinuity_in_C_P1}.
\end{proposition}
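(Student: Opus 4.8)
The plan is to produce, for each pair of \emph{grid} times $t_k \le t_{k'}$, an admissible coupling of the marginals $m^n(t_k)$ and $m^n(t_{k'})$ directly from the Markov chain $X^n$, and then to estimate the transport cost by a drift-plus-fluctuation decomposition. Concretely, the joint law of $(X_k^n, X_{k'}^n)$ is a transport plan between $m^n(t_k)=\mathrm{Law}(X_k^n)$ and $m^n(t_{k'})=\mathrm{Law}(X_{k'}^n)$, so by the definition of $d_2$,
$$d_2\bigl(m^n(t_k),m^n(t_{k'})\bigr)^2 \le \EE\bigl|X_{k'}^n - X_k^n\bigr|^2.$$
First I would telescope $X_{k'}^n-X_k^n=\sum_{j=k}^{k'-1}\delta_j X^n$ and split each increment as $\delta_j X^n = \EE(\delta_j X^n\mid X_j^n) + Y_j^n$, i.e. into the drift part $A:=\sum_j \EE(\delta_j X^n\mid X_j^n)$ and the fluctuation part $M:=\sum_j Y_j^n$, using Lemma~\ref{consistenciacadena}. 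Then $\EE|X_{k'}^n-X_k^n|^2 \le 2\EE|A|^2 + 2\EE|M|^2$.

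For the drift term, Lemma~\ref{consistenciacadena} gives $A = h_n\sum_{j=k}^{k'-1} b[m^n](X_j^n,t_j)$; by Cauchy--Schwarz in the $(k'-k)$-term sum, the linear growth \eqref{linear_growth}, and the uniform second-moment bound of Proposition~\ref{second_moments_bounded}, I would obtain $\EE|A|^2 \le C\,h_n^2(k'-k)^2 = C(t_{k'}-t_k)^2 \le CT(t_{k'}-t_k)$, which is of the right (indeed subdominant) order since $t_{k'}-t_k\le T$. For the fluctuation term the key is that $(Y_j^n)_j$ are martingale increments for the natural filtration of the chain: by the Markov property $\EE(Y_j^n\mid X_0^n,\dots,X_j^n)=0$, so for $i<j$ the cross term $\EE(Y_i^n\cdot Y_j^n)$ vanishes and
$$\EE|M|^2 = \sum_{j=k}^{k'-1}\EE|Y_j^n|^2.$$
Invoking the second identity in Lemma~\ref{consistenciacadena}, the growth bound \eqref{linear_growth}, and Proposition~\ref{second_moments_bounded} again, each term is $\le C h_n + O(\rho_n^2)$, whence $\EE|M|^2 \le C(k'-k)h_n + (k'-k)O(\rho_n^2) = C(t_{k'}-t_k) + (t_{k'}-t_k)O(\rho_n^2/h_n)$. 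This is the step where the hypothesis $\rho_n^2=O(h_n)$ is essential: it forces $\rho_n^2/h_n=O(1)$, so the accumulated discretization error stays $O(t_{k'}-t_k)$ uniformly in $n$. Collecting the two estimates yields $d_2(m^n(t_k),m^n(t_{k'}))^2 \le C(t_{k'}-t_k)$ with $C$ independent of $n$.

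It then remains to pass from grid times to arbitrary $s,t\in[0,T]$ for the piecewise-linear interpolation \eqref{time_interpolation}. Here I would exploit the convex-combination structure $m^n(t)=(1-\lambda)m^n(t_k)+\lambda m^n(t_{k+1})$ on each cell $[t_k,t_{k+1}]$: using the Markov coupling $\mathrm{Law}(X_k^n,X_{k+1}^n)$ together with an independent randomization (a Bernoulli, or for two points in the same cell a common uniform threshold giving a monotone coupling) produces couplings whose cost is proportional to the time gap, so that $d_2(m^n(t),m^n(t_k))^2\le C(t-t_k)$ and, for $s,t$ in the same cell, $d_2(m^n(s),m^n(t))^2\le C(t-s)$. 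Combining these with the grid estimate through the triangle inequality and the elementary bound $\sqrt{a}+\sqrt{b}+\sqrt{c}\le\sqrt{3}\sqrt{a+b+c}$ (the three nonnegative time gaps summing to $|t-s|$) gives \eqref{equicontinuity_in_d2}; finally, since $d_1\le d_2$, this shows $\C$ satisfies \eqref{equicontinuity_in_C_P1}. I expect the genuine difficulty to be concentrated in the fluctuation estimate — securing the martingale orthogonality so that the diffusion contributes the sharp $O(|t-s|)$ rather than a blown-up $O(|t-s|^2)$-type bound, while simultaneously keeping the accumulated $O(\rho_n^2)$ errors under control via $\rho_n^2=O(h_n)$; the extension off the grid is routine bookkeeping.
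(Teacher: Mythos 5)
Your proposal is correct and follows essentially the same route as the paper's proof: the identical Markov-chain coupling bound $d_2^2\le \EE|X_{k'}^n-X_k^n|^2$, the same drift/fluctuation decomposition with martingale orthogonality via Lemma \ref{consistenciacadena}, and the same use of Proposition \ref{second_moments_bounded} together with $\rho_n^2=O(h_n)$ for the grid-time estimate, followed by the triangle inequality for general times. The only cosmetic difference is the off-grid step, where the paper invokes convexity of $d_2^2$ (via its dual representation) applied to the interpolation \eqref{time_interpolation}, whereas you construct the same $O(|t-s|)$ bound by an explicit Bernoulli/common-threshold mixture coupling -- two equivalent ways of expressing the same estimate.
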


\begin{proof} The proof is divided into two steps: \\
{\it Step 1:} We first show that  for given $N_n$ there exists a constant $C$, independent of $n$, such that
\be\label{equicontinuity_discrete_times}d_{2}(m^n(t_k),m^n(t_k') )\leq C\sqrt{|k-k'|h_n}  \hspace{0.4cm} \; \forall \; k, k'=0, \hdots, N_n.\ee
We assume, without loss of generality, that $k'=0$. For notational convenience, we omit the superscript $n$ on the sequences $X_k^n$, $\delta_k X^n$ and $  Y_k^n$. 
By the definition of $d_2$ we have
\be\label{estimated2}
d_2(m^n(t_k),m_0^n)\leq \left[\EE(|X_k-X_0|^2)\right]^{\half}.
\ee
We have that 
\be\label{decomposicion}
\EE\left(|X_k-X_0|^2\right)= \EE  \left|\sum_{p=0}^{k-1} \left( Y_p+ \EE\left(\delta_{p} X|X_p\right) \right)\right|^2 \leq 2 \EE  \left|\sum_{p=0}^{k-1}Y_p\right|^2 + 2\EE  \left|\sum_{p=0}^{k-1}\EE\left(\delta_{p} X|X_p \right)\right|^2.
\ee
Now, for  $ 0 \leq r < l \leq k-1$ conditioning on $\F_{l}:= \sigma (X_0, \hdots, X_{l})$ and using that, by the Markov property, $\EE(\delta_{l} X| \F_{l})=\EE(\delta_{l} X| X_l)$ we get
$$ \EE(Y_l \cdot Y_{r})= \EE\left[\left(\delta_{l} X- \EE\left(\delta_{l} X|X_l \right) \right)\cdot Y_r\right]=\EE( \EE\left[\left(\delta_{l} X- \EE\left(\delta_{l} X|X_l\right) \right)\big| \F_{l} \right]\cdot Y_r)=0,   $$
and so, by Lemma \ref{consistenciacadena},
\be\label{ypcuadri}
\ba{rl}
\EE  \left|\sum_{p=0}^{k-1}Y_p\right|^2 &=  \sum_{p=0}^{k-1}\EE( \EE(|Y_p|^2|X_p)) \\[6pt]
\; &=h_n \sum_{p=0}^{k-1}\sum_{\ell=1}^{r}\EE(|\sigma_{\ell}[m^n](X_p,t_p)|^2) + O\left(k \rho_n^2\right)\\[6pt]
\; &\leq Ch_nk\left(1+ \sup_{p=0,\hdots, N} \EE |X_p|^2\right) +O\left(k \rho_n^2\right). \ea
\ee
On the other hand, using  Lemma \ref{consistenciacadena} again,
$$\left|\sum_{p=0}^{k-1}\EE\left(\delta_{p} X|X_p \right)\right|^2 \leq Ch_n^2 k\sum_{p=0}^{k-1}(1+ |X_p|^2), $$
and so 
\be\label{sommadeidrift}\EE  \left|\sum_{p=0}^{k-1}\EE\left(\delta_{p} X|X_p \right)\right|^2 \leq Ch_n^2 k^2\left(1+ \sup_{p=0,\hdots, N} \EE |X_p|^2\right).
\ee
By Proposition \ref{second_moments_bounded},  \eqref{ypcuadri}, \eqref{sommadeidrift}, \eqref{decomposicion} and our assumption $\rho_n^2= O(h_n)$, we get the existence of $C>0$ such that \eqref{equicontinuity_discrete_times} holds true.\smallskip\\
{\it Step 2: proof of \eqref{equicontinuity_in_d2}: } Let $0 \leq s < t \leq T$ and $k'$, $k$ such that $ s \in [t_{k'}, t_{k'+1}[$ and $ t \in [t_{k}, t_{k+1}[$. Then, by the triangular inequality
\be\label{triangular_inequality}
d_{2}(m^n(t), m^n(s)) \leq d_{2}(m^n(t), m^n(t_k))+d_{2}(m^n(t_k), m^n(t_{k'+1}))+ d_{2}(m^n(t_{k'+1}), m^n(s)). 
\ee
By the dual representation of $d_2^2(\cdot, \cdot)$ (see \cite[Theorem 1.3]{Villani03}), this function is convex in $\P_{2}(\RR^{d})\times\P_{2}(\RR^{d}) $. Thus, relations \eqref{time_interpolation} and \eqref{equicontinuity_discrete_times} imply  that 
 $$d_{2}^2(m^n(t), m^n(t_k))  \leq \left(\frac{t-t_{k}}{h_n}\right)d_{2}^2(m^n(t_{k+1}), m^n(t_k))\leq C^2(t-t_{k}),$$
 from which 
\be\label{ineq_inter_1}
 d_{2}(m^n(t), m^n(t_k)) \leq C (t-t_{k})^{\half}.
\ee
 Analogously, 
\be\label{ineq_inter_2}
d_{2}(m^n(t_{k'+1}), m^n(s)) \leq C (t_{k'+1}-s)^{\half}.
\ee
Relations \eqref{triangular_inequality}, \eqref{ineq_inter_1}, \eqref{ineq_inter_2} and the Cauchy-Schwarz  inequality imply the existence of $C>0$, independent of $n$, such that  
$$
d_{2}(m^n(t),m^n(s) )\leq C|t-s|^{\half}.
$$
Relation \eqref{equicontinuity_in_d2} follows. 
\end{proof}
For notational convenience, for all $\varphi \in C_{0}^{\infty}(\RR^d)$ let us set  \small
\be\label{generator_L}
L_{b,\sigma,\varphi}[\mu](x,t):=\half \sum_{i,j} a_{i,j}[\mu](x,t) \partial_{x_i, x_j}^{2} \varphi(x)+ b[\mu](x,t) \cdot \nabla \varphi(x)  
\hspace{0.2cm} \forall \; (\mu, x,t)\in C([0,T];\P_1(\RR^d)) \times \RR^d \times [0,T].\ee \normalsize
We have now all the elements to prove our main convergence results.  We consider  first the case where, in addition to {\bf(H)},  the coefficients satisfy the following local Lipschitz property:\medskip\\
{\bf(Lip)} For any $\mu \in C([0,T];\P_{1}(\RR^{d}))$  and a compact set $K\subseteq \RR^d$,   there exists a constant $C_{K}>0$ such that
\be\label{local_Lipschitz_inequality}
|b[\mu](y,t)-b[\mu](x,t)| +|\sigma[\mu](y,t)-\sigma[\mu](x,t)| \leq C|y-x|  \hspace{0.2cm} \forall  \;  x, \; y \in  K, \; \;   t\in [0,T].
\ee \smallskip

The case of more general coefficients satisfying only {\bf(H)} will be treated just after. 

\begin{theorem}\label{convergencia_1} Assume ${\bf(H)}$-${\bf(Lip)}$ and that $\rho_n^2=o(h_n)$. Then, every limit point $m \in C([0,T]; \P_1(\RR^d))$ of $m^n$ {\rm(}there exists at least one{\rm)} solves $(FPK)$. In particular,  $(FKP)$ admits at least one solution.
\end{theorem}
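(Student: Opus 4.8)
The plan is to combine the compactness result already established (Lemma \ref{compactness_in_P_1}, whose hypotheses \eqref{equicontinuity_in_C_P1} and \eqref{bounded_second_moments} are verified for $\C=\{m^n\}$ in Propositions \ref{second_moments_bounded} and \ref{masmammadapppqpqwppa}) with a passage to the limit in the weak formulation. Since $\C$ is relatively compact in $C([0,T];\P_1(\RR^d))$, there is a subsequence (not relabeled) and a limit $m$ with $m^n \to m$ in $C([0,T];\P_1(\RR^d))$. The task is to show this $m$ satisfies \eqref{solution_FP} for every $\varphi \in C_0^\infty(\RR^d)$ and every $t$. The natural strategy is to write a discrete analogue of \eqref{solution_FP} for the Markov chain $X^n$, using the consistency estimates of Lemma \ref{consistenciacadena}, and then send $n\to\infty$.

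First I would fix $\varphi \in C_0^\infty(\RR^d)$ and $t\in[0,T]$, pick $k=k_n$ with $t\in[t_k,t_{k+1}[$, and compute the telescoping sum
\be
\int_{\RR^d}\varphi\,\dd m^n(t_k) - \int_{\RR^d}\varphi\,\dd\bar m_0
= \sum_{p=0}^{k-1}\EE\bigl(\varphi(X^n_{p+1})-\varphi(X^n_p)\bigr) + O(\rho_n),
\ee
where the $O(\rho_n)$ accounts for replacing $\bar m_0$ by $m^n(0)$ via \eqref{approximation_initial_condition}. For each increment I would condition on $X^n_p$ and Taylor-expand $\varphi$ to second order around $X^n_p$, using $\delta_p X^n = X^n_{p+1}-X^n_p$. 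The first-order term produces $\nabla\varphi(X^n_p)\cdot\EE(\delta_p X^n\mid X^n_p) = h_n\,\nabla\varphi(X^n_p)\cdot b[m^n](X^n_p,t_p)$ by Lemma \ref{consistenciacadena}, and the second-order term, after splitting $\delta_p X^n = Y^n_p + \EE(\delta_p X^n\mid X^n_p)$ and using the second moment identity of Lemma \ref{consistenciacadena}, produces $\frac{h_n}{2}\sum_{i,j}a_{i,j}[m^n](X^n_p,t_p)\,\partial^2_{x_ix_j}\varphi(X^n_p) + O(\rho_n^2)$ plus a drift-squared contribution of order $h_n^2$. Summing over $p$ and recognizing these as Riemann sums, together with the $O(\rho_n^2/h_n)\to 0$ and $O(kh_n^2)=O(h_n)\to0$ control from $\rho_n^2=o(h_n)$, gives that $\int\varphi\,\dd m^n(t_k)-\int\varphi\,\dd\bar m_0$ equals
\be
\int_0^{t}\!\!\int_{\RR^d} L_{b,\sigma,\varphi}[m^n](x,s)\,\dd m^n(s)(x)\,\dd s + o(1),
\ee
using the notation \eqref{generator_L}, where I must also control the Taylor remainder — this is where local Lipschitzianity of $b,\sigma$ from {\bf(Lip)} enters, since the cubic remainder term in the Taylor expansion is bounded using $\|D^3\varphi\|_\infty$ and $\EE|\delta_p X^n|^3 = O(h_n^{3/2})$, the latter requiring the growth control on the coefficients.

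The final step is the passage to the limit $n\to\infty$. On the left, $\int\varphi\,\dd m^n(t_k)\to\int\varphi\,\dd m(t)$ by uniform convergence in $C([0,T];\P_1(\RR^d))$ together with $|t-t_k|\le h_n\to0$ and the equicontinuity \eqref{equicontinuity_in_d2}. On the right, I need $L_{b,\sigma,\varphi}[m^n](\cdot,s)$, integrated against $m^n(s)$ and in $s$, to converge to $L_{b,\sigma,\varphi}[m](\cdot,s)$ integrated against $m(s)$. Since $\varphi$ has compact support, only a fixed compact set in $x$ matters, and by continuity of $b,\sigma$ (assumption {\bf(H)}{\rm(i)}) the coefficients $b[m^n],\sigma[m^n]\to b[m],\sigma[m]$ uniformly on compacta when $m^n\to m$ in $C([0,T];\P_1(\RR^d))$; combined with weak convergence $m^n(s)\to m(s)$ this yields the convergence of the integrand, and dominated convergence in $s$ (the integrand is bounded on the compact support of $\varphi$) closes the argument.

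The main obstacle I anticipate is the passage to the limit in the nonlinear coefficient integral: one must justify that $\int_{\RR^d}L_{b,\sigma,\varphi}[m^n](x,s)\,\dd m^n(s)(x)\to\int_{\RR^d}L_{b,\sigma,\varphi}[m](x,s)\,\dd m(s)(x)$, which couples the weak convergence of the measures $m^n(s)$ with the convergence of the coefficient \emph{functionals} $\psi[m^n]\to\psi[m]$ — and crucially the latter is continuity in the \emph{path} argument $m\in C([0,T];\P_1(\RR^d))$, not merely in a single time marginal. Establishing this joint convergence of (continuous function)$\times$(weakly convergent measure), uniformly enough in $s$ to integrate, is the delicate point; the compact support of $\varphi$ is what makes it tractable, confining everything to a fixed compact set where uniform continuity of $b,\sigma$ can be exploited.
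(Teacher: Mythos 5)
Your skeleton is the same as the paper's (compactness via Propositions \ref{second_moments_bounded} and \ref{masmammadapppqpqwppa} and Lemma \ref{compactness_in_P_1}, a telescoping discrete weak formulation, second-order consistency, a Riemann sum in time, then passage to the limit), but there is a genuine gap at the one point where {\bf(Lip)} is actually needed, and you have spent {\bf(Lip)} on a step where it plays no role. The cubic Taylor remainder is controlled by $\|D^3\varphi\|_\infty$ and the conditional moments of $\delta_pX^n$, which require only the growth bound {\bf(H)}{\rm(ii)} (plus a localization to a neighborhood of $\supp \varphi$, since only second moments of $X^n_p$ are available); no Lipschitz regularity of $b,\sigma$ in $x$ enters there. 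Where Lipschitz regularity is indispensable is precisely the step you dispatch as ``recognizing these as Riemann sums'': to pass from $h_n\sum_{p}\int_{\RR^d} L_{b,\sigma,\varphi}[\,\cdot\,](x,t_p)\,\dd m^n(t_p)(x)$ to $\int_0^{t}\int_{\RR^d} L_{b,\sigma,\varphi}[\,\cdot\,](x,s)\,\dd m^n(s)(x)\,\dd s$ one must estimate $\int_{\RR^d} L(x,t_p)\,\dd\big(m^n(s)-m^n(t_p)\big)(x)$ for $s\in[t_p,t_{p+1})$, i.e.\ pair a fixed function against two measures that are close only in $d_1$. The bound $d_1(m^n(s),m^n(t_p))\le C\sqrt{h_n}$ from \eqref{equicontinuity_in_d2} gives no control on this pairing when the integrand is merely continuous; the paper first replaces $L_{b,\sigma,\varphi}[m^n]$ by $L_{b,\sigma,\varphi}[m]$ using only continuity {\bf(H)}{\rm(i)} (estimate \eqref{replacement_integral_L_I}), and then invokes {\bf(Lip)} to make $L_{b,\sigma,\varphi}[m](\cdot,t_p)$ Lipschitz uniformly in $p$, so that the Kantorovich--Rubinstein duality \eqref{distance_1_difference_lipschitz} yields the $O(\sqrt{h_n})$ estimate \eqref{replacement_integral_L_II}. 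As written, your chain of equalities carrying the $o(1)$ does not close; the gap is fillable (run the paper's order of operations, or approximate the compactly supported continuous integrand uniformly by Lipschitz functions), but some such mechanism must be supplied, and it is exactly what {\bf(Lip)} is for in this theorem.

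A second, more minor, point: Lemma \ref{consistenciacadena} as stated gives only the scalar conditional second moment $\EE\big(|Y^n_k|^2\,\big|\,X^n_k\big)$, which identifies $\tr\big(\sigma\sigma^T D^2\varphi\big)$ only when $D^2\varphi$ is a multiple of the identity. To produce the full term $\frac{h_n}{2}\sum_{i,j}a_{i,j}[m^n]\partial^2_{x_ix_j}\varphi$ you need the conditional covariance matrix, $\EE\big(Y^n_k (Y^n_k)^T\,\big|\,X^n_k\big)=h_n\,\sigma[m^n]\sigma[m^n]^T+O(\rho_n^2)$; the proof is the same computation (apply \eqref{www2} to the quadratics $x\mapsto x_ix_j$), but it is not the statement you cite. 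The paper sidesteps this entirely: in \eqref{several_computations} it never uses Lemma \ref{consistenciacadena}, Taylor-expanding $\varphi(\Phi^{\ell,\pm}_{j,k}[m^n])$ directly to fourth order (so that the odd $h_n^{1/2}$-terms cancel between $+$ and $-$), which produces the generator \eqref{generator_L} with error $O(\rho_n^2+h_n^2)$ in one stroke.
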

\begin{proof} By Proposition \ref{second_moments_bounded}, Proposition \ref{masmammadapppqpqwppa}  and Lemma \ref{compactness_in_P_1}, with $\C= \{m^n \; ; \; n \in \NN\}$, the sequence $m^n$ has at least one limit point $m$. We use the same superscript $n$ to index a   subsequence $m^n$ converging to $m$ in $C([0,T]; \P_1(\RR^d))$ and we need to show that $m$ satisfies \eqref{solution_FP}. Let  $t\in ]0,T]$ and, wihtout loss of generality, consider a sequence $t_{n'}=n'h_n$ such that $t \in ]t_{n'}, t_{n'+1}]$. Then, for every $\varphi \in C_{0}^{\infty}(\RR^{d})$
\be\label{sumatelescopica}
\int_{\RR^{d}} \varphi(x) \dd m^{n}(t_{n'})(x)= \int_{\RR^{d}} \varphi(x) \dd m^{n}(0)(x)+  \sum_{k=0}^{n'-1} \int_{\RR^{d}} \varphi(x) \dd \left[m^{n}(t_{k+1})-m^{n}(t_{k})\right](x).
\ee
For all $k=0, \hdots, n'-1$ we have that 
\be\label{several_computations}\ba{l}
 \int_{\RR^{d}} \varphi(x) \dd m^{n}(t_{k+1})(x)= \sum_{i\in \ZZ^{d}} \varphi(x_{i}) m_{i,k+1}^{n} \\[8pt]
 \hspace{3.5cm} = \sum_{j \in \ZZ^{d}} m_{j,k}^{n}\frac{1}{2r} \sum_{\ell=1}^{r}\sum_{i \in \ZZ^d}\varphi(x_i)  \left[ \beta_{i}( \Phi_{j,k}^{\ell,+}[m^n])+ \beta_{i}( \Phi_{j,k}^{\ell,-}[m^n])\right] \\[8pt]
 \hspace{3.5cm} =  \sum_{j \in \ZZ^{d}} m_{j,k}^{n} \frac{1}{2r} \sum_{\ell=1}^{r} \left[ I[\varphi](\Phi_{j,k}^{\ell,+}[m^n])+  I[\varphi](\Phi_{j,k}^{\ell,-}[m^n])\right]\\[8pt]
 \hspace{3.5cm} = \sum_{j \in \ZZ^{d}} m_{j,k}^{n}  \frac{1}{2r} \sum_{\ell=1}^{r}  \left[ \varphi (\Phi_{j,k}^{\ell,+}[m^n])+  \varphi(\Phi_{j,k}^{\ell,-}[m^n])\right] +  O(\rho_{n}^{2})
 \\[8pt]
 \hspace{3.5cm} =  \sum_{j \in \ZZ^{d}} m_{j,k}^{n}\left[\varphi(x_{j})+ h_n L_{b,\sigma,\varphi}[m^{n}](x_{j},t_k)\right] + O\left( \rho_n^2 +h_n^2\right)  \\[8pt]
 \hspace{3.5cm} =\int_{\RR^{d}} \left[\varphi(x)+ h_nL_{b,\sigma,\varphi}[m^{n}](x,t_k)\right] \dd m^{n}(t_{k})(x) +O\left( \rho_n^2 + h_n^2\right), 
 \ea\ee
where we have used a fourth order Taylor expansion for the terms $\varphi (\Phi_{j,k}^{\ell,+}[m^n])$ and $\varphi(\Phi_{j,k}^{\ell,-}[m^n])$. As a consequence,  \eqref{sumatelescopica} yields  \small
\be\label{escriturasimplificada}
\int_{\RR^{d}} \varphi(x) \dd m^{n}(t_{n'})(x)= \int_{\RR^{d}} \varphi(x) \dd m^{n}(0)(x)+  h_n \sum_{k=0}^{n'-1} \int_{\RR^{d}}L_{b,\sigma,\varphi}[m^{n}](x,t_k) \dd m^{n}(t_{k})(x)+ O\left( \frac{\rho_n^2}{h_n} + h_n\right).
\ee \normalsize
Assumption ${\bf(H)}(i)$ implies the existence of a modulus of continuity  $\bar{\omega}_1$, independent of $k$,  such that 
\be\label{replacement_integral_L_I}
\int_{\RR^{d}}L_{b,\sigma,\varphi}[m^{n}](x,t_k) \dd m^{n}(t_{k})(x)=\int_{\RR^{d}}L_{b,\sigma,\varphi}[m](x,t_k) \dd m^{n}(t_{k})(x)+ \bar{\omega}_1\left( \sup_{t\in [0,T]} d_1(m^n(t),m(t))\right).
\ee
Since $\phi$ has a compact support, condition {\bf(Lip)} implies that  $L_{b,\sigma,\varphi}[m](\cdot,t_k)$ is Lipschitz, uniformly in $k$. Thus, by \eqref{distance_1_difference_lipschitz} and \eqref{equicontinuity_in_d2}, we have
\be\label{replacement_integral_L_II}
\left|\int_{\RR^{d}}L_{b,\sigma,\varphi}[m](x,t_k) \dd \left(m^{n}(s)-m^{n}(t_{k})\right)(x) \right| \leq Cd_{1}(m^{n}(s),m^{n}(t_k))\leq C' \sqrt{h_n} \hspace{0.4cm} \forall \; s \in [t_{k}, t_{k+1}),
\ee
for some positive constants $C$ and $C'$, independent of $n$. This implies that 
$$
\left|h_n \int_{\RR^{d}}L_{b,\sigma,\varphi}[m](x,t_k)\dd m^{n}(t_{k})(x) - \int_{t_{k}}^{t_{k+1}} \int_{\RR^{d}}L_{b,\sigma,\varphi}[m](x,t_k)\dd m^{n}(s)(x)\right| =O\left(h_n^{\frac{3}{2}}\right).
$$
Therefore, by \eqref{escriturasimplificada},  
\be\label{escriturasimplificada_caso_continuo}\ba{ll}
\int_{\RR^{d}} \varphi(x) \dd m^{n}(t_{n'})(x)=& \int_{\RR^{d}} \varphi(x) \dd m^{n}(0)(x)+  \int_{0}^{t_{n'}} \int_{\RR^{d}}\hat{L}_{b,\sigma,\varphi}^n[m](x,s) \dd m^n(s)(x) \dd s\\[8pt]
\; &  + O\left( \frac{\rho_n^2}{h_n} + \sqrt{h_n} +\bar{\omega}_1\left( \sup_{t\in [0,T]} d_1(m^n(t),m(t))\right)\right),\ea
\ee  
where 
$$
\hat{L}_{b,\sigma,\varphi}^n[m](x,s):= L_{b,\sigma,\varphi}[m](x,t_{k}) \hspace{0.4cm} \forall \; x \in \RR^d, \; \; s \in [t_{k}, t_{k+1}).
$$
By {\bf(H)}{\rm(i)}, and the fact that $\phi$ has compact support, we have that $\hat{L}_{b,\sigma,\varphi}^n[m](\cdot, \cdot)$ is uniformly bounded in $n$ and converges uniformly  to $L_{b,\sigma,\varphi}[m](\cdot, \cdot)$ in $\RR^d\times [0,T]$. As a consequence,  for each $s \in [0,T]$, we have that  $\int_{\RR^{d}}\hat{L}_{b,\sigma,\varphi}[m](x,s) \dd m^n(s)(x)$ is uniformly bounded and converges, as $n \to \infty$, to $\int_{\RR^{d}}L_{b,\sigma,\varphi}[m](x,s) \dd m(s)(x)$. Therefore, by Lebesgue's dominated convergence theorem, the second term in the right hand side of \eqref{escriturasimplificada_caso_continuo} converges to 
$$
\int_{0}^{t} \int_{\RR^{d}}L_{b,\sigma,\varphi} [m](x,s) \dd m(s)(x) \dd s. 
$$
%
Finally, passing to the limit in  \eqref{escriturasimplificada_caso_continuo}, we get that \eqref{solution_FP} holds true.
\end{proof}

In the remainder of this section, we consider the case where $b$ and $\sigma$ satisfy only assumption {\bf(H)}. Since in the proof Theorem \ref{convergencia_1} the local Lipchitz assumption {\bf(Lip)} plays an important role,  in the present case we need to regularize the coefficients, which will be done by convolution with a mollifier. Let $\phi \in C^{\infty}(\RR^d)$ have a compact support contained in the closed unit ball $B(0,1):= \{x \in \RR^d \; ; \; |x| \leq 1\}$ and, given a sequence $\eps_n$, with $0< \eps_n\leq 1$, set $\phi_{\eps_n}(x) := \phi(x/{\eps_n})/(\eps_n)^d$ for all $x\in \RR^d$. Let us define 
$$
b_{n}[\mu](x,t):= \phi_{\eps_n} \ast b[\mu](x,t) \hspace{0.3cm} \mbox{and } \; \;  \sigma_{n}[\mu](x,t):= \phi_{\eps_n} \ast \sigma[\mu](x,t),
$$
where the convolution is applied in the space variable $x$ and componentwise for the coordinates of $b$ and $\sigma$. It is easy to check that for each $\mu \in C([0,T]; \P_1(\RR^d))$ and each compact set $K\subseteq \RR^d$, we have that $b_{n}$ and $\sigma_{n}$ satisfy \eqref{local_Lipschitz_inequality} with $C_K= C_{K}'/\eps_n$, where $C_K'$ depends only on $\phi$ and $$\sup \left\{ |b[\mu](x,t)| + |\sigma[\mu](x,t)| \; | \;   x\in K+B(0,1), \; t\in [0,T]\right\}<\infty.$$
We consider the approximation \eqref{scheme_nonlinear_stochastic_case_II} of $(FPK)$ with $\Phi_{i,k}^{\ell,+}[\mu]$ and $\Phi_{i,k}^{\ell,-}[\mu]$ replaced by 
$$\ba{rcl}
\Phi_{i,k}^{n,\ell,+}[\mu]  &:=&  x_{i} + h_n b_n[\mu](x_{i},t_{k})+ \sqrt{r h_n}(\sigma_n)_{\ell}[\mu](x_{i},t_{k}), \\[6pt]
\Phi_{i,k}^{n,\ell,-}[\mu]  &: =&  x_{i} + h_n b_n[\mu](x_{i},t_{k})-\sqrt{r h_n }(\sigma_n)_{\ell}[\mu](x_{i},t_{k}),
\ea
$$
respectively. Namely,  find $m\in \SS^{\rho,h}$ such that
\be\label{scheme_nonlinear_stochastic_case_II_approximated}\ba{l}
m_{i,0} = \bar{m}_{0} (E_{i})  \hspace{0.4cm} \forall \; i\in \ZZ^d,  \\[6pt]
m_{i, k+1} = \frac{1}{2r}\sum\limits_{\ell=1}^{r}\sum\limits_{j \in \ZZ^{d}} \left[\beta_{i} ( \Phi_{j,k}^{n,\ell,+}[m])+\beta_{i}(\Phi_{j,k}^{n,\ell,-}[m])\right] m_{j,k} \hspace{0.4cm} \forall \; i\in \ZZ^d, \; \; k=0,\hdots, N-1. 
\ea \ee 

The coefficients $b_n$ and $\sigma_n$ satisfy {\bf(H)} and the linear growth condition \eqref{linear_growth} holds with a constant $C$ independent of $n$. As a consequence, for each $n\in \NN$, problem \eqref{scheme_nonlinear_stochastic_case_II} admits at least one solution $m^n$ and, denoting likewise the extension of $m^n$ in \eqref{time_interpolation} to an element in $C([0,T]; \P_1(\RR^d))$,  by   \eqref{secmom} and \eqref{equicontinuity_in_d2}, whose proofs can be reproduced without modifications and with constants independent of $n$,  the set $\{m^n \; | \; n\in \NN\}$ is relatively compact in $C([0,T]; \P_1(\RR^d))$. 

We have the following convergence result, assuming only {\bf(H)} and whose proof is almost identical to the previous one. 
\begin{theorem}\label{convergencia_under_H} Assume   ${\bf(H)}$  and that $\rho_n^2=o(h_n)$ and $h_n=o(\eps_n^2)$. Then, every limit point $m \in C([0,T]; \P_1(\RR^d))$ of $m^n$ {\rm(}there exists at least one{\rm)} solves $(FPK)$. In particular,  $(FKP)$ admits at least one solution.
\end{theorem}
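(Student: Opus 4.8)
The plan is to follow the proof of Theorem \ref{convergencia_1} verbatim up to the point where the local Lipschitz assumption {\bf(Lip)} was invoked, and then to reorganize the remaining estimates so as to exploit the regularization. By the relative compactness already recorded before the statement (which rests on Proposition \ref{second_moments_bounded}, Proposition \ref{masmammadapppqpqwppa} and Lemma \ref{compactness_in_P_1}, all of whose proofs carry over with $n$-independent constants), the sequence $m^n$ admits at least one limit point $m$; I pass to a subsequence, still denoted $m^n$, converging to $m$ in $C([0,T];\P_1(\RR^d))$. Fixing $\varphi \in C_0^\infty(\RR^d)$ and $t\in\,]0,T]$ with $t\in\,]t_{n'},t_{n'+1}]$, the telescoping identity \eqref{sumatelescopica} together with the increment computation \eqref{several_computations} — now carried out with $\Phi^{n,\ell,\pm}_{j,k}[m^n]$ and hence with the regularized generator $L_{b_n,\sigma_n,\varphi}[m^n]$ — yields the exact analogue of \eqref{escriturasimplificada}, namely
\[
\int_{\RR^d}\varphi\,\dd m^n(t_{n'}) = \int_{\RR^d}\varphi\,\dd m^n(0) + h_n\sum_{k=0}^{n'-1}\int_{\RR^d} L_{b_n,\sigma_n,\varphi}[m^n](x,t_k)\,\dd m^n(t_k)(x) + O\!\left(\tfrac{\rho_n^2}{h_n}+h_n\right),
\]
the $O$-term being uniform in $n$ because the bounds on $b_n,\sigma_n$ on $\supp\varphi$ are uniform in $n$ by \eqref{linear_growth}.

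The essential new point is that, since $b,\sigma$ satisfy only {\bf(H)}, the limiting generator $L_{b,\sigma,\varphi}[m](\cdot,t_k)$ need not be Lipschitz, so the Riemann-sum-to-integral step \eqref{replacement_integral_L_II} \emph{cannot} be performed with the original coefficients. Instead I keep the coefficients regularized while converting the sum. First I replace the measure argument $m^n$ by $m$ inside $L_{b_n,\sigma_n,\varphi}$, as in \eqref{replacement_integral_L_I}: because convolution against the probability density $\phi_{\eps_n}$ does not enlarge the modulus of continuity in the measure variable, and because by \eqref{secmom} all the $m^n(s),m(s)$ lie in a fixed compact subset of $\P_1(\RR^d)$, {\bf(H)}{\rm(i)} furnishes a single modulus of continuity for $b_n[\cdot],\sigma_n[\cdot]$ on $\supp\varphi\times[0,T]$, uniform in $n$; this replacement therefore costs only $\bar\omega_1\big(\sup_{t}d_1(m^n(t),m(t))\big)\to0$. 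Next, $L_{b_n,\sigma_n,\varphi}[m](\cdot,t_k)$ is Lipschitz with constant $O(1/\eps_n)$, since $b_n,\sigma_n$ satisfy \eqref{local_Lipschitz_inequality} with $C_K=C_K'/\eps_n$ and $\varphi$ has bounded derivatives; combining this with \eqref{distance_1_difference_lipschitz} and the equicontinuity estimate \eqref{equicontinuity_in_d2} exactly as in \eqref{replacement_integral_L_II} gives a per-step error $O(h_n^{3/2}/\eps_n)$, hence a total conversion error $O(\sqrt{h_n}/\eps_n)$.

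At this stage the right-hand side has become $\int_{\RR^d}\varphi\,\dd m^n(0)+\int_0^{t_{n'}}\int_{\RR^d} \hat L^{n}_{b_n,\sigma_n,\varphi}[m](x,s)\,\dd m^n(s)(x)\,\dd s$ up to an error $O\big(\rho_n^2/h_n+\sqrt{h_n}/\eps_n+h_n+\bar\omega_1(\sup_t d_1(m^n,m))\big)$, where $\hat L^{n}_{b_n,\sigma_n,\varphi}[m](\cdot,s):=L_{b_n,\sigma_n,\varphi}[m](\cdot,t_k)$ on $[t_k,t_{k+1})$. This is precisely where both hypotheses are consumed: $\rho_n^2=o(h_n)$ annihilates the $\rho_n^2/h_n$ term, while $h_n=o(\eps_n^2)$ forces $\sqrt{h_n}/\eps_n\to0$. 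To finish, I observe that $\hat L^{n}_{b_n,\sigma_n,\varphi}[m]\to L_{b,\sigma,\varphi}[m]$ uniformly on $\RR^d\times[0,T]$: the mollification errors $b_n[m]-b[m]$ and $\sigma_n[m]-\sigma[m]$ tend to $0$ uniformly on $\supp\varphi$ by uniform continuity of $b[m],\sigma[m]$ there (a consequence of {\bf(H)}{\rm(i)}) and the standard mollifier estimate, while the time-freezing error vanishes by uniform continuity in $t$. Since the integrands are uniformly bounded and $m^n(s)\to m(s)$ in $\P_1(\RR^d)$ for each $s$, dominated convergence produces $\int_0^{t}\int_{\RR^d} L_{b,\sigma,\varphi}[m]\,\dd m(s)\,\dd s$ in the limit; passing to the limit in the displayed identity (using $m^n(t_{n'})\to m(t)$ and $m^n(0)\to\bar m_0$) gives \eqref{solution_FP}, so $m$ solves $(FPK)$.

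The main obstacle — and the only genuinely new ingredient beyond Theorem \ref{convergencia_1} — is this quantitative balancing of the two scales: the regularization must be undone (so $\eps_n\to0$) without letting the resulting $O(1/\eps_n)$ Lipschitz constant destroy the $O(\sqrt{h_n}/\eps_n)$ Riemann-sum error, which is exactly what $h_n=o(\eps_n^2)$ guarantees. The accompanying structural change is that the conversion to a time integral must be carried out while the coefficients are still regularized (hence Lipschitz), the de-regularization being performed only afterwards, by uniform convergence.
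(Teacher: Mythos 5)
Your proposal is correct and follows essentially the same route as the paper's proof: the same telescoping identity with the regularized generator $L_{b_n,\sigma_n,\varphi}$, the same replacement of $m^n$ by $m$ at cost $\bar\omega_1(\sup_t d_1(m^n,m))$, the same Riemann-sum conversion with the $O(1/\eps_n)$ Lipschitz constant yielding the $O(\sqrt{h_n}/\eps_n)$ error (the paper's estimate \eqref{replacement_integral_L_III}), and the same final passage to the limit via uniform convergence of $\hat{L}_{b_n,\sigma_n,\varphi}[m]$ to $L_{b,\sigma,\varphi}[m]$. Your additional justifications (mollification not enlarging the modulus of continuity in the measure variable, and the explicit accounting of where $h_n=o(\eps_n^2)$ is consumed) merely flesh out details the paper leaves implicit.
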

\begin{proof} Arguing exactly as in the proof of Theorem \ref{convergencia_1}, and using the same notations, we have the existence of $m\in C([0,T]; \P_1(\RR^d))$ such that, up to some subsequence, $m^n\to m$ in $C([0,T]; \P_1(\RR^d))$. Moreover, for each $n \in \NN$ we have \small
\be\label{escriturasimplificada_1}
\int_{\RR^{d}} \varphi(x) \dd m^{n}(t_{n'})(x)= \int_{\RR^{d}} \varphi(x) \dd m^{n}(0)(x)+  h_n \sum_{k=0}^{n'-1} \int_{\RR^{d}}L_{b_n,\sigma_n,\varphi}[m^{n}](x,t_k) \dd m^{n}(t_{k})(x)+ O\left( \frac{\rho_n^2}{h_n} + h_n\right),
\ee \normalsize
where $L_{b_n,\sigma_n,\varphi}$ is given by \eqref{generator_L}, with $b$ and $\sigma$ replaced by $b_n$ and $\sigma_n$, respectively. Estimate \eqref{replacement_integral_L_I} still holds and \eqref{replacement_integral_L_II} changes to \small
 \be\label{replacement_integral_L_III}
\left|\int_{\RR^{d}}L_{b_n,\sigma_n,\varphi}[m](x,t_k) \dd \left(m^{n}(s)-m^{n}(t_{k})\right)(x) \right| \leq \frac{C}{\eps_n}d_{1}(m^{n}(s),m^{n}(t_k))\leq C' \frac{\sqrt{h_n}}{\eps_n} \hspace{0.4cm} \forall \; s \in [t_{k}, t_{k+1}),
\ee \normalsize
for some constants $C$ and $C'$ independent of $n$. Relation \eqref{escriturasimplificada_1} then gives 
\be\label{escriturasimplificada_caso_continuo_I}\ba{ll}
\int_{\RR^{d}} \varphi(x) \dd m^{n}(t_{n'})(x)=& \int_{\RR^{d}} \varphi(x) \dd m^{n}(0)(x)+  \int_{0}^{t_{n'}} \int_{\RR^{d}}\hat{L}_{b_n,\sigma_n,\varphi}[m](x,s) \dd m^n(s)(x) \dd s\\[8pt]
\; &  + O\left( \frac{\rho_n^2}{h_n} + \frac{\sqrt{h_n}}{\eps_n} +\bar{\omega}_1\left( \sup_{t\in [0,T]} d_1(m^n(t),m(t))\right)\right),\ea
\ee  
where $\hat{L}_{b_n,\sigma_n,\varphi}[m](x,s):= L_{b_n,\sigma_n,\varphi}[m](x,t_{k})$ for all  $x \in \RR^d,$ and  $s\in [t_{k}, t_{k+1})$. By {\bf(H)}{\rm(i)} we have that $\hat{L}_{b_n,\sigma_n,\varphi}[m](\cdot,\cdot) \to L_{b,\sigma,\varphi}[m](\cdot,\cdot)$ uniformly in $\RR^d\times [0,T]$ and, passing to the limit in \eqref{escriturasimplificada_caso_continuo_I}, we can conclude as in the  previous proof.  
\end{proof}

\begin{remark} In particular, Theorem \ref{convergencia_under_H}  yields a Peano type existence result for $(FPK)$.  We point out that more general existence results for the $(FPK)$ equation  are proven in the articles \cite{MR3086740,MR3113428}, by using purely analytical techniques. 
\end{remark}

\begin{remark}\label{remark_approximation_general_coefficients} 
{\rm(i)} In the deterministic case $\sigma \equiv 0$, the proof in \cite[Proposition 3.9]{MR3148086}  shows that \eqref{equicontinuity_in_d2} can be replaced by 
$$\sup_{n\in \NN} d_{1}(m^n(t),m^n(s) )\leq C|t-s| \hspace{0.3cm} \forall \;  t, s \in [0,T],$$
and, hence, the estimate \eqref{replacement_integral_L_III} can be improved to \small
$$\left|\int_{\RR^{d}}L_{b_n,\sigma_n,\varphi}[m](x,t_k) \dd \left(m^{n}(s)-m^{n}(t_{k})\right)(x) \right| \leq \frac{C}{\eps_n}d_{1}(m^{n}(s),m^{n}(t_k))\leq C' \frac{h_n}{\eps_n} \hspace{0.4cm} \forall \; s \in [t_{k}, t_{k+1}),$$
\normalsize
for some constants $C$ and $C'$ independent of $n$. As a consequence, the result in Theorem \ref{convergencia_under_H} holds true under the weaker assumption $h_n=o(\eps_n)$.\smallskip\\
{\rm(ii)} The approximation of the coefficients can also be useful in order to approximate the $(FPK)$ equation with coefficients $b$ and $\sigma$ defined almost everywhere w.r.t. the Lebesgue measure. In this case, in order to give a meaning to a solution $m$ of \eqref{solution_FP} one can require that  $m(t)$  should be absolutely continuous w.r.t. the Lebesgue measure for almost every $t\in [0,T]$. One can then consider coefficients $b^n$ and $\sigma^n$ which regularize $b$ and $\sigma$, but in general we can only expect $L^1$ convergence $L_{b^n,\sigma^n,\varphi}$ to $L_{b,\sigma,\varphi}$. In this case, the scheme \eqref{scheme_nonlinear_stochastic_case_I} should be modified in order to discretize the density of $m$ and a stronger compactness result, for example in $L^{\infty}$ endowed with the weak$^*$ topology,  should be proved for the constructed approximation $m^n$. As we will discuss in {\rm Remark \ref{uniqueness_mfg}(ii)}, this is exactly the situation in degenerate MFGs {\rm(}see {\rm\cite{MR3148086,CS15})}. 

\end{remark}
%
\section{Applications and Numerical simulations}\label{aplications}
We describe several applications where our scheme can be efficiently used to approximate the solution of the FPK equation. We consider first  two standard linear models. The first one consists  in a FPK equation where the underlying two-dimensional dynamics models a damped noisy harmonic oscillator. In this case, there is an explicit exact solution, which is helpful in order to test the scheme and compute the numerical errors. In the second linear model we consider a first order FPK equation, where the underlying dynamics describes a predator-prey model under the effect of  a periodic force that models seasonality. In this test we propose a simple modification of the scheme which allows us to simulate the long time behavior of the dynamics by considering  very  large time steps.

Next, we apply our scheme to solve  two non-linear models with $\sigma[m](x,t)\equiv \sigma I_{d}$  for some $\sigma\neq 0$ (where $I_d$ is the $d\times d$ identity matrix), but where $b[m](x,t)$ does not admit  an explicit expression and has to be approximated.  The approximation technique is similar to the one presented at the end of the previous sections, where the coefficients supposed to satisfy {\bf(H)} only.
In the first model we consider an example of the so-called MFG system with non-local interactions (see \cite{LasryLions07}).  In this case, the drift $b[m](x,t)$ is related to the value function of an optimal control problem starting at $x$ at time $t$, having  running and terminal costs depending on $\{m(s) \; ; s \in ]0,T[\}$ and $m(T)$, respectively. Therefore, as explained in Remark \ref{explication_explicito_implicito}, the proposed scheme is implicit. Our approximation is similar to the one in \cite{MR3148086,CS13,CS15} dealing with degenerate  MFG systems and where the authors prove the convergence when the state dimension  $d$ is equal to one. In our present non-degenerate setting, the theory developed in Section \ref{convergence_section} allows us to prove the convergence of the scheme in general space dimensions. In the second non-linear model, we consider a FPK equation where the velocity field $b[m](x,t)$  depends on the value function of an optimal control starting at $x$ at time $t$ with   running and terminal costs depending only on the value $m(t)$. This model, which seems to be new, is inspired by the Hughes model \cite{hughes2000flow} and could be used to model crowd motion in some ``panic'' situations. We prove that the related FPK equation admits at least one solution and we also provide a convergence result for the associated scheme.
%

\subsection{Linear case:  damped noisy harmonic oscillator}
We consider the numerical resolution of a FPK equation modeling a harmonic oscillator with damping coefficient $\gamma>0$ and noise coefficient  $\sigma >0$. The dynamics  is  described by the following two dimensional SDE in an interval $]0,T[$
\be\label{DHO}
\ba{rcl}
\dd X_1(t) &=& X_2(t)\dd t \\[4pt]
\dd X_2(t) &=&\left[ -X_1(t)-\gamma X_2(t)\right]\dd t +\sqrt{2\sigma} \dd W(t),\\[4pt]
(X_1(0), X_2(0))&=&(\bar{X}_1(0), \bar{X}_2(0)) \hspace{0.3cm} \mbox{with}\hspace{0.2cm} \mbox{Law}((\bar{X}_1(0), \bar{X}_2(0)))=\bar{m}_0 \in \P_2(\RR^2),
\ea
\ee
and $(\bar{X}_1(0), \bar{X}_2(0))$ independent of the one-dimensional Brownian motion $W$. 
The associated (degenerate) FPK equation is 
\be \label{FPKosc} 
\partial_t m -\sigma \partial_{x_2,x_2}m+\partial_{x_1}(x_2 m) -\partial_{x_2}((x_1+\gamma x_2)m)=0 \hspace{0.3cm} \mbox{in } \RR^2 \times ]0,T[,\hspace{0.3cm} m(0)=\bar{m}_0. 
\ee
Supposing that $\bar{m}_0:=\delta_{x_0}$ ($x_0\in \RR^2$), it is shown in \cite{ZMV99}  that the  solution  $m$ to \eqref{FPKosc} has a density, which has the following explicit expression \be\label{solex}
m(x,t)=\frac{\nu(x,t)}{ \int_{\RR^d} \nu(y,t) \dd y}, \; \; \; \mbox{where } \; \; \nu(x,t):= \frac{e^{\gamma t-s_{x_0}(x,t)/2\Delta(t)}}{2\pi \sqrt{\Delta(t)}},
\ee
with 
$$\ba{rl} s_{x_0}(x,t):=& a(t)(\psi(x,t)-\psi(x_0,0))^2+2H(t)\left[\psi(x,t)-\psi(x_0,0)\right]\left[\eta(x,t)-\eta(x_0,0)\right]\\[6pt]
\; & +b(t)(\eta(x,t)-\eta(x_0,0))^2,\\[6pt]
\Delta(t):=&a(t)b(t)-H(t)^2,
\ea$$
and 
$$
\ba{rl}
\psi(x,t):=&(x_1\mu_1-x_2)e^{-\mu_2 t},\quad \eta(x,t):=(x_1\mu_2-x_2)e^{-\mu_1 t},\\[6pt]
H(t):=&-\frac{2\sigma}{\mu_1+\mu_2}(1-e^{-(\mu_1+\mu_2)t}),\\[8pt]
a(t):=&\frac{\sigma}{\mu_1}(1-e^{-2\mu_1 t}),\quad
b(t):=\frac{\sigma}{\mu_2}(1-e^{-2\mu_2t}),\\[6pt]
\mu_1:=&-\frac{\gamma}{2}+(\frac{\gamma^2}{4}-1)^{\frac{1}{2}},\quad\mu_2:=-\frac{\gamma}{2}-(\frac{\gamma^2}{4}-1)^{\frac{1}{2}}.\\
\ea
$$ \vspace{0.2cm}

We apply our  scheme to approximate the solution of  \eqref{FPKosc} in the time interval  $[0,T]:= [0,2]$  with $\gamma=2.1$,  $\sigma=0.8$ and $\bar{m}_0:=\delta_{x_0}$ with $x_0:=(1,1)$. Since most of the support of the exact solution $m$ is contained in $\OO:=(-4,4)^2$, we consider the solution of our scheme restricted to this domain (which implies that the total mass is not conserved) in order to obtain an implementable method. An alternative would be to impose Neumann boundary conditions (see the next example) in order to maintain the total mass constant. However, in that case we loose the explicit expression \eqref{solex} for the exact solution.

%
Given $\rho$, $h=T/N>0$ ($N\in \NN$), and the  weights  $m_{i,k}$ ($i\in \ZZ^2$, $k=0, \hdots,N$),  defined  recursively by \eqref{scheme_linear_stochastic_case}, we set $\mathbf{m}_{\rho,h}(x,t):=m_{i,k}/\rho^2$ if $(x,t)\in E_i\times  [t_k,t_{k+1})$, which, for fixed $t$,  defines a density which is uniform on $E_i$. Let us set
\be \label{err}
\mathcal{E}_{\rho,h}:=\left[\frac{1}{K^2}\sum_i(\mathbf{m}_{\rho,h}(x_i,T)-m(x_i,T))^2\right]^{\frac{1}{2}},\ee
where $K$ is the total number of grid nodes. The value  $\mathcal{E}_{\rho,h}$ measures a discrete $L^2$ error  between the density of $m$ and its approximation.  Note that the convergence theory presented in Section \ref{convergence_section} does not imply that $\mathcal{E}_{\rho,h}$ should tend to $0$ as $\rho$ and $h$ tend to zero. Nevertheless, we observe this behavior numerically. Indeed, for $\rho=0.1$, $0.05$, $0.025$ we set $h=\rho/2$ and compute $\mathcal{E}_{\rho,h}$ for the corresponding numerical approximations. In the first two columns of Table \ref {tb:test1} we show the selected  parameters. In the third and fourth columns we show the associated error  $\mathcal{E}_{\rho,h}$  and the  convergence rate, respectively.  
 In Figure \ref{TestDO},  we  display on the left the contour level set of $\mathbf{m}_{\rho,h}(\cdot,t)$ at the level $0.2$, defined as $\Gamma_t:=\{x \in \OO \; ; \; \mathbf{m}_{\rho,h}(x,t)=0.2\}$, and computed at  times $t=0.2$, $0.5$, $1$, $2$   with $\rho=0.025$. To the right in the same figure,  we provide a  3D view of the numerical solution computed  at the final time $T=2$ with $\rho=0.025$.
Even in this simple linear setting, this test shows two main advantages of our scheme. Compared to  explicit finite difference schemes, the  discretization we propose is  stable, explicit and, at the same time, allows large time steps. Moreover, it can handle initial data with very weak regularity  (a Dirac mass in this particular case).
 
 \begin{table}[ht!]
\begin{center}
\caption{Damped Oscillator: $\mathcal{E}_{\rho,h}$ errors and convegence rate }
\label{tb:test1}
\begin{tabular}{|c|c|c|c|}\hline
$\rho$  &$h$  & $\mathcal{E}_{\rho,h}$  & convergence rate\\ \hline
$0.1$&$0.05$&$1.02 \cdot 10^{-2}$& --  \\ 
$0.05$&$0.025$&$5.37 \cdot 10^{-3}$& $0.93 $ \\
$0.025$&$0.0125$&$2.45 \cdot 10^{-3}$&$ 1.12  $\\  \hline
\end{tabular}
\end{center}
\end{table}
\begin{figure}[htp]
\begin{center}
  \includegraphics[width=2.5in]{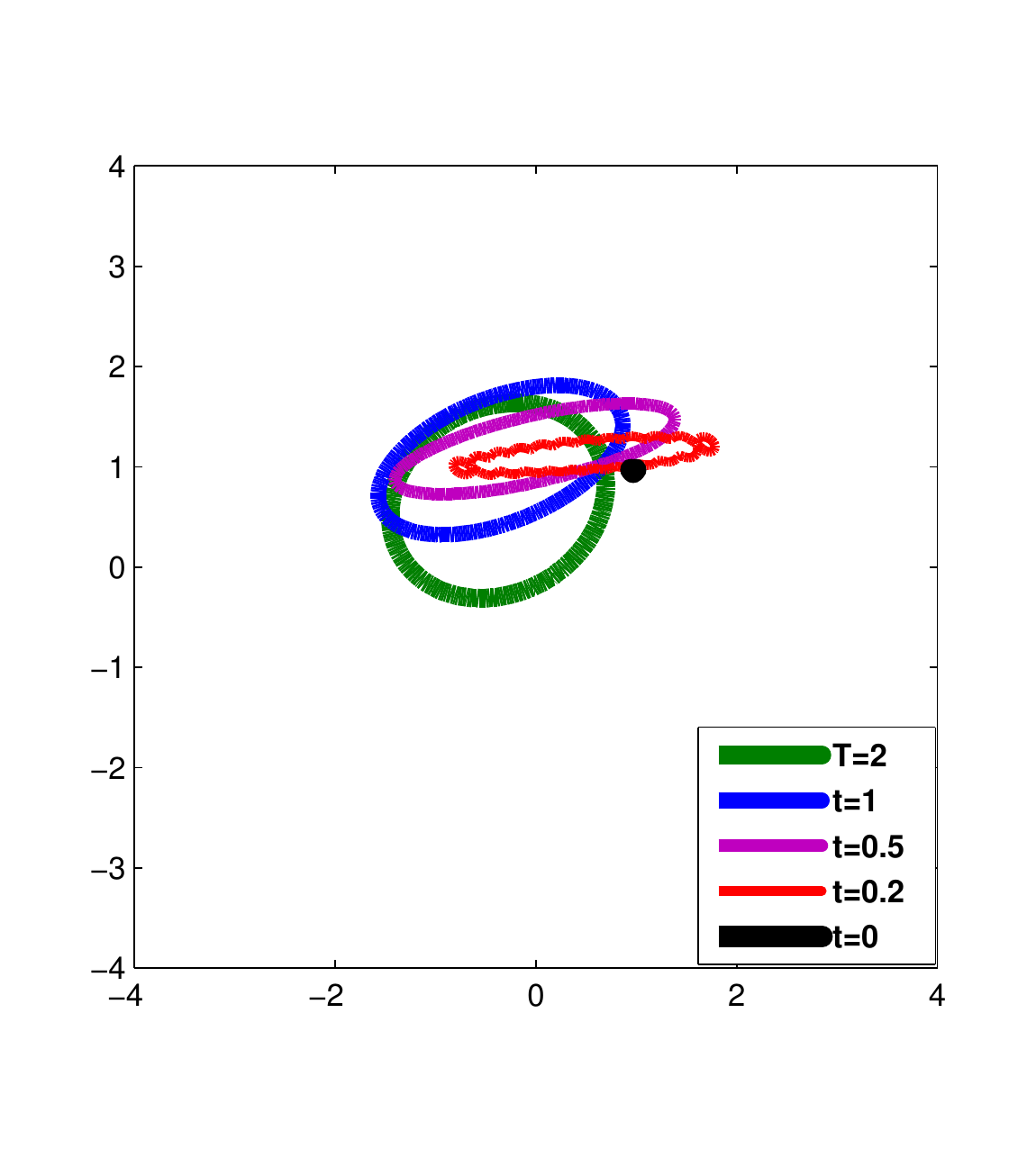}
 \includegraphics[width=3.5in]{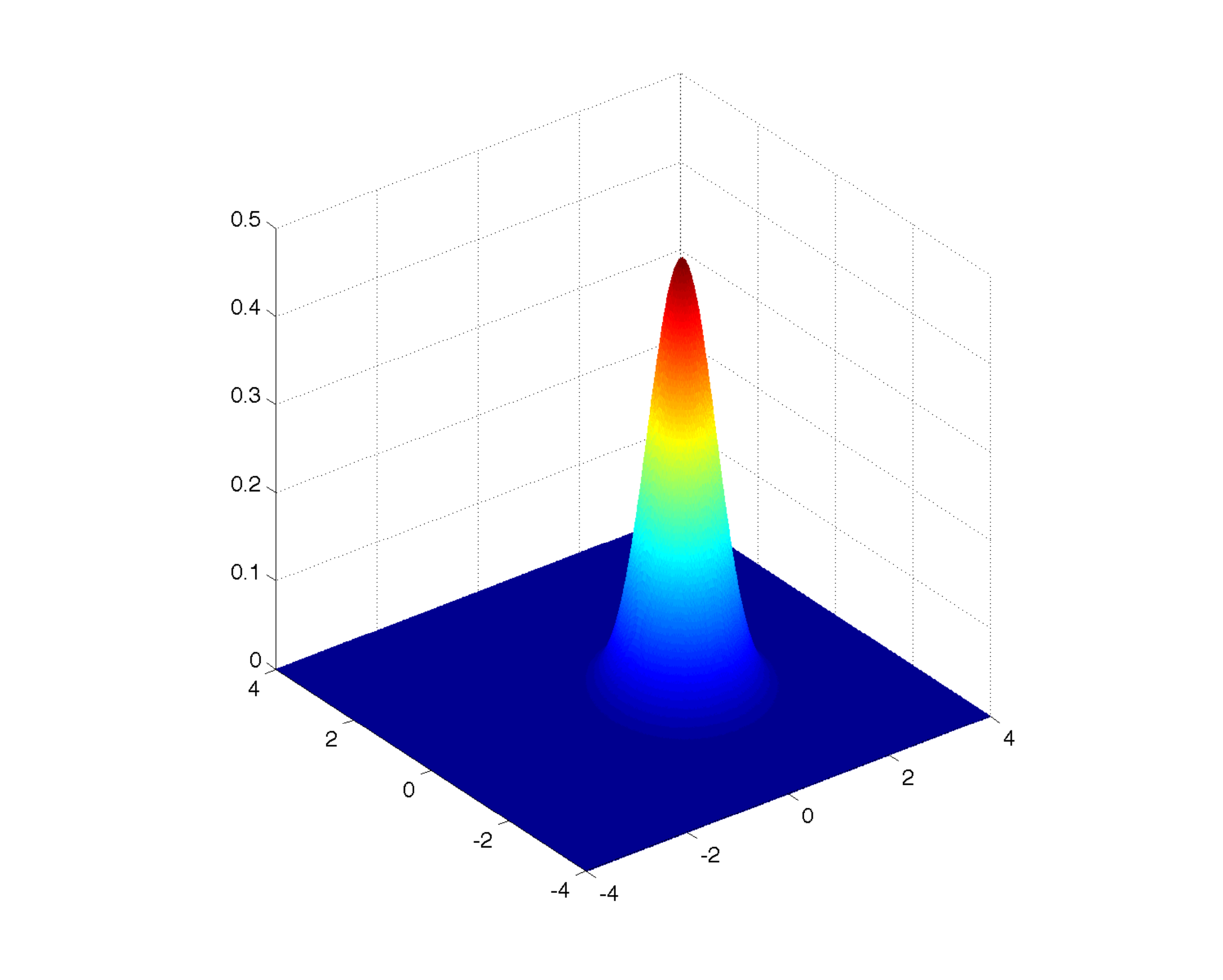}
\caption{{ Damped oscillator: On the left we display the contour level sets for $\mathbf{m}_{\rho,h}(x,t) =0.2$ at times $t=0.2$, $0.5$, $1$ and $2$.  The black point corresponds to $(1,1)$, which is the point where the initial mass is concentrated. On the right, we display    a  3D view  of the  numerical solution at time $T=2$  computed with $\rho=0.025$.}}
\label{TestDO}
\end{center}
\end{figure}

\subsection{Linear and deterministic case: Lotka-Volterra model with seasonality}
We consider now a Lotka-Volterra type system that models the time evolution of a two-species predator-prey system under the effect of seasonality (see \cite{KS99}).   The number of predators and preys, as functions of time, are denoted by $U$ and $V$, respectively.  The dynamics of $(U,V)$ in the time interval $[0,+\infty[$ is  described by  (omitting the initial conditions)
\be\label{lotka_volterra_UV}
\ba{rcl} 
\dd U(t) &=&\left[ -U(t) +U(t) V(t)\right]\dd t\\[4pt]
\dd V(t)&=& \left[(1+\lambda \sin(t))V(t) -U(t)V(t)-\gamma V(t)^2\right]\dd t,			
\ea 
\ee
where $\lambda \geq 0$ and $\gamma>0$.  The predators have death and growth  rates  equal to 1. The  preys have  death rates equal to 1, due to the presence of predators,   but they  are also affected by self-limitations effects  (due, for instance, to resource limitation)  which are modeled   by the term $\gamma V(t)^2$. 
The growth rate of the preys has periodic variations  $t\mapsto 1+\lambda\sin(t)$  to model seasonality. 
If $\lambda =0$, system \eqref{lotka_volterra_UV} has a unique non trivial positive equilibrium, while in the seasonal case $\lambda>0$ the equilibrium is shown to be a periodic orbit around the origin. We refer the reader to  \cite{KS99} for analytical details on this model. The system can be simplified by the logarithmic transformation $X_1=\ln U, X_2=\ln V$
into
\be\label{LVb}
\ba{rcl} 
\dd X_1(t) &=&\left[ -1 +e^{X_2(t)}\right]\dd t\\[4pt]
\dd X_2(t) &=& \left[1+\lambda\sin(t)-e^{X_1(t)}-\gamma e^{X_2(t)}\right]\dd t.		
\ea 
\ee
Note that the coefficients defining \eqref{LVb} do not satisfy the growth assumption {\bf(H){\rm(ii)}}. Despite this fact, we will show next that the scheme we propose approximates correctly the associated FPK equation.
\subsubsection{Numerical simulation}
We  numerically solve the associated first order linear FPK equation (or  continuity equation) with $\lambda=0.05$ and $\gamma=0.05$ on the bounded domain $\OO \times [0,T]:=[-1.5,1,5]^2\times [0,150]$
and with an absolutely continuous initial condition with density given by
$$ \bar{m}_0(x)=\frac{\nu(x)}{\int_{\OO}\nu(y)\dd y}  \mathbb{I}_{\OO}(x) \; \, \;  {\rm with} \; \;   \nu(x_1,x_2):= e^{\frac{-(x_1-0.4)^2-(x_2-0.4)^2}{0.05}},$$
and $\mathbb{I}_{\OO}(x)=1$, if $x\in \OO$, and $\mathbb{I}_{\OO}(x)=0$, otherwise. Since we consider a bounded space domain, we complement the FPK equation with an homogeneous Neumann boundary condition which, in terms of the underlying characteristics, means that  trajectories are reflected once they touch the boundary. As a consequence, the total mass is preserved during the evolution.  Accordingly,  at the level of  the fully-discrete scheme we reflect the discrete characteristics. This modification of the scheme is detailed discussed in \cite{carlini2016PREPRINT}, in the context of Hughes model for pedestrian flow (see  \cite{hughes2000flow}). Let us point out, that a theoretical study of the convergence  of the resulting scheme has not yet been established and remains as an interesting subject of future research. 

Since the time horizon $T=150$ is long, in order to allow large time steps and maintain the accuracy of the numerical method we  modify  our scheme in the following way. We define  a second  time step $\delta>0$, such that $h=P\delta$, with $P\in  \NN$. This new time step is used to compute the discrete flow \eqref{discrete_trajectories_deterministic_case}, at each node $x_i$ on each  time interval $[t_k,t_{k+1}]$ of size $h$, in the following way:
$$\Phi_{i,k}:=z_{k}^P(x_i)
$$ where $z_{k}^P(x_i)$  is the discrete trajectory computed after $P$ iterations of the Euler scheme with time step $\delta$:   $z_k^0=x_i$ and
$z_{k}^{p+1}(x_i)=z_{k}^p(x_i)+\delta b(z_{k}^p(x_i),t_k+p\delta)$   ($p=0,\hdots,P-1$), with 
\be\label{drift_lotka_volterra}
b(x,t):= \left( -1 +e^{x_2}, 1+\lambda\sin(t)-e^{x_1}-\gamma e^{x_2}\right).
\ee  
Defining $\mathbf{m}_{\rho,h}$ as in the previous example, in  Figure \ref{Test2} we show the time averaged  density computed on the time interval $I_T=[100,150]$ by the formula $\overline{ \mathbf{m}}^{\rho,h}(x)=\frac{1}{|I_T|}\sum_{t_k\in I_T} \mathbf{m}^{\rho,h}(x,t_k)h$   with $\rho=0.015$, $h=8\rho$ and $P=16$. 

Let us point out that in \cite{NADIGA08} the authors implement a path integration method for  a FPK equation associated to a stochastic Lotka-Volterra system whose drift $b$ is 
given by \eqref{drift_lotka_volterra}.  Due to  the absence of the diffusion term in    system \eqref{LVb},   we observe that the   approximated time average density in Figure \ref{Test2}  is more concentrated  than the one displayed in \cite{NADIGA08}.  On the other hand, the shapes of the  periodic orbits are very similar in both cases.
\begin{figure}[ht]
\begin{center}
\includegraphics[width=6cm]{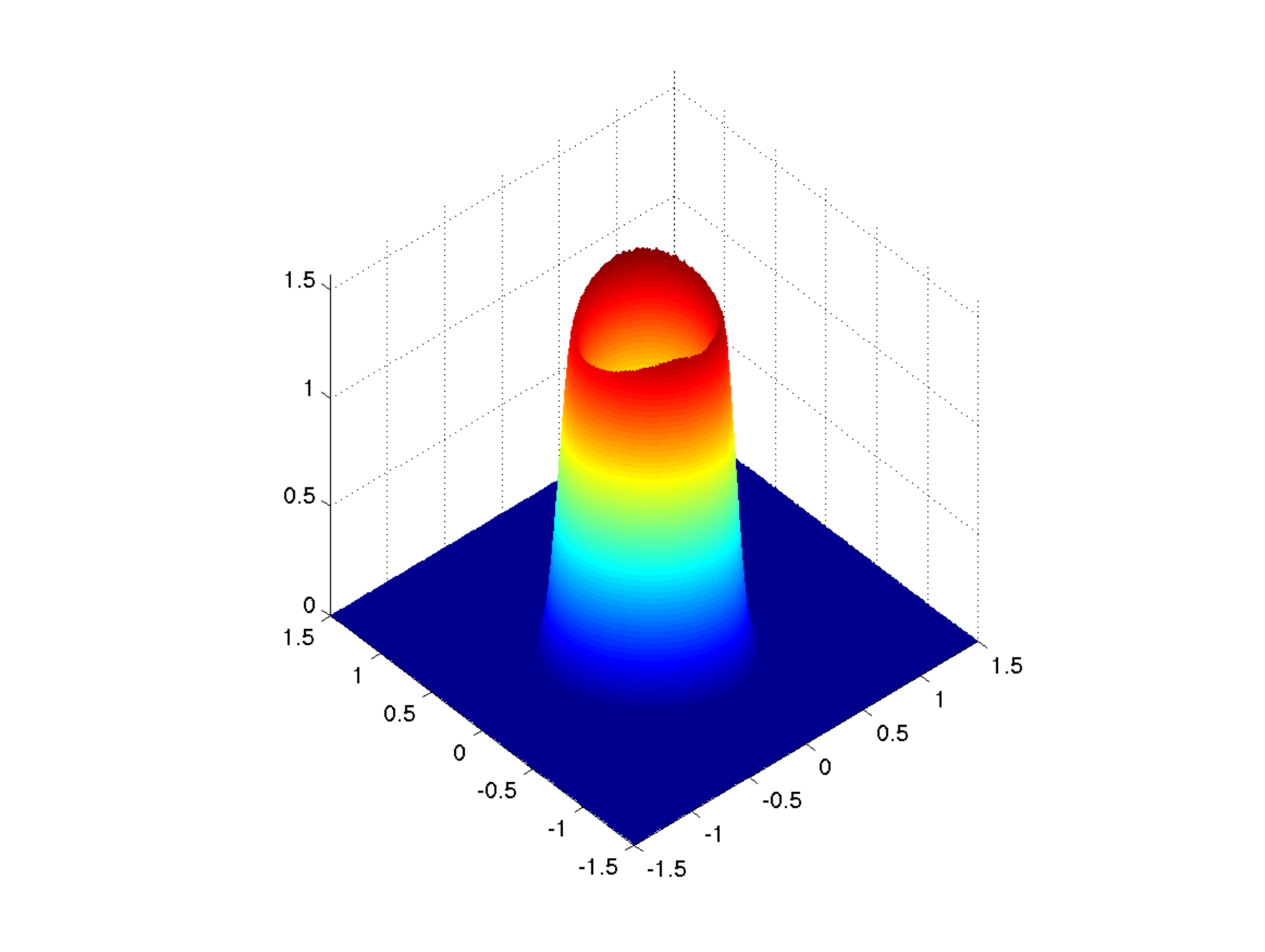}\includegraphics[width=6cm]{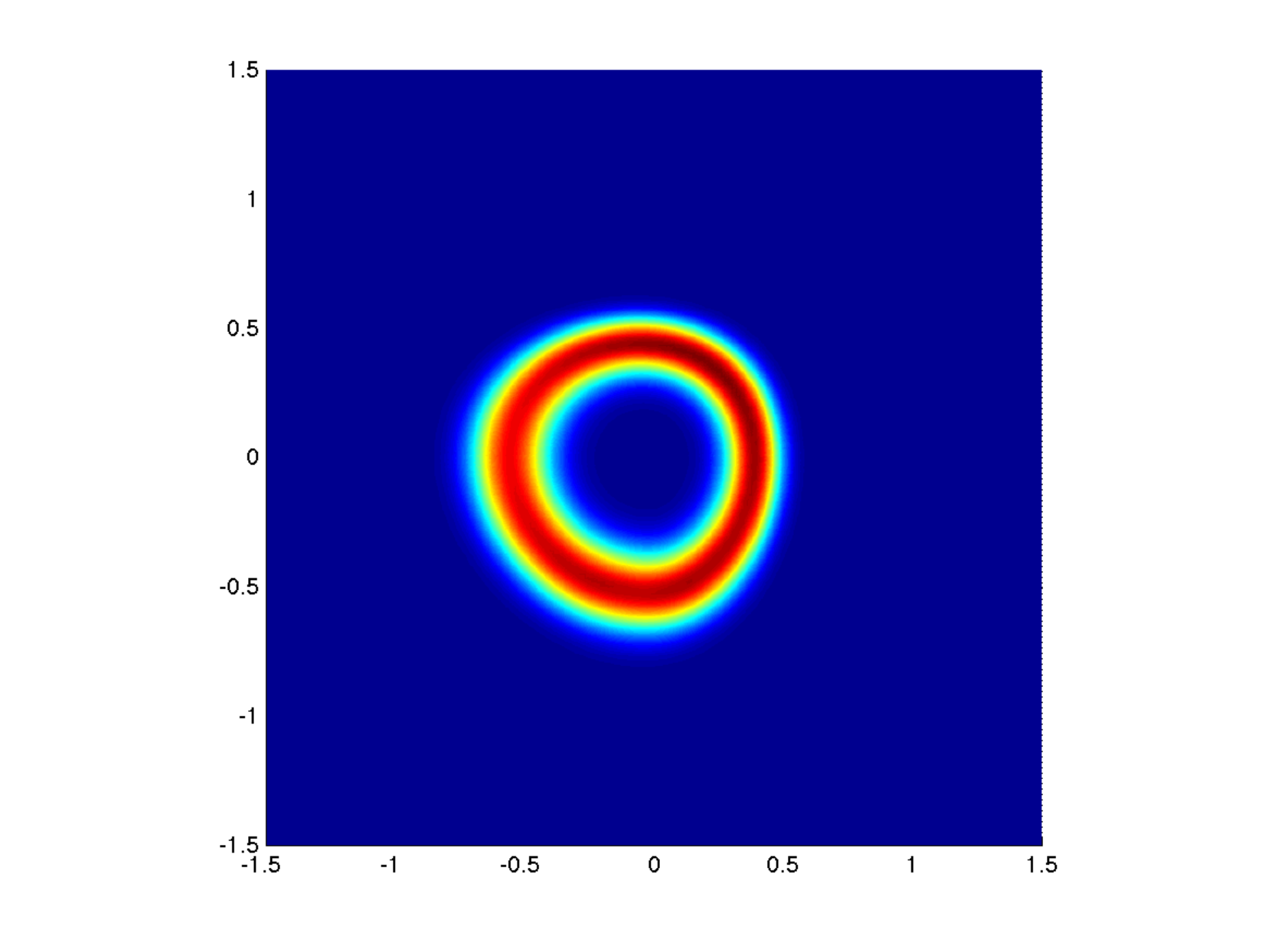}
{\caption{Time averaged solution $\overline{ \mathbf{m}}_{\rho,h}(x)$ computed  with $\rho=0.015,h=8\rho,P=16$, using parameters  $\lambda=0.05$ and $\gamma=0.05$.
\label{Test2}}}
\end{center}
\end{figure}


\subsection{Mean Field Games as a non-linear implicit model}\label{subsection_mfg} We consider here the MFG system 
\small
\be\ba{rcl}\label{MFGimplicit} 
-\partial_{t} v   -  \frac{\sigma^2}{2} \Delta v    + \frac{1}{2}|\nabla v|^2  &=& F(x, m(t)) \;  \;    \hbox{in } \RR^{d}\times (0,T), \\[6pt]
\partial_{t} m  -\frac{\sigma^2}{2}\Delta m   -\mbox{div} \big( \nabla v m \big) &=&0 \;  \; \; \hbox{in } \RR^{d}\times (0,T), \\[6pt]
v(x,T)= G(x, m(t)) \; \;   \mbox{for } x \in \RR^{d}, &\;&  \, \; \; m(0)=\bar{m}_0(\cdot) \in \P_{2}(\RR^{d}),
\ea
\ee \normalsize
where $\sigma \neq 0$ and  $F$, $G: \RR^d \times \P_{1}(\RR^d) \to \RR$ are continuous, twice differentiable w.r.t. the space variable,  and  satisfy that there exists a constant $c>0$ such that for $\psi= F, G$
\be\label{assumptions_F_G}
\sup_{ x\in \RR^d, \mu \in \P_{1}(\RR^d) }  \left(| \psi(x,\mu)| + |\nabla_{x} \psi(x,\mu)| +|\nabla_{xx}^2 \psi(x,\mu)|\right)  \leq c.
\ee

System \eqref{MFGimplicit} is a particular instance of a generic class of models introduced by Lasry and Lions in \cite{LasryLions06i,LasryLions06ii,LasryLions07} that characterize Nash equilibria of stochastic differential games with an infinite number of players. In order to explain the intuition behind \eqref{MFGimplicit}, for $m \in C([0,T];\P_1(\RR^d))$   consider  the HJB equation 
\be\ba{rcl}\label{HJBmu} 
-\partial_{t} v   -  \frac{\sigma^2}{2} \Delta v    + \frac{1}{2}|\nabla v|^2  &=& F(x, m(t)) \;  \;    \hbox{in } \RR^{d}\times (0,T), \\[6pt]
v(x,T)&=& G(x, m(T))\; \;   \mbox{for } x \in \RR^{d}.
\ea\ee
Standard results in stochastic control (see e.g. \cite{MR2179357}) imply that the  unique solution $v[m]$ of \eqref{HJBmu} can be represented as 
\be\label{stochastic_optimal_control_problem_mfg}
v[m](x,t):= \inf_{\alpha} \; \EE\left( \int_{t}^{T}\left[ \half |\alpha(s)|^2 + F(X^{x,t,\alpha}(s),m(s)) \right] \dd s  + G(X^{x,t,\alpha}(T),m(T)) \right),
\ee
where the expectation $\EE$ is taken in a complete probability space $(\Omega,\F, \PP)$ on which an $r$-dimensional Brownian motion $W$ is defined, the $\RR^{d}$-valued processes $\alpha$ are adapted to the  natural filtration generated by $W$, completed with the $\PP$-null sets, and they satisfy $\EE\left(\int_{0}^{T} |\alpha(t)|^2 \dd t \right)<\infty$, and $X^{x,t,\alpha}$ is defined as the solution of
\be\label{controlled_equation_mfg}
\dd X(s)= \alpha(s) \dd s + \sigma \dd W(s) \, \; s\in (t,T),  \hspace{0.8cm} X(t)=x.
\ee
The optimization problem in \eqref{stochastic_optimal_control_problem_mfg} can be interpreted in terms of a generic small agent whose state is $x$ at time $t$ and optimizes a cost depending on the future distribution of the agents $\{m(s) \; ; \; s\in ]t,T]\}$. 
The solution $v[m]$ of \eqref{HJBmu} is classical (see e.g. \cite{Cardialaguet10} where the proof is based upon the Hopf-Cole transformation) and so, by a formal  verification argument (see e.g. \cite{MR2179357}), the optimal trajectory for $v[m](x,t)$ in \eqref{stochastic_optimal_control_problem_mfg} is given by the solution $X^{x,t}$ of 
\be\label{mckean_vlasov_mfg}
\dd X(s)= -\nabla_{x} v[m]\left(X(s),s \right)\dd s + \sigma \dd W(s)\, \; s\in (t,T),  \hspace{0.8cm}  X(t)=x, 
\ee
and the optimal control $\alpha$ is given in feedback form $\alpha(x,t)=-\nabla_{x} v[m]\left(x,t\right)$. Thus, if all the players, distributed as $m_0$ at time $0$, act optimally according  to this feedback law, then the evolution of $m_0$ will be described by the FPK equation 
$$
\partial_{t} \mu  -\frac{\sigma^2}{2}\Delta \mu   -\mbox{div} \big( \nabla v[m] \mu \big)  = 0 \; \; \; \mbox{in } \; \RR^d\times (0,T) , \hspace{0.3cm} \mu(0)=m_0,
$$
and the equilibrium condition reads $m=\mu$, i.e. 
\be\label{fpk_mfg_implicit}
\partial_{t} m  -\frac{\sigma^2}{2}\Delta m   -\mbox{div} \big( \nabla v[m] m \big)  = 0 \; \; \; \mbox{in } \; \RR^d\times (0,T), \hspace{0.3cm} m(0)=m_0.
\ee
The equilibrium equation \eqref{fpk_mfg_implicit} is a particular instance of $(FKP)$ with $r=d$, $\sigma_{ij} =\sigma$ if $i=j$ and $0$ otherwise, and 
\be\label{definition_b_mfg}
b[\mu](x,t):= - \nabla v[\mu](x,t) \hspace{0.6cm} \forall \; (\mu,x,t) \in C([0,T]; \P_1(\RR^d))\times \RR^d \times [0,T], 
\ee
which depends on $\mu$ non-locally in time through $\{\mu(s) \; ; \; s\in (t,T]\}$ by \eqref{stochastic_optimal_control_problem_mfg} (with $m$ replaced by $\mu$). 
 
Let us now recall some properties of $v$ that allow to check assumption {\bf(H)} for $b$. Note that \eqref{stochastic_optimal_control_problem_mfg}, assumption \eqref{assumptions_F_G} and standard estimates for the solutions of the controlled SDE \eqref{controlled_equation_mfg} imply that $v$ is bounded and continuous. Moreover, $v$ is 
 uniformly semiconcave w.r.t. the space variable (see e.g. \cite{CannSinesbook} and \cite[Chapter 4]{MR2179357}), i.e. there exists $c>0$, independent of $t\in [0,T]$ and $\mu \in C([0,T];\P_1(\RR^d))$, such that  for all $x\in \RR^d$, $\mu \in \P_1(\RR^d)$ and $t\in [0,T]$,
\begin{equation}\label{semiconcavity_1}
v[\mu](x+h,t) -2v[\mu](x,t) +  v[\mu](x-h,t) \leq c|h|^2 \hspace{0.5cm} \forall \; h \in \RR^d,
\end{equation}
or equivalently, since $v[\mu](\cdot, t)$ is differentiable, there exists a constant $c>0$, independent of $t\in [0,T]$ and $\mu \in \P_1(\RR^d)$, such that
\be\label{semoconcavity_2_with_derivative}
v[\mu](x+h,t) \leq v[\mu](x,t) + \nabla_{x} v[\mu](x,t)\cdot h+ c |h|^2 \hspace{0.4cm} \forall \; h \in \RR^d, \; t\in [0,T].
\ee
In addition, the uniform Lipschitz property for $F(\cdot, \mu)$ and for  $G(\cdot, \mu)$  and formulation \eqref{stochastic_optimal_control_problem_mfg} imply, using again the stability results for the solutions of \eqref{controlled_equation_mfg} in terms of the initial condition, that 
\be\label{uniform_Lipschitz_property_v}\sup_{ t\in [0,T], \; \mu \in C([0,T];\P_{1}(\RR^d)) }   \|\nabla_{x}v[\mu](\cdot,t)\|_{\infty}  < \infty.\ee
As a consequence,   the continuity of $v$ yields that for any $(\mu_n,x_n,t_n) \to (\mu,x,t)$ we have that any limit point $p$ of $\nabla_{x} v[\mu_n](x_n,t_n)$ (there exists at least one by \eqref{uniform_Lipschitz_property_v})  must satisfy 
$$
v[\mu](x+h,t) \leq v[\mu](x,t) + p\cdot h+ c |h|^2 \hspace{0.4cm} \forall \; h \in \RR^d, \; t\in [0,T],
$$
and so $p=\nabla_{x} v[\mu](x,t)$ by \cite[Proposition 3.3.1 and Proposition 3.1.5(c)]{CannSinesbook}. Therefore,   $b$, defined in \eqref{definition_b_mfg}, is continuous. Since  \eqref{uniform_Lipschitz_property_v} implies that $b$ is bounded, we have that $b$ and $\sigma$ satisfy ${\bf(H)}$. Moreover, by \eqref{HJBmu} and the fact that $\nabla v[\mu]$ is bounded (independently of $\mu$), standard results for parabolic equations imply that $b$ and $\sigma$ also satisfy {\bf(Lip)}. 

Consequently, the results of Sections \ref{fully_discrete_scheme_section} and \ref{convergence_section} are applicable to \eqref{mckean_vlasov_mfg}. However, from the numerical point of view, we cannot implement the fully-discrete scheme directly with $b$, because we do not have an explicit expression for this vector field, which depends on the value function $v$. To overcome this difficulty, we argue as at the end of Section \ref{convergence_section}, where we approximate $b$ and $\sigma$ satisfying {\bf(H)} by coefficients which are locally Lipschitz, and approximate $b$ by a sequence of computable vector fields. We consider a Semi-Lagrangian scheme for the solution of \eqref{HJBmu} with $m$ replaced by $\mu$. Given $\rho>0$, $h=T/N>0$, with $N\in \mathbb{N}$,  and $\mu \in C([0,T];\P_1(\RR^d))$ we first define $v^{\rho,h}[\mu]$ in $\G_{\rho} \times \{0, \hdots, N\}$ recursively as  \small
\be\label{semi_lagrangian_scheme_v_mu}
\ba{rcl} 
v^{\rho,h}_{i,k}&=& \inf_{\alpha \in \RR^d} \left\{ \frac{h}{2}|\alpha|^{2} + \frac{1}{2d}\sum_{\ell=1}^{d}\left(I[v^{\rho,h}_{\cdot,k+1}](x_{i} + h \alpha +\sigma \sqrt{hd}e_{\ell})+I[v^{\rho,h}_{\cdot,k+1}](x_{i} + h \alpha-\sigma \sqrt{hd}e_{\ell}) \right)\right\}\\[6pt]
\; & \; & \hspace{1.2cm}+ hF(x_i,\mu(t_k)) \hspace{0.4cm} \forall \; k=0, \hdots, N-1,\\[6pt]
v^{\rho,h}_{i,N}&=& G(x_i,\mu(T)),
\ea
\ee \normalsize
where $\{e_\ell \; ; \; \ell=1, \hdots, d\}$ is the canonical basis of $\RR^d$, and we have omitted the $\mu$ dependence of $v^{\rho,h}$. We then define  $v^{\rho,h}: C([0,T]; \P_{1}(\RR^d)) \times \RR^d \times [0,T] \to \RR$ by  
$$
v^{\rho,h}[\mu](x,t)=  I[v^{\rho,h}_{\cdot,k}[\mu]](x,t_k) \hspace{0.5cm} \mbox{if } \; t\in [t_{k}, t_{k+1}[.
$$
In order to get a function differentiable w.r.t. the space variable, given $\eps>0$ and $\phi \in C^{\infty}(\RR^d)$, non-negative and such that $\int_{\RR^{d}} \phi(x)\dd x=1$, let us set $\phi_\eps(x):= \frac{1}{\eps^d} \phi(x/\eps)$. We define $v^{\rho,h,\eps}: C([0,T]; \P_{1}(\RR^d)) \times \RR^d \times [0,T] \to \RR$ by 
$$
v^{\rho,h,\eps}[\mu](\cdot, t):= \phi_{\eps} \ast v^{\rho,h}[\mu](\cdot,t) \hspace{0.3cm} \forall \; t\in [0,T].
$$
In \cite[Lemma 3.2 (i)]{CS15} it is shown that $v^{\rho,h,\eps}[\mu](\cdot, t)$ is Lipschitz, uniformly in $(\rho,h,\eps,\mu,t)$  which shows the bound \eqref{uniform_Lipschitz_property_v} for $v^{\rho,h,\eps}$. Using that $v^{\rho,h}$ satisfies a discrete semiconcavity property (see  \cite[Lemma 3.1 (ii)]{CS15}), by \cite[Lemma 4.3 and Remark 4.4]{MR3180719}   there exists a constant $c>0$,  independent of $(\rho,h,\eps,\mu,t)$, such that  $v^{\rho,h,\eps}[\mu](\cdot, t)$ satisfies the following weak semiconcavity property
\be\label{weak_semiconcavity_mfg}
\left(\nabla_{x} v^{\rho,h,\eps}[\mu](y,t)-\nabla_{x} v^{\rho,h,\eps}[\mu](x,t)\right)\cdot (y-x) \leq c\left( |y-x|^{2} + \frac{\rho^2}{\eps^2}\right).
\ee
Using the previous ingredients, we can prove the following result.
%
\begin{proposition}\label{uniform_convergence_derivatives_mfg} Consider sequences $\rho_n$, $h_n$ and $\eps_n$ of positive numbers converging to $0$ and such that  $\frac{\rho_n^2}{h_n}\to 0$ and $\rho_{n} =o(\eps_n)$.  Then, for every sequence $\mu_n \in C([0,T]; \P_1(\RR^d))$ converging to $\mu$ we have that $v^{\rho_n,h_n,\eps_n}[\mu_n]$ and $\nabla_{x} v^{\rho_n,h_n,\eps_n}[\mu_n]$ converge to $v[\mu]$ and $\nabla_{x} v[\mu](y,t)$, respectively, uniformly over compact subsets of $\RR^d\times [0,T]$. 
\end{proposition}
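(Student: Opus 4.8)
The plan is to prove the statement in three stages, reducing everything to the convergence of the Semi-Lagrangian scheme for the Hamilton--Jacobi--Bellman equation \eqref{HJBmu} and to a superdifferential characterization of $\nabla_{x}v[\mu]$.

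\emph{Stage 1 (convergence of the value function).} First I would show that $v^{\rho_n,h_n}[\mu_n]\to v[\mu]$ locally uniformly on $\RR^d\times[0,T]$. The scheme \eqref{semi_lagrangian_scheme_v_mu} is monotone (the $\mathbb{Q}_1$ interpolation $I[\cdot]$ has weights $\beta_i$ with $0\le\beta_i\le1$ and $\sum_i\beta_i=1$, hence produces convex combinations of nodal values), stable (because $F,G$ are bounded by \eqref{assumptions_F_G}, so the $v^{\rho,h}_{i,k}$ are uniformly bounded) and, under $\rho_n^2/h_n\to0$, consistent with \eqref{HJBmu}. Since $F$ and $G$ are continuous and, by \eqref{assumptions_F_G}, uniformly Lipschitz in $x$, the convergence $\mu_n\to\mu$ in $C([0,T];\P_1(\RR^d))$ forces the data $F(\cdot,\mu_n(t_k))$ and $G(\cdot,\mu_n(T))$ to converge uniformly to the limit data. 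The classical convergence theory for monotone, consistent and stable schemes (see \cite{CS15} for this particular scheme) then yields a locally uniform limit which, being a viscosity solution of \eqref{HJBmu}, coincides with the unique classical solution $v[\mu]$; hence the whole sequence converges.

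\emph{Stage 2 (convergence of the mollified value function).} Since $v^{\rho_n,h_n,\eps_n}[\mu_n]=\phi_{\eps_n}\ast v^{\rho_n,h_n}[\mu_n]$, I would split
\[
v^{\rho_n,h_n,\eps_n}[\mu_n]-v[\mu]=\phi_{\eps_n}\ast\bigl(v^{\rho_n,h_n}[\mu_n]-v[\mu]\bigr)+\bigl(\phi_{\eps_n}\ast v[\mu]-v[\mu]\bigr).
\]
On a compact set $K$ the first term is controlled by the supremum of $|v^{\rho_n,h_n}[\mu_n]-v[\mu]|$ over $K+B(0,\eps_n)$, which tends to $0$ by Stage 1, while the second term is bounded by the modulus of continuity of $v[\mu]$ evaluated at $\eps_n$ and tends to $0$ since $\eps_n\to0$ and $v[\mu]$ is uniformly continuous on compacts. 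Thus $v^{\rho_n,h_n,\eps_n}[\mu_n]\to v[\mu]$ locally uniformly, which is the first claimed convergence.

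\emph{Stage 3 (convergence of the gradients).} By \cite[Lemma 3.2(i)]{CS15} the maps $\nabla_{x}v^{\rho_n,h_n,\eps_n}[\mu_n]$ are uniformly bounded, so for any $(x_n,t_n)\to(x,t)$ the vectors $\nabla_{x}v^{\rho_n,h_n,\eps_n}[\mu_n](x_n,t_n)$ admit a convergent subsequence, say to $p$. Writing $g_n:=v^{\rho_n,h_n,\eps_n}[\mu_n](\cdot,t_n)$ and integrating the weak semiconcavity \eqref{weak_semiconcavity_mfg} along the segment $s\mapsto x_n+sh'$, splitting the integral at a radius $s_{0,n}\to0$ and bounding the near-origin part by the uniform Lipschitz constant, I obtain for every fixed $h'$
\[
g_n(x_n+h')-g_n(x_n)-\nabla_x g_n(x_n)\cdot h'\le \tfrac{c}{2}|h'|^2+O\!\left(s_{0,n}|h'|+\tfrac{\rho_n^2}{\eps_n^2}\,\bigl|\ln s_{0,n}\bigr|\right).
\]
Because $\rho_n=o(\eps_n)$, the error term vanishes as $n\to\infty$ (take e.g. $s_{0,n}=\rho_n^2/\eps_n^2$). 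Passing to the limit with Stage 2 gives $v[\mu](x+h',t)-v[\mu](x,t)-p\cdot h'\le \tfrac{c}{2}|h'|^2$ for all $h'$, i.e.\ $p\in D^{+}v[\mu](x,t)$; since $v[\mu](\cdot,t)$ is differentiable, \cite[Proposition 3.3.1 and Proposition 3.1.5(c)]{CannSinesbook} force $p=\nabla_{x}v[\mu](x,t)$. As every subsequential limit equals $\nabla_{x}v[\mu](x,t)$, the full sequence converges, and this continuous convergence together with the continuity of $\nabla_{x}v[\mu]$ upgrades to uniform convergence on compact subsets of $\RR^d\times[0,T]$. The delicate point is precisely this stage: the scheme enjoys only the \emph{defected} semiconcavity \eqref{weak_semiconcavity_mfg}, so the $\rho_n^2/\eps_n^2$ defect must be absorbed in the limit; the segment integration produces a logarithmically singular contribution near $s=0$, which is why both the uniform Lipschitz bound and the scaling $\rho_n=o(\eps_n)$ are essential, whereas $\rho_n^2/h_n\to0$ is needed only for the consistency in Stage 1.
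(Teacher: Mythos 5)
Your proposal is correct and follows essentially the same route as the paper: locally uniform convergence of $v^{\rho_n,h_n}[\mu_n]$ via the Barles--Souganidis theory, uniform Lipschitz bounds from \cite[Lemma 3.2(i)]{CS15}, integration of the weak semiconcavity \eqref{weak_semiconcavity_mfg} along a segment split into a near-origin part (handled by the Lipschitz bound) and a far part, identification of every subsequential limit of the gradients via \cite[Proposition 3.3.1 and Proposition 3.1.5(c)]{CannSinesbook}, and the upgrade from continuous to locally uniform convergence. The only (harmless) deviation is your choice of cutoff $s_{0,n}=\rho_n^2/\eps_n^2$, which produces a vanishing $\delta_n|\ln\delta_n|$ term, whereas the paper cuts at $\rho_n/(\eps_n|y-x_n|)$ so that the defect is absorbed into $\tau^2|y-x_n|^2$ and no logarithm appears at all.
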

\begin{proof} The assertion on the convergence of $v^{\rho_n,h_n,\eps_n}[\mu_n]$ is a consequence of the uniform convergence over compact sets of $v^{\rho_n,h_n}[\mu_n]$ to $v[\mu]$ if $\frac{\rho_n^2}{h_n}\to 0$, which is a standard result proved with the theory developed in \cite{MR1115933}  (see e.g. \cite[Theorem 4.2]{MR3042570}). The argument to establish the uniform convergence of $\nabla_{x} v^{\rho_n,h_n,\eps_n}[\mu_n]$   is similar to the proof of \cite[Theorem 3.5]{MR3148086}. Namely, for all $n\in \NN$  and $x_n\to x$ and $t_n\to t$, and $y\neq x$ we have (for $n$ large enough)
$$
v^{\rho_n,h_n,\eps_n}[\mu_n](y,t_n)-v^{\rho_n,h_n,\eps_n}[\mu_n](x_n,t_n)- \nabla_{x} v^{\rho_n,h_n,\eps_n}[\mu_n](x_n,t_n)\cdot(y-x_n) \leq r_{1n} + r_{2n},
$$
where 
$$\ba{l} r_{1,n}:= \int_{0}^{\frac{\rho_n}{\eps_n|y-x_n|}}\left[\nabla_{x} v^{\rho_n,h_n,\eps_n}[\mu_n](x_n+\tau (y-x_n),t_n)-\nabla_{x} v^{\rho_n,h_n,\eps_n}[\mu_n](x_n,t_n)\right]\cdot(y-x_n)\dd \tau, \\[6pt]
 r_{2,n}:= \int_{\frac{\rho_n}{\eps_n|y-x_n|}}^{1}\left[\nabla_{x} v^{\rho_n,h_n,\eps_n}[\mu_n](x_n+\tau (y-x_n),t_n)-\nabla_{x} v^{\rho_n,h_n,\eps_n}[\mu_n](x_n,t_n)\right]\cdot(y-x_n)\dd \tau
\ea
$$
Since $\frac{\rho_n}{\eps_n} \to 0$, the uniform Lipschitz character of $v^{\rho_n,h_n,\eps_n}[\mu_n](\cdot,t)$, for $t\in [0,T]$, implies that $r_{1,n} \to 0$. On the other hand, by \eqref{weak_semiconcavity_mfg}, 
$$r_{2,n}\leq  \int_{\frac{\rho_n}{\eps_n|y-x_n|}}^{1} \frac{c}{\tau}\left(\tau^2|y-x_n|^2+\left(\frac{\rho_n}{\eps_n}\right)^2\right) \dd \tau \leq \int_{0}^{1} \tau \dd \tau \leq \frac{c}{2}|y-x_n|^2.
$$
By the uniform convergence of $v^{\rho_n, h_n,\eps_n}[\mu_n]$, we conclude that any limit point $p$ of $\nabla_{x} v^{\rho_n,h_n,\eps_n}[\mu_n](x_n,t_n)$ (there exists at least one because this sequence is uniformly bounded) must satisfy
$$
v[\mu](y,t) \leq v[\mu](x,t) + p\cdot(y-x)+ \frac{c}{2} |y-x|^2 \hspace{0.4cm} \forall \; y \in \RR^d, \; t\in [0,T],
$$
which implies that $p= \nabla_{x} v[\mu](x,t)$  by \cite[Proposition 3.3.1 and Proposition 3.1.5(c)]{CannSinesbook}. Thus, if for all $i=1, \hdots, d$ we denote by 
$$
b_{i}^{\sup}:= \limsup_{x'\to x, t'\to t, n\to \infty} \partial_{x_i} v^{\rho_n,h_n,\eps_n}[\mu_n](x',t'), \; \; \; b_{i}^{\inf}:= \liminf_{x'\to x, t'\to t, n\to \infty} \partial_{x_i} v^{\rho_n,h_n,\eps_n}[\mu_n](x',t')
$$
we deduce that $b_{i}^{\sup}=b_{i}^{\inf}=\partial_{x_i}v[\mu](x,t)$ and so the local uniform convergence of $\nabla_{x}v^{\rho_n,h_n,\eps_n}[\mu_n](\cdot,\cdot)$ to $\nabla_{x} v[\mu](\cdot,\cdot)$ follows (see e.g. \cite[Chapter V, Lemma 1.9]{BardiCapuzzo96}). 
\end{proof}

Suppose that $\rho_n$, $h_n$ and $\eps_n$ satisfy the conditions in Proposition \ref{uniform_convergence_derivatives_mfg}, denote by $m^{n} \in C([0,T];\P_1(\RR^d))$ the extension to $C([0,T];\P_1(\RR^d))$ of the solution of  \eqref{scheme_nonlinear_stochastic_case_II} computed with coefficients $b^{n}[\mu](x,t):= \nabla_{x} v^{\rho_n,h_n,\eps_n}[\mu](x,t)$ and $\sigma_{\ell}^n= \sigma e_{\ell}$ ($\ell=1,\hdots, d$).  Using \eqref{uniform_Lipschitz_property_v} for $v^{\rho,h,\eps}$, we have the existence of $C>0$, independent of $\mu$, such that $| D^2_{xx} v^{\rho_n,h_n,\eps_n}|_{\infty} \leq C/\eps_n$ (see e.g.  \cite[Section 3]{CS15}). Therefore, if $h_n=o(\eps_n^2)$ we can reproduce the argument in the proof of Theorem \ref{convergencia_under_H} to obtain the following result.
\begin{proposition}\label{convergence_mgf_scheme} Under the above assumptions every limit point $m \in C([0,T]; \P_1(\RR^d))$ of $m^n$ {\rm(}there exists at least one{\rm)} solves \eqref{fpk_mfg_implicit}.
\end{proposition}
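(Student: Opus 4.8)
The plan is to transcribe the proof of Theorem~\ref{convergencia_under_H} almost verbatim, the only structural change being that the regularized coefficients $b_n=\phi_{\eps_n}\ast b$ are replaced by the semi-Lagrangian field $b^n[\mu]$ associated with $v^{\rho_n,h_n,\eps_n}[\mu]$ (and $\sigma^n\equiv\sigma I_d$), and that the two properties of $b_n$ used there — a uniform-in-$n$ bound and a local Lipschitz constant of order $1/\eps_n$ — are now supplied, respectively, by the uniform gradient bound \eqref{uniform_Lipschitz_property_v} for $v^{\rho,h,\eps}$ and by the Hessian estimate $|D^2_{xx}v^{\rho_n,h_n,\eps_n}|_\infty\le C/\eps_n$. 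First I would record that $b^n,\sigma^n$ satisfy {\bf(H)} with constants independent of $n$: the $\mu$-continuity of $b^n$ is inherited from that of $F,G$ through the finite scheme \eqref{semi_lagrangian_scheme_v_mu} (so that Proposition~\ref{existence_by_fixed_point} provides a solution $m^n$), while the linear-growth bound \eqref{linear_growth} holds with the uniform constant coming from \eqref{uniform_Lipschitz_property_v}. Consequently Proposition~\ref{second_moments_bounded} and Proposition~\ref{masmammadapppqpqwppa} apply with $n$-independent constants, Lemma~\ref{compactness_in_P_1} yields relative compactness of $\{m^n\}$, and I extract a subsequence with $m^n\to m$ in $C([0,T];\P_1(\RR^d))$.

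Next I would reproduce the telescoping computation \eqref{sumatelescopica}--\eqref{escriturasimplificada_1}: since it rests only on the boundedness of $b^n,\sigma^n$ and on $\varphi\in C_0^\infty(\RR^d)$ (via the fourth-order Taylor expansion), it gives
\be\label{plan_telescopica}
\int_{\RR^d}\varphi\,\dd m^n(t_{n'})(x)=\int_{\RR^d}\varphi\,\dd m^n(0)(x)+h_n\sum_{k=0}^{n'-1}\int_{\RR^d}L_{b^n,\sigma^n,\varphi}[m^n](x,t_k)\,\dd m^n(t_k)(x)+O\!\left(\tfrac{\rho_n^2}{h_n}+h_n\right).
\ee
The diffusion part of $L_{b^n,\sigma^n,\varphi}$ is $\tfrac{\sigma^2}{2}\Delta\varphi$, hence $n$-independent and Lipschitz, while the drift part $b^n[m^n]\cdot\nabla\varphi$ is Lipschitz in $x$ with constant $O(1/\eps_n)$ by the Hessian bound; so $L_{b^n,\sigma^n,\varphi}[m^n](\cdot,t_k)$ has Lipschitz constant $O(1/\eps_n)$. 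Using the equicontinuity estimate \eqref{equicontinuity_in_d2} together with the duality \eqref{distance_1_difference_lipschitz}, I can exchange $\dd m^n(t_k)$ for the time average $\dd m^n(s)$ over $[t_k,t_{k+1})$ at the cost of $O(\sqrt{h_n}/\eps_n)$ globally — exactly as in \eqref{replacement_integral_L_III} — turning the sum in \eqref{plan_telescopica} into $\int_0^{t_{n'}}\int_{\RR^d}\hat L^n(x,s)\,\dd m^n(s)(x)\,\dd s$, where $\hat L^n(x,s):=L_{b^n,\sigma^n,\varphi}[m^n](x,t_k)$ for $s\in[t_k,t_{k+1})$, with total error $O(\rho_n^2/h_n+\sqrt{h_n}/\eps_n)$.

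It remains to pass to the limit in this integral, which is where the MFG structure departs from Theorem~\ref{convergencia_under_H} and where the main work lies. Here I would invoke Proposition~\ref{uniform_convergence_derivatives_mfg} with $\mu_n=m^n\to m$, which gives $\nabla_x v^{\rho_n,h_n,\eps_n}[m^n]\to\nabla_x v[m]$ uniformly over compact subsets of $\RR^d\times[0,T]$, hence $b^n[m^n]\to b[m]$ with $b$ as in \eqref{definition_b_mfg}. Combining this with the uniform continuity of $\nabla_x v[m]$ on the compact set $\supp\varphi\times[0,T]$ and with $|t_k-s|\le h_n\to0$, I obtain $\hat L^n\to L_{b,\sigma,\varphi}[m]$ uniformly on $\supp\varphi\times[0,T]$; since the $\hat L^n$ are uniformly bounded and $m^n(s)\to m(s)$ in $\P_1(\RR^d)$, Lebesgue's dominated convergence theorem yields $\int_0^{t_{n'}}\int\hat L^n\,\dd m^n(s)\,\dd s\to\int_0^t\int L_{b,\sigma,\varphi}[m]\,\dd m(s)\,\dd s$. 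Finally the error terms vanish because $\rho_n^2/h_n\to0$ and $\sqrt{h_n}/\eps_n\to0$, the latter being precisely the hypothesis $h_n=o(\eps_n^2)$, so \eqref{solution_FP} holds for $m$ and $m$ solves \eqref{fpk_mfg_implicit}. The hard part is exactly the joint convergence $b^n[m^n]\to b[m]$ of an $n$-dependent, semi-Lagrangian, nonlocal-in-time drift evaluated along the moving argument $m^n$; this is what Proposition~\ref{uniform_convergence_derivatives_mfg} is designed to deliver, and it replaces the elementary uniform $\mu$-continuity of the convolution coefficients that sufficed in Theorem~\ref{convergencia_under_H}.
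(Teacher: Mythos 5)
Your proposal is correct and follows essentially the same route the paper intends: the paper's own ``proof'' is precisely the paragraph preceding the proposition, instructing to reproduce the argument of Theorem~\ref{convergencia_under_H} with the uniform gradient bound \eqref{uniform_Lipschitz_property_v} and the Hessian estimate $|D^2_{xx}v^{\rho_n,h_n,\eps_n}|_\infty\le C/\eps_n$ replacing the properties of the mollified coefficients, and with Proposition~\ref{uniform_convergence_derivatives_mfg} (applied along $\mu_n=m^n$) supplying the convergence of the drift. Your only deviation --- keeping the coefficient evaluated at $m^n$ throughout and merging the two replacement steps \eqref{replacement_integral_L_I} and the uniform convergence into a single application of Proposition~\ref{uniform_convergence_derivatives_mfg} --- is a harmless (indeed natural) reorganization, legitimate because the Hessian bound is uniform in $\mu$.
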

\begin{remark}\label{uniqueness_mfg}{\rm(i)} If $F$ and $G$ satisfy the following monotonicity conditions
$$\ba{l}
\int_{\RR^d} \left[ F(x,m_1) - F(x,m_2\right] \dd (m_1-m_2)(x) > 0 \hspace{0.4cm} \forall \; m_1, m_2 \in \P_1(\RR^d), \; m_1 \neq m_2,\\[6pt]
\int_{\RR^d} \left[ G(x,m_1) - G(x,m_2\right] \dd (m_1-m_2)(x) \geq 0 \hspace{0.4cm} \forall \; m_1, m_2 \in \P_1(\RR^d),
\ea
$$
then system \eqref{MFGimplicit} admits a unique solution $(v,m)$ {\rm(}see \cite{LasryLions07}{\rm)}. In this case the entire sequence $m^n$ in Proposition  \ref{convergence_mgf_scheme} converges to $m$. \smallskip\\
{\rm(ii)} In the articles \cite{MR3148086,CS15} a very similar scheme is proposed for degenerate  MFG systems  when  $m_0$ is absolutely continuous, with a compact support and with an essentially bounded density.  In those frameworks, the velocity field $b[\mu](x,t)$ is only defined for a.e. $x\in \RR^d$. Therefore {\rm(}see Remark \ref{remark_approximation_general_coefficients} {\rm (ii))}, the proposed scheme discretizes the density of $m$ for which an $L^\infty$ bound is proved if $d=1$. Moreover,  the authors show the $L^1$ convergence of the approximations of the velocity field, which is weaker than the result in Proposition \ref{uniform_convergence_derivatives_mfg}. On the other hand, when $d=1$, uniform bounds in $L^\infty$ are shown for the approximated densities,  which allows them to prove, in these degenerate  cases, a version of Proposition \ref{convergence_mgf_scheme}  in the one dimensional case. In their entire analysis, the extra assumptions on $m_0$ play an important role. 
\end{remark}
\subsubsection{Numerical test} \label{Testnonlinearimp}
We consider the MFG system \eqref{MFGimplicit} in dimension $d=r=1$ on the  space-time domain $\OO \times [0,T]:=[-3,3]\times [0,5]$,   $\sigma=0.01$ and  with   running  and terminal costs given respectively by
\be\label{eq:FeV2}\ba{c}
F(x,m):=d(x,\mathcal{P})^2V_{\delta}(x,m) \; \; \; \; G(x,m):=F(x,m),
\ea
\ee
where  
$$\ba{c}
V_{\delta}(x,m):= (\phi_{\delta} \ast \left(\phi_{\delta}  \ast m \right))(x)  \; \; \; \mbox{with } \; \; \; \phi_{\delta}(x):= \frac{1}{\delta \sqrt{2\pi}}e^{\frac{-x^2}{2\delta^2}},\ea$$
and $d(\cdot,\mathcal{P})$  denotes the distance to the set $\mathcal{P}:=[-2,-2.5]\cup[1,1.5]$. We choose as initial distribution 
$$\bar{m}_0(x)= \frac{\nu(x)}{ \int_{\OO}\nu(y) \dd y}  \mathbb{I}_{\OO}(x)  \hspace{0.3cm} \mbox{with } \; \;  \nu(x):=e^{-x^2/ 0.2}.$$
By formula \eqref{stochastic_optimal_control_problem_mfg} the interpretation in this setting is that agents want to reach  the meeting areas, defined by the set $\P$, without spending to much effort (modeled by the $|\alpha|^2$ term in \eqref{stochastic_optimal_control_problem_mfg}), and to avoid congestion, modeled by the coupling terms $F$ and $G$. Once the players reach the  meeting areas  they have not incentives to leave and they remain in $\P$.


We heuristically solve  the implicit scheme \eqref{scheme_nonlinear_stochastic_case_II}  using the learning procedure  proposed in \cite{CH17}  (analyzed at the continuous level). More precisely, given the discretization parameters $\rho$, $h$ and $\eps$ and an initial guess $m^0$ for the solution of \eqref{scheme_nonlinear_stochastic_case_II}, we compute $v^0$ by solving backwards \eqref{semi_lagrangian_scheme_v_mu} with $\mu=m^0$. The new iterate $m^1$ is computed using scheme \eqref{scheme_linear_stochastic_case} with 
$$
\Phi^{\pm}_{j,k}= x_{j}-h \tilde{\nabla} (v^0)^\eps_{j,k} \pm \sqrt{h}\sigma,
$$
where $\tilde{\nabla}  (v^0)^\eps_{j,k}$ is an approximation of $\nabla_{x} v^{\rho,h,\eps}[m^0](x_j,t_k)$. Then, given $m^p$ ($p\geq 1$) we compute $v^p$ by solving backwards \eqref{semi_lagrangian_scheme_v_mu} with $\mu= \frac{1}{p+1} \sum_{p'=0}^p m^{p'}$ and define $m^{p+1}$  using \eqref{scheme_linear_stochastic_case} with 
$$
\Phi^{\pm}_{j,k}= x_{j}-h \tilde{\nabla} (v^p)^\eps_{j,k} \pm \sqrt{h}\sigma,
$$
where 
$\tilde{\nabla}  (v^p)^\eps_{j,k}$ is an approximation of $\nabla_{x} v^{\rho,h,\eps}[m^p](x_j,t_k)$. We continue with these iterations until the difference between $m^p$ and $m^{p+1}$  is less than $0.01$ in the discrete infinity norm.

\begin{remark} Numerically, this heuristic performs rather well. The proof of convergence of this algorithm is not analyzed in this paper and it is postponed to a future work. One could expect that the arguments in \cite{CH17} apply to a discrete time, discrete space MFG {\rm(}see \cite{gomes10}{\rm)}. The main issue with the approximation   \eqref{scheme_nonlinear_stochastic_case_II} is that it does not correspond exactly to a discrete MFG because the distribution of the players does not evolve according to  the discrete optimal controls of the typical players {\rm(}computed as the optimizers of the r.h.s. of \eqref{semi_lagrangian_scheme_v_mu}{\rm)}, but with they evolve according to their approximations  $\nabla_{x} v^{\rho,h,\eps}[m^p](x_j,t_k)$. 
\end{remark}

The numerical approximation of the density ${\bf{m}}^{\rho,h,\eps}$ for $\rho=0.02$,  $h=\rho$, $\eps=0.15$ and  $\delta=0.02$  is depicted in  Figure \ref{Testimplicit}. 
In  Figure \ref{Testimplicit2d}, we plot the densities ${\bf{m}}^{\rho,h,\eps}$ at times $t=0$, $0.6$ and $5$. 
We observe that the density of agents  divides into three groups. The largest  one moves towards the right  meeting area which is the closest one. The second largest group moves  towards  the left area. The third and smallest group waits before moving towards the meeting area.  We note that in  this equilibrium,  the agents somehow take rational decisions based on their aversion to crowed places out of the meeting zones.
\begin{figure}[htp]
\begin{center}
       \includegraphics[width=3in]{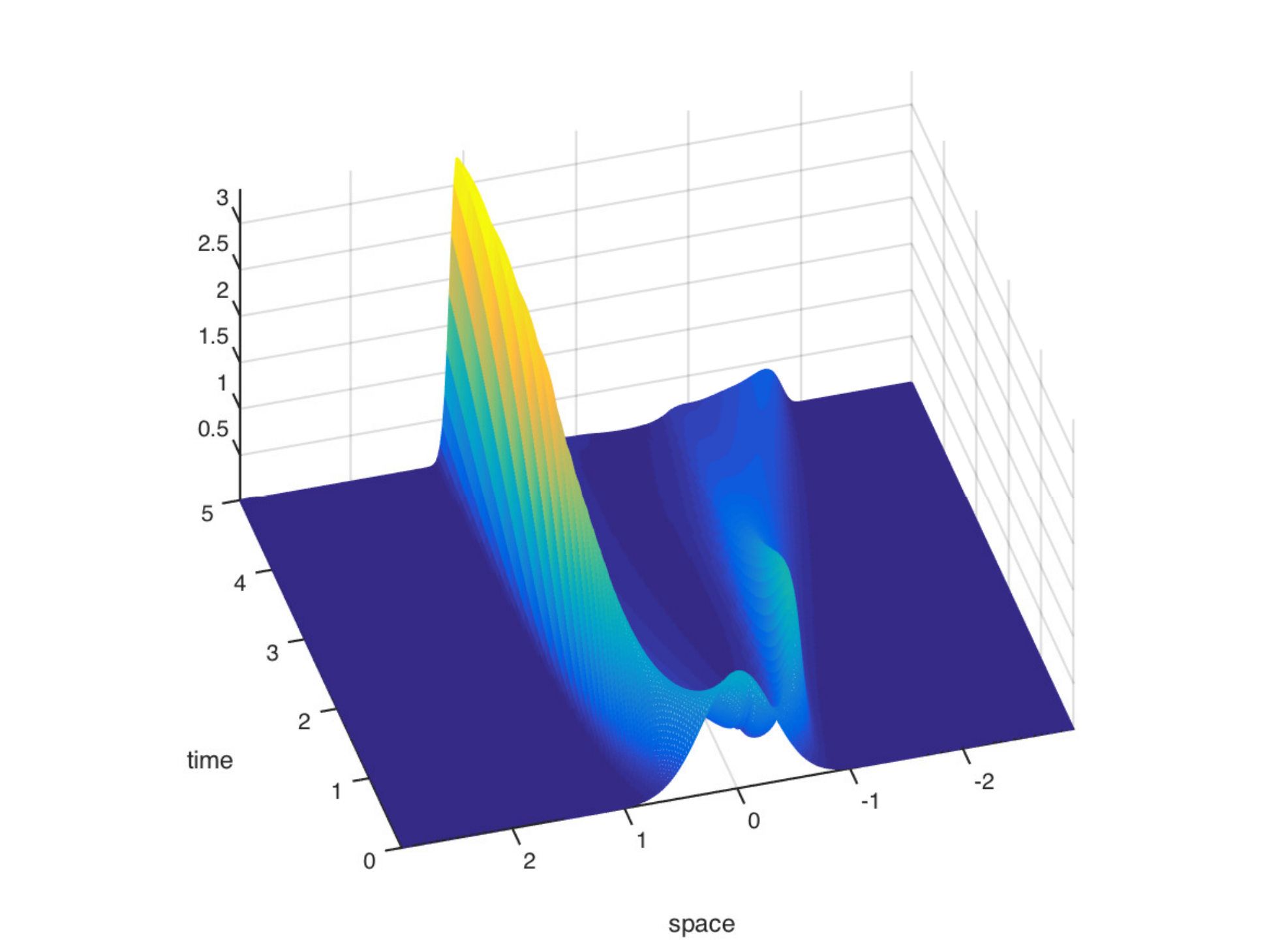}
        \includegraphics[width=3in]{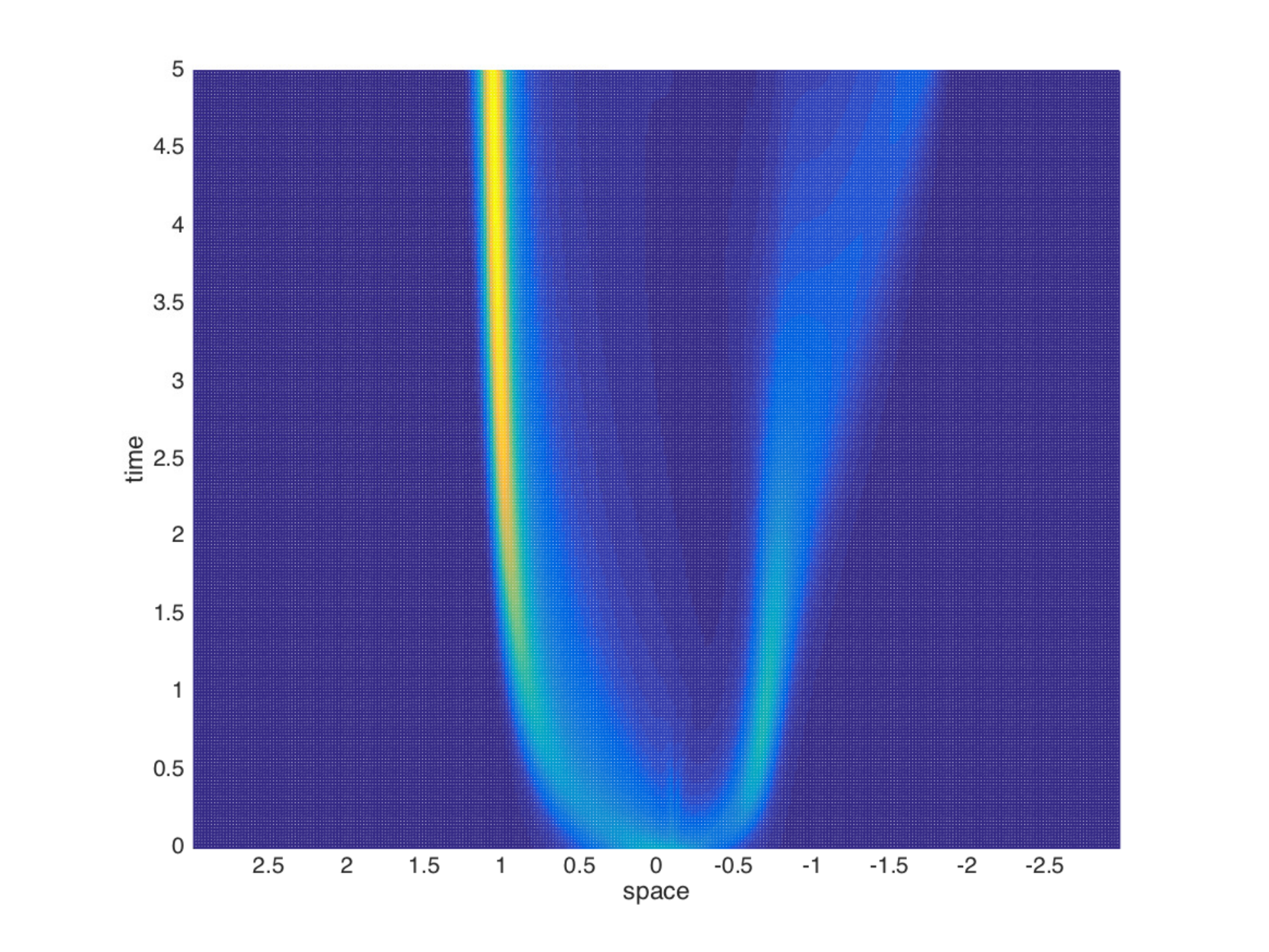}
\caption{Test \ref{Testnonlinearimp}:  3D and 2D view in the $(x,t)$ domain of the evolution of the density of agents.
\label{Testimplicit}}
\end{center}
\end{figure}
\begin{figure}[htp]
\begin{center}
 \includegraphics[width=3in]{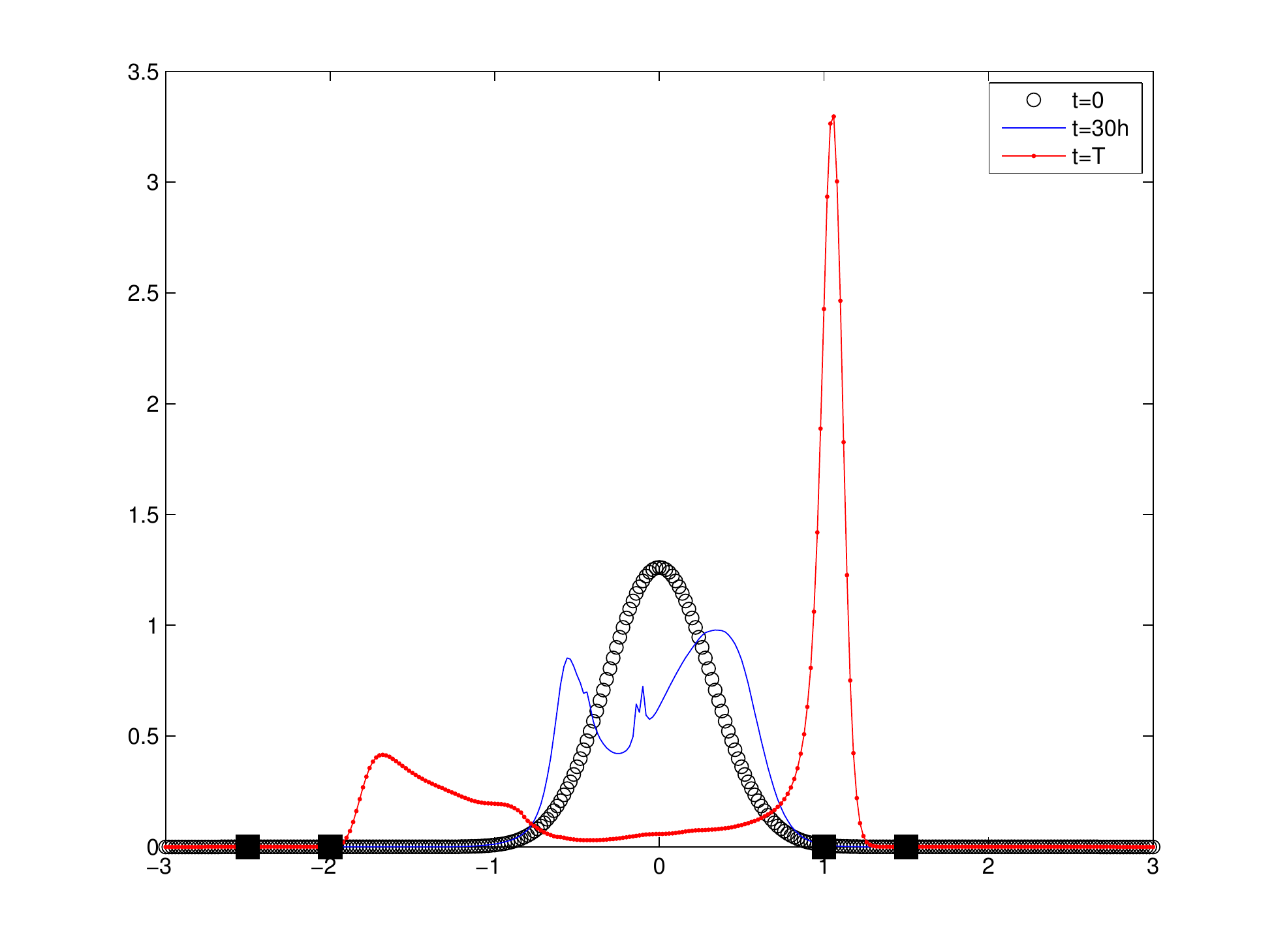}
\caption{ Test \ref{Testnonlinearimp}: Densities at times $t=0$, $0.6$ and $5$ (black squares on the $x$ axis  represent the boundary of  the ``meeting areas'').
\label{Testimplicit2d}}
\end{center}
\end{figure}
\subsection{A non-linear Hughes type  explicit model } In this section we consider the FPK equation
\be\label{fpk_hughes_explicit}
\partial_{t} m  -\frac{\sigma^2}{2}\Delta m   -\mbox{div} \big( \nabla v[m] m \big)  = 0 \; \; \; \mbox{in } \; \RR^d\times (0,T), \hspace{0.3cm} m(0)=\bar{m}_0,
\ee
where $v: C([0,T];\P_{1}(\RR^d)) \times \RR^d \times [0,T] \to \RR$ is given by 
\be\label{definition_b_explicit_hughes}
v[m](x,t):= \inf_{\alpha} \EE\left(\int_{t}^{T} \left[\half |\alpha(s)|^2 + F(X^{x,t,\alpha}(s), m(t)) \right] \dd s + G(X^{x,t,\alpha}(T),m(t))\right),
\ee 
and  the processes $\alpha$ and $X^{x,t,\alpha}$ are as in Section \ref{subsection_mfg}. We also assume that $F$ and $G$ satisfy \eqref{assumptions_F_G}. 

Note that the main difference with the MFG model considered in Section  \ref{subsection_mfg} is that the optimal control problem solved by an agent located at point $x$ at time $t$ depends on the global distribution $m$ of the agents {\it only} through its value at time $t$. In this sense, agents do not forecast, or in other words, no learning procedure has been adopted by the population of agents regarding their future behavior (see \cite{CH17} for the analysis of the {\it fictitious play} procedure in MFGs which can explain the formation of the equilibria).  This model is a variation of the one introduced by Hughes in \cite{di2011hughes} where the optimal control problem solved by the typical player is stationary of minimum time type.  In terms of PDEs, at each time $t\in (0,T)$ we consider the HJB equation 
\be\ba{rcl}\label{HJBmu_explicit} 
-\partial_{s} u(x,s)  -  \frac{\sigma^2}{2} \Delta u(x,s)    + \frac{1}{2}|\nabla u(x,s) |^2  &=& F(x, m(t)) \;  \;    \hbox{in } \RR^{d}\times (t,T), \\[6pt]
v(x,T)&=& G(x, m(t))\; \;   \mbox{for } x \in \RR^{d},
\ea\ee
which admits a classical solution $u[m(t)]$. We have that $v[m](x,t)= u[m(t)](x,t)$. By the continuity of $F$ and $G$, assumption \eqref{assumptions_F_G} and the representation formula \eqref{definition_b_explicit_hughes},  we have that $v$ is continuous. This can also be seen as a consequence of the stability of viscosity solutions with respect to continuous parameter perturbations (for equation \eqref{HJBmu_explicit} the parameter is $m(t)$). Moreover, as in the case of MFG,  assumption  \eqref{assumptions_F_G} implies that
\be\label{uniform_Lipschitz_property_v_hughes}\sup_{ t\in [0,T], \; m \in C([0,T];\P_{1}(\RR^d)) }   \|\nabla_{x}v[m](\cdot,t)\|_{\infty}  < \infty,\ee
and that for all $t\in [0,T]$,  $v[m](\cdot, t)$ is semiconcave, with a semiconcavity constant which is independent of $(m,t)$. Using this property and arguing exactly as in Section \ref{subsection_mfg} we obtain that $(m,x,t) \in C([0,T]; \P_1(\RR^d)) \times \RR^d \times [0,T] \to \nabla_{x}v[m](x,t) \in \RR^{d}$ is continuous and so Theorem \ref{convergencia_1} gives the following result.
\begin{proposition} Equation \eqref{fpk_hughes_explicit} admits at least one solution. 
\end{proposition}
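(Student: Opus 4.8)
The plan is to recognize \eqref{fpk_hughes_explicit} as a particular instance of $(FPK)$ and to deduce existence directly from Theorem \ref{convergencia_1}. Concretely, I would take $r=d$, let the diffusion matrix be the constant $\sigma_{ij}[m](x,t)=\sigma\,\delta_{ij}$, and define the drift by $b[m](x,t):=-\nabla_{x}v[m](x,t)$, with $v[m]$ given by the control representation \eqref{definition_b_explicit_hughes}. With these choices $a_{ij}[m]=\sigma^{2}\delta_{ij}$ and the measure equation $(FPK)$ reduces exactly to \eqref{fpk_hughes_explicit}, so it suffices to check that $b$ and $\sigma$ satisfy ${\bf(H)}$ and ${\bf(Lip)}$. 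The conclusion then follows by fixing discretization parameters with $\rho_n^2=o(h_n)$ and invoking Theorem \ref{convergencia_1}, which guarantees both the existence of a limit point of the scheme solutions (which exist by Proposition \ref{existence_by_fixed_point}) and the fact that any such limit solves $(FPK)$.

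First I would verify ${\bf(H)}$. The coefficient $\sigma$ is constant, hence continuous and trivially of linear growth. For $b$, the continuity of $(m,x,t)\mapsto\nabla_{x}v[m](x,t)$ has already been established above, arguing as in Section \ref{subsection_mfg} from the uniform semiconcavity of $v[m](\cdot,t)$ together with \cite[Proposition 3.3.1 and Proposition 3.1.5(c)]{CannSinesbook}; this gives ${\bf(H)}{\rm(i)}$. The bound \eqref{uniform_Lipschitz_property_v_hughes} shows that $b$ is bounded uniformly in $(m,t)$, so in particular the linear growth condition \eqref{linear_growth} holds, yielding ${\bf(H)}{\rm(ii)}$.

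The main point, and the step I expect to require the most care, is the verification of ${\bf(Lip)}$, namely the \emph{two-sided} local Lipschitz continuity of $b[m](\cdot,t)=-\nabla_{x}v[m](\cdot,t)$ in $x$, uniformly in $(m,t)$. Uniform semiconcavity only provides a one-sided control on $\nabla_{x}v$ and so is not by itself enough to obtain ${\bf(Lip)}$; the extra ingredient is parabolic regularity. Here I would use the identity $v[m](x,t)=u[m(t)](x,t)$, where $u[m(t)]$ is the classical solution of the uniformly parabolic HJB equation \eqref{HJBmu_explicit} (uniform parabolicity being ensured by $\sigma\neq0$), whose frozen data $F(\cdot,m(t))$ and $G(\cdot,m(t))$ are bounded in $C^{2}$ uniformly in $(m,t)$ by \eqref{assumptions_F_G}. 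Exactly as in the MFG case of Section \ref{subsection_mfg}, standard interior estimates for such parabolic equations then deliver a local Lipschitz bound on $\nabla_{x}v[m](\cdot,t)$ with constant independent of $m$ and $t$, which is precisely \eqref{local_Lipschitz_inequality}.

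With ${\bf(H)}$ and ${\bf(Lip)}$ in hand, the argument closes in one line: Theorem \ref{convergencia_1} applies to the present coefficients, so the family $m^n$ of time-interpolated solutions of the scheme admits at least one limit point $m\in C([0,T];\P_1(\RR^d))$, and every such limit point solves $(FPK)$, that is, \eqref{fpk_hughes_explicit}. This establishes the existence of a solution.
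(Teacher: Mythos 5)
Your proposal is correct and follows essentially the same route as the paper: the paper likewise verifies ${\bf(H)}$ via the uniform bound \eqref{uniform_Lipschitz_property_v_hughes} and the semiconcavity-based continuity of $\nabla_{x}v[m](x,t)$ in $(m,x,t)$ (arguing as in Section \ref{subsection_mfg}), relies on the same parabolic-regularity argument as in the MFG case for ${\bf(Lip)}$, and then invokes Theorem \ref{convergencia_1}. If anything, you spell out the verification of ${\bf(Lip)}$ more explicitly than the paper, which handles it only implicitly through its reference to "arguing exactly as in Section \ref{subsection_mfg}".
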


As in the case of MFGs, in practice we do not known explicitly the velocity vector field $-\nabla_xv[m](x,t)$ and so we have to approximate it.  We consider the following approximation: given $\rho>0$, $h=T/N>0$, with $N\in \mathbb{N}$,  $\mu \in C([0,T];\P_1(\RR^d))$ and $k=0,\hdots, N-1$, we define\small
\be\label{semi_lagrangian_scheme_v_mu_hughes_case}
\ba{rcl} 
v^{\rho,h}_{i,k'}[\mu(t_k)]&=& \inf_{\alpha \in \RR^d} \left\{ \frac{h}{2}|\alpha|^{2} + \frac{1}{2d}\sum_{\ell=1}^{d}\left(I[v^{\rho,h}_{\cdot,k'+1}[\mu(t_k)]](x_{i} + h \alpha +\sigma \sqrt{hd}e_{\ell})\right. \right.\\[6pt]
\; &\; &\left.\left. \hspace{4cm} +I[v^{\rho,h}_{\cdot,k'+1}[\mu(t_k)]](x_{i} + h \alpha-\sigma \sqrt{hd}e_{\ell}) \right)\right\}\\[6pt]
\; & \; & \hspace{4cm}+ hF(x_i,\mu(t_k)) \hspace{0.4cm} \forall \; k'=k, \hdots, N-1,\\[6pt]
v^{\rho,h}_{i,N}[\mu(t_k)]&=& G(x_i,\mu(t_k)).
\ea
\ee \normalsize
We also define $v^{\rho,h}: C([0,T]; \P_{1}(\RR^d)) \times \RR^d \times [0,T] \to \RR$ by  
$$
v^{\rho,h}[\mu](x,t)=  I[v^{\rho,h}_{\cdot,k}[\mu(t_k)]](x,t_k) \hspace{0.5cm} \mbox{if } \; t\in [t_{k}, t_{k+1}[.
$$
Comparing with  \eqref{semi_lagrangian_scheme_v_mu}, where given $\mu \in C([0,T]; \P_1(\RR^d))$ the scheme discretizes only equation \eqref{HJBmu} (with $m$ replaced by $\mu$),  \eqref{semi_lagrangian_scheme_v_mu_hughes_case} discretizes the PDEs \eqref{HJBmu_explicit}  for each $t=t_k$ ($k=0,\hdots, N-1$). As in the case of MFGs, given $\eps>0$ and $\phi \in C^{\infty}(\RR^d)$, non-negative and such that $\int_{\RR^{d}} \phi(x)\dd x=1$, we define $v^{\rho,h,\eps}: C([0,T]; \P_{1}(\RR^d)) \times \RR^d \times [0,T] \to \RR$ by 
$$
v^{\rho,h,\eps}[\mu](\cdot, t):= \phi_{\eps} \ast v^{\rho,h}[\mu](\cdot,t) \hspace{0.3cm} \forall \; t\in [0,T],
$$
where $\phi_\eps(x):= \frac{1}{\eps^d} \phi(x/\eps)$.  By assumption \eqref{assumptions_F_G},  the bound \eqref{uniform_Lipschitz_property_v} and the semiconcavity property \eqref{weak_semiconcavity_mfg} remain valid in this context. Now, let  $\rho_n$, $h_n$ and $\eps_n$ satisfy the conditions in Proposition \ref{uniform_convergence_derivatives_mfg} and let $m^{n} \in C([0,T];\P_1(\RR^d))$  be the extension to $C([0,T];\P_1(\RR^d))$ of the solution to  \eqref{scheme_nonlinear_stochastic_case_II} computed with coefficients $b^{n}[\mu](x,t):= \nabla_{x} v^{\rho_n,h_n,\eps_n}[\mu](x,t)$ and $\sigma_{\ell}^n= \sigma e_{\ell}$ ($\ell=1,\hdots, d$).  As before, using that $\nabla_{x} v^{\rho_n,h_n,\eps_n}[m_n](\cdot,t)$ is uniformly bounded in $t$ and $n$, we have that $m^n$ has at least one limit point $m\in C([0,T];\P_1(\RR^d))$. Moreover, reasoning as in the proof of \cite[Theorem 3.3]{MR3148086}, for each fixed $t\in [0,T]$ we have that $v^{\rho_n,h_n,\eps_n}[m_n](\cdot, t)\to u[m(t)](\cdot, t)=v[m](\cdot, t) $ and so, by  \eqref{weak_semiconcavity_mfg} and the proof of Proposition \ref{uniform_convergence_derivatives_mfg}, we have that $\nabla_x v^{\rho_n,h_n,\eps_n}[m_n](\cdot, t)\to \nabla_xv[m](\cdot, t)$ uniformly on compact sets of $\RR^d$.  As in the case of MFGs, we have the existence of  a constant $C>0$, independent of $\mu$, such that $| D^2_{xx} v^{\rho_n,h_n,\eps_n}|_{\infty} \leq C/\eps_n$. Therefore,    we can argue exactly as in the proof  of Theorem \ref{convergencia_under_H} to obtain the following result.
\begin{proposition} Assume that $\rho_n^2=o(h_n)$ and $h_n=o(\eps_n^2) $. Then,  every limit point $m \in C([0,T]; \P_1(\RR^d))$ of $m^n$ {\rm(}there exists at least one{\rm)} solves \eqref{fpk_hughes_explicit}.

\end{proposition}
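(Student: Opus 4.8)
The plan is to follow closely the blueprint of the proof of Theorem \ref{convergencia_under_H}, with the mollified coefficients $b_n,\sigma_n$ there replaced by the computable velocity fields $b^n[\mu]=\nabla_x v^{\rho_n,h_n,\eps_n}[\mu]$ and $\sigma^n_\ell=\sigma e_\ell$. First I would record that limit points exist. Since the uniform Lipschitz bound \eqref{uniform_Lipschitz_property_v} remains valid for $v^{\rho,h,\eps}$ in the present setting (uniformly in $(\rho,h,\eps,\mu,t)$), the fields $b^n$ are uniformly bounded and hence satisfy the linear growth condition in \textbf{(H)}{\rm(ii)} with a constant independent of $n$. Consequently Proposition \ref{second_moments_bounded} and Proposition \ref{masmammadapppqpqwppa} apply with constants independent of $n$, and Lemma \ref{compactness_in_P_1} gives that $\{m^n\}$ is relatively compact in $C([0,T];\P_1(\RR^d))$. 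I then fix a subsequence, still denoted $m^n$, converging to a limit point $m$, and aim to show that $m$ satisfies \eqref{solution_FP}.

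The crucial ingredient, which I expect to be the main obstacle, is the local uniform convergence of the approximate velocity field along this subsequence, namely $\nabla_x v^{\rho_n,h_n,\eps_n}[m^n](\cdot,t)\to\nabla_x v[m](\cdot,t)$ uniformly on compact subsets of $\RR^d$, for every $t$. The difference with Proposition \ref{uniform_convergence_derivatives_mfg} is that here $v[m](\cdot,t)$ depends on $m$ only through its current value $m(t)$: for each frozen $t=t_k$, scheme \eqref{semi_lagrangian_scheme_v_mu_hughes_case} is a monotone and consistent discretization of the HJB equation \eqref{HJBmu_explicit} with frozen measure $m(t_k)$. I would first invoke the standard convergence theory for such schemes, reasoning as in \cite[Theorem 3.3]{MR3148086}, together with $\rho_n^2/h_n\to0$ and the stability of $u[\cdot]$ under the convergence $m^n(t)\to m(t)$ in $\P_1(\RR^d)$, to obtain $v^{\rho_n,h_n}[m^n](\cdot,t)\to u[m(t)](\cdot,t)=v[m](\cdot,t)$ uniformly on compacts; since $\rho_n=o(\eps_n)$, the mollification preserves this, giving $v^{\rho_n,h_n,\eps_n}[m^n]\to v[m]$. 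To upgrade $C^0$ convergence of the values to $C^0$ convergence of the gradients I would reproduce verbatim the argument of Proposition \ref{uniform_convergence_derivatives_mfg}: the weak semiconcavity \eqref{weak_semiconcavity_mfg}, which remains valid here, together with the uniform Lipschitz bound, forces any limit point $p$ of $\nabla_x v^{\rho_n,h_n,\eps_n}[m^n](x_n,t_n)$ to satisfy the semiconcavity inequality characterizing $\nabla_x v[m](x,t)$, whence $p=\nabla_x v[m](x,t)$ by \cite[Proposition 3.3.1 and Proposition 3.1.5(c)]{CannSinesbook}, and the local uniform convergence of $b^n[m^n]$ to $b[m]$ follows.

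With the velocity fields under control, I would pass to the limit in the weak formulation exactly as in the proof of Theorem \ref{convergencia_under_H}. Writing the telescoping identity for $\int_{\RR^d}\varphi\,\dd m^n(t_{n'})$ with $\varphi\in C_0^\infty(\RR^d)$, and using the consistency of the underlying Markov chain (Lemma \ref{consistenciacadena}) together with a fourth-order Taylor expansion of $\varphi$, I obtain the discrete identity \eqref{escriturasimplificada_1} with generator $L_{b^n,\sigma,\varphi}[m^n]$ and remainder $O(\rho_n^2/h_n+h_n)$. I then replace $L_{b^n,\sigma,\varphi}[m^n]$ by $L_{b,\sigma,\varphi}[m]$ in two stages: the local uniform convergence of the previous paragraph handles the replacement of the coefficients on the compact support of $\nabla\varphi$ and $D^2\varphi$, while the time-discretization error, as in \eqref{replacement_integral_L_III}, is controlled by the bound $|D^2_{xx}v^{\rho_n,h_n,\eps_n}|_\infty\le C/\eps_n$ combined with the $d_1$-equicontinuity \eqref{equicontinuity_in_d2}, yielding an error of order $\sqrt{h_n}/\eps_n$. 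The three error contributions $\rho_n^2/h_n$, $\sqrt{h_n}/\eps_n$ and $\bar{\omega}_1(\sup_{t}d_1(m^n(t),m(t)))$ vanish precisely under the hypotheses $\rho_n^2=o(h_n)$ and $h_n=o(\eps_n^2)$. A final application of Lebesgue's dominated convergence theorem, as in Theorem \ref{convergencia_under_H}, shows that the right-hand side converges to $\int_0^t\int_{\RR^d}L_{b,\sigma,\varphi}[m](x,s)\,\dd m(s)(x)\,\dd s$, while $m^n(t_{n'})\to m(t)$ and $m^n(0)\to\bar{m}_0$, so that $m$ satisfies \eqref{solution_FP}, i.e.\ $m$ solves \eqref{fpk_hughes_explicit}.
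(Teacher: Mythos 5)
Your proposal is correct and follows essentially the same route as the paper: uniform boundedness of $b^n[\mu]=\nabla_x v^{\rho_n,h_n,\eps_n}[\mu]$ giving compactness, value-function convergence for each frozen $t$ as in \cite[Theorem 3.3]{MR3148086} upgraded to locally uniform gradient convergence via the weak semiconcavity \eqref{weak_semiconcavity_mfg} and the argument of Proposition \ref{uniform_convergence_derivatives_mfg}, and then the passage to the limit in the weak formulation exactly as in Theorem \ref{convergencia_under_H} using the bound $|D^2_{xx}v^{\rho_n,h_n,\eps_n}|_\infty\le C/\eps_n$. The only difference is that you spell out the error bookkeeping that the paper compresses into ``argue exactly as in the proof of Theorem \ref{convergencia_under_H}''; the content is the same.
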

\subsubsection{ Numerical test } \label{testexplicitmodel}
For the sake of comparison,  we consider here the same framework than the one  in Subsection \ref{Testnonlinearimp}, i.e. we take $d=r=1$, we work on the domain $\OO\times [0,T]=[-3,3]\times [0,5]$ and we impose an homogenous Neumann boundary condition on the FPK equation \eqref{fpk_hughes_explicit}. The functions $F$ and $G$ are also as in the previous test, as well as the initial distribution $\bar{m}_0$ of the agents.

We  proceed iteratively in the following way: given the discrete measure $m_{k}^{\rho,h,\eps}$  at time $t_k$ ($k=0,\hdots, N-1$), we  compute  at each space grid point $j$ the discrete value function $v_{j,k}$ by using \eqref{semi_lagrangian_scheme_v_mu_hughes_case} with $\mu(t_k)$ replaced by $m_{k}^{\rho,h,\eps}$. We  regularize the interpolated function $I[v_{\cdot,k}]$ by using   a discrete  space convolution with a mollifier 
$\phi_{\eps}$.  We denote by  $ \tilde{\nabla} v^\eps_{j,k} $  the approximation of its spatial gradient at $x_j$.  Then we calculate $m_{k+1}^{\rho,h,\eps}$  with scheme \eqref{scheme_nonlinear_stochastic_case_II} by approximating the discrete trajectories by 
$$
\Phi^{\pm}_{j,k}= x_{j}-h \tilde{\nabla} v^\eps_{j,k}\pm \sqrt{h}\sigma,
$$
and we iterate the process until $k=N-1$.  Note that, by construction, the scheme is explicit in time. 

 The  approximation of the density  evolution in the $(x,t)$ domain,  computed with $\rho=0.02$, $h=\rho$, $\eps=0.15$ and $\delta=0.01$,  is shown in  Figure \ref{Testexplicit}.  In  Figure \ref{Testexplicit2d}, we plot the approximated density at times $t=0$, $0.6$ and $5$. 
 We observe that the initial density $\bar{m}_0$  divides into two parts. The first one quickly reaches the meeting  area on the right   and once there it stops and begins to accumulate in this zone. The  second part of the density moves in the opposite direction trying to reach the left  meeting area.  In contrast to the presented MFG model, in this model the agents make their decisions based only in the current global configuration. As a consequence,  we observe faster and higher accumulation of agents in the meeting zones.
  
  
%
    
\begin{figure}[htp]
\begin{center}
 \includegraphics[width=3in]{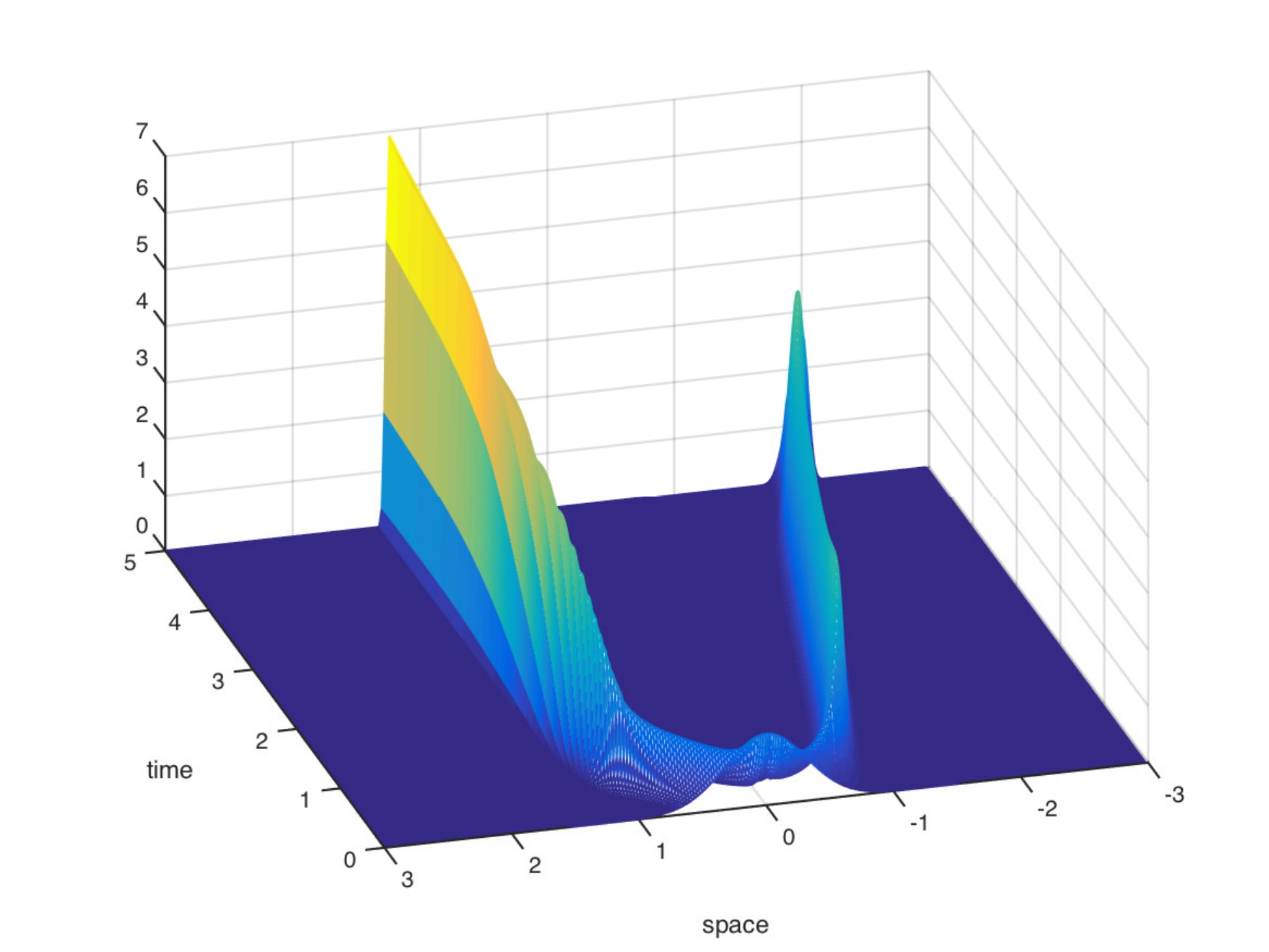}
   \includegraphics[width=3in]{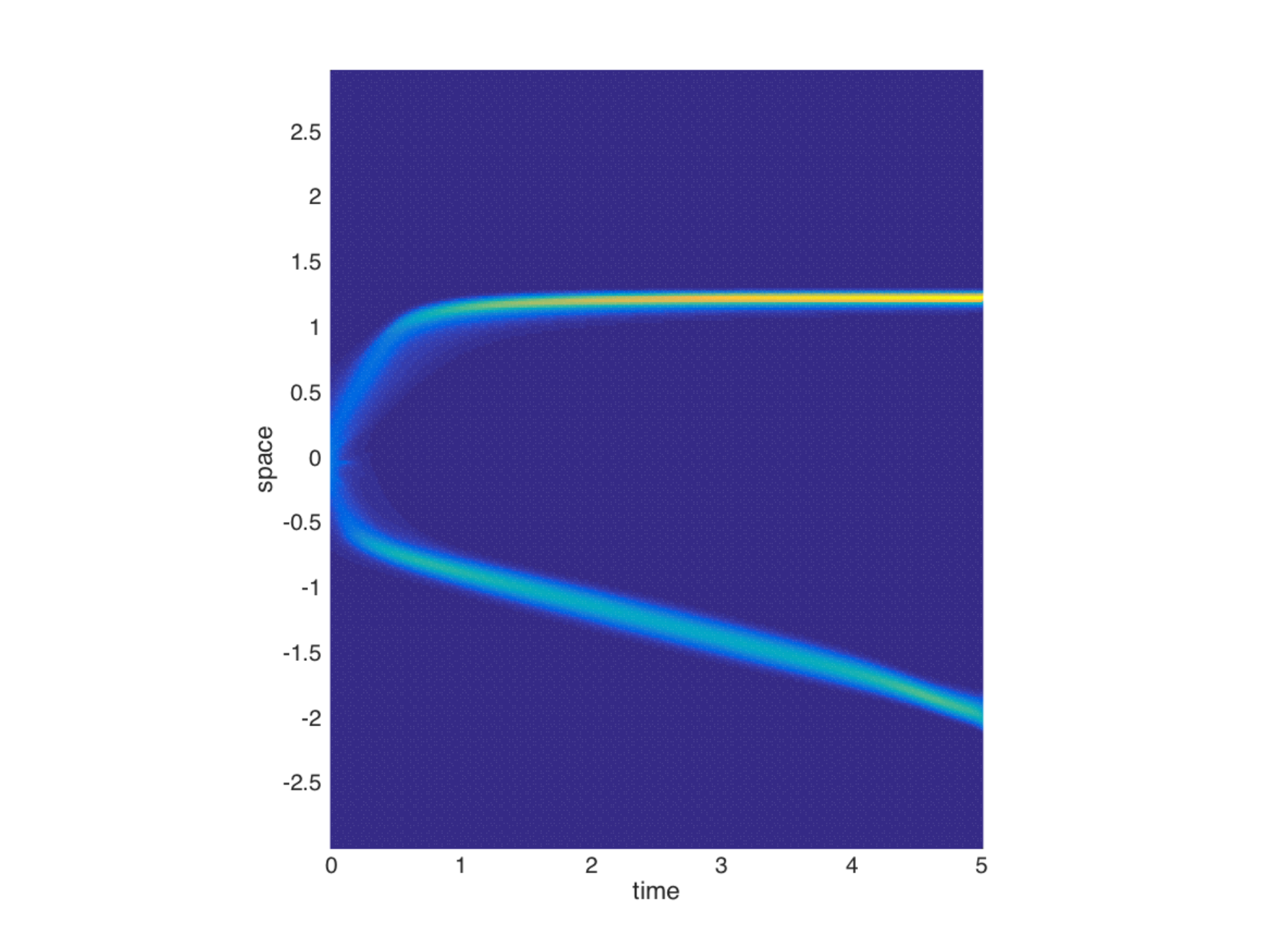}
\caption{Test \ref{testexplicitmodel}:  3D and 2D view in the $(x,t)$ domain of the evolution of the density of agents.
\label{Testexplicit}}
\end{center}
\end{figure}
\begin{figure}[htp]
\begin{center}
  \includegraphics[width=3in]{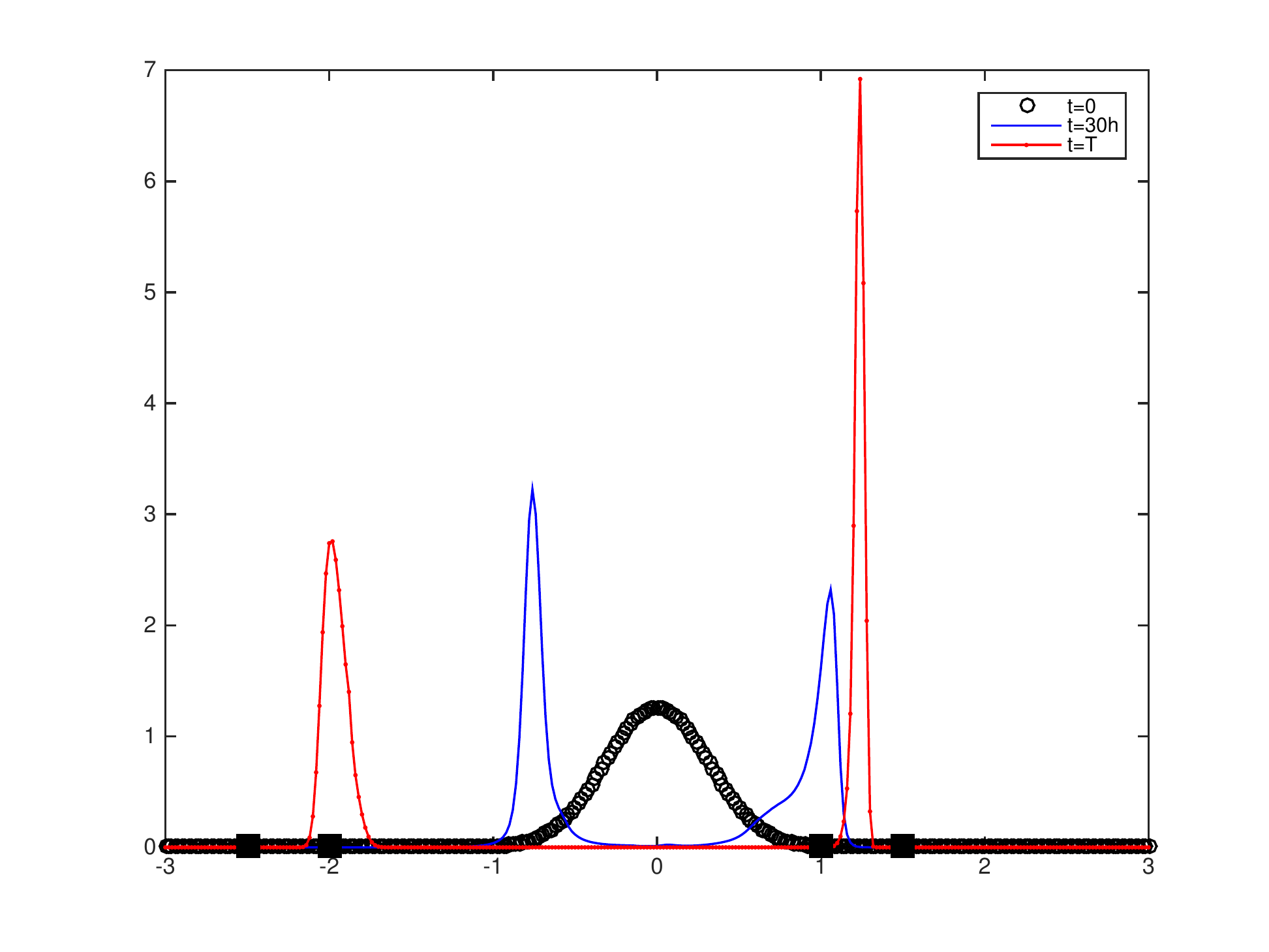}
\caption{ Test \ref{testexplicitmodel}: density  of agents at times $t=0$, $0.6$ and $5$ (black squares on the $x$ axis represents the boundary of  the `meeting areas').}
\label{Testexplicit2d}
\end{center}
\end{figure}

%
%
%
\bibliographystyle{plain}
\bibliography{bibFP}
\end{document}